\documentclass[a4paper,11pt]{article}

\usepackage[T1]{fontenc}
\usepackage{lmodern}
\usepackage{eurosym}
\usepackage{lastpage}
\usepackage{xspace}
\usepackage{fancyhdr}
\usepackage{booktabs}
\usepackage{graphicx}
\usepackage{multirow}
\usepackage{array}
\usepackage{xcolor}
\usepackage{csquotes}
\usepackage{titlesec}
\usepackage{hyperref}
\usepackage{comment}
\usepackage{amsthm}
\usepackage{amsmath}
\usepackage{amssymb}
\usepackage{pifont}
\usepackage{tabto}
\usepackage{enumitem}
\usepackage{makecell}
\usepackage{graphicx}

\titleformat*{\section}{\large\bfseries}
\titleformat*{\subsection}{\normalsize\bfseries}
\titleformat*{\subsubsection}{\normalsize\bfseries}

\renewcommand{\footnotesize}{\fontsize{8bp}{1em}\selectfont}

\newtheorem{theorem}{Theorem}

\newtheorem{proposition}[theorem]{Proposition}

\newtheorem{definition}[theorem]{Definition}
\newtheorem{example}[theorem]{Example}
\newtheorem{remark}[theorem]{Remark}
\newtheorem{corollary}[theorem]{Corollary}

\newtheorem{problem}[theorem]{Problem}
\usepackage{authblk}

\usepackage[margin=15mm,includehead,includefoot]{geometry}

\begin{document}

\title{\bf Detection of coordinated fleet vehicles in route choice urban games. Part I. Inverse fleet assignment theory.}
\author[1]{Grzegorz Jamr\'oz$^*$}
\author[1]{Rafa{\l}  Kucharski}
\affil[1]{Faculty of Mathematics and Computer Science, Jagiellonian University, Kraków, Poland}
\affil[*]{Corresponding author, e-mail: grzegorz.jamroz@uj.edu.pl}
\setcounter{Maxaffil}{0}
\renewcommand\Affilfont{\itshape\small}
\maketitle

\noindent {\bf Abstract:} Detection of collectively routing fleets of vehicles in future urban systems may become important for the management of traffic, as such routing may destabilize urban networks leading to deterioration of driving conditions. Accordingly, in this paper we discuss the question whether it is possible to determine the flow of fleet vehicles on all routes given the fleet size and behaviour as well as the combined total flow of fleet and non-fleet vehicles on every route. We prove that the answer to this Inverse Fleet Assignment Problem is 'yes' for myopic fleet strategies which are more 'selfish' than 'altruistic', and 'no' otherwise, under mild assumptions on route/link performance functions. To reach these conclusions we introduce the forward fleet assignment operator and study its properties, proving that it is invertible for 'bad' objectives of fleet controllers.  We also discuss the challenges of implementing myopic fleet routing in the real world and compare it to Stackelberg and Nash routing. Finally, we show that optimal Stackelberg fleet routing could involve highly variable mixed strategies in some scenarios, which would likely cause chaos in the traffic network. 
\,\\
{\bf Keywords:} coordinated fleets of vehicles, collective routing, inverse problems, detection, route assignment, autonomous vehicles, Stackelberg and Nash equilibrium

\section{Introduction}

Year 2025: Technological advances of recent years have enabled the introduction of self-driving cars \cite{Maurer} as well as communication technologies such as vehicle-to-vehicle, V2V and vehicle-to-infrastructure, V2I, see \cite{V2X} for a survey. Many cities across the world \cite{YourCity, EU, China} have started adopting these technologies and it seems inevitable that the share of connected and autonomous vehicles (CAVs) will increase, making the daily commute easier and more comfortable for all the drivers. Year 2045: As the time has been passing by, some cities have started discovering that even though the technology has enabled autonomous driving indistinguishable from human driving and the road infrastructure and number of vehicles is unchanged, the driving conditions have deteriorated. What has gone wrong? Is this a possible future scenario? Can it be prevented?

In \cite{JamrozSciRep} it was shown that coordinated routing strategies applied by a fleet of CAVs, which otherwise drive exactly as human drivers, may, for certain plausible myopic fleet objectives, result in deterioration of driving conditions across the system or for the human drivers (HDVs), while the travel times of CAVs are improved. Looking for ways to address this undesirable outcome, in this paper we set out to assess the capabilities a city may require to detect fleets of collectively routing vehicles (CRVs), which may be formed of coordinated fleets of CAVs or subscribers of route guidance systems, including systems such as Google Maps,  whose inner workings are undisclosed, however many drivers unconditionally follow their instructions.
To this end, we introduce and solve the inverse fleet assignment problem, which consists in identifying the flows of fleet vehicles on routes/links based on measurement of total flows. 
We defer the discussion of detection of individual drivers to \cite{JamrozDetectionInPrep}. To fix the ideas, as mentioned above, in the mixed HDV-CRV traffic system we can interpret the CRVs as:
\begin{itemize}
\item \emph {Commercial guidance system}, where a given fraction of drivers is subscribed to a routing system and follows its guidance exactly. The guidance is based on a predefined strategy/behaviour.
\item \emph {Fleet of CAVs}, consisting of a given share of CAVs which select routes according to a pattern dictated by a centralized commercial fleet controller, which applies a given collective strategy, termed fleet behaviour. 
\end{itemize}
In both cases, the identities of drivers of CRVs are unknown to the city. The city is assumed, however, to know the number of participants and the collective strategy adopted by CRVs.

Note that in contrast to scenarios considered in the literature previously \cite{Kashmiri, VanVuren}, in the former case the guidance system is \emph{not} meant to be operated by the city aiming to optimize traffic, but by an independent commercial provider, who may be obligated to reveal some details about the functioning of the system. In the latter case, every traveller owns a CAV. 

The paper is organised as follows. In the remaining part of this section we introduce the setting and 
the inverse fleet assignment problem.  
In Section \ref{Sec_Inverse} we discuss the properties of fleet assignment involving non-standard behaviours and in Section \ref{Sec_Inverse2} we formulate, prove and discuss the main technical result of the paper -- the inverse fleet assignment theorem as well as related results. 
In Section \ref{Sec_Discussion} we summarize the obtained results and discuss the impact and future work. 
Appendix \ref{Sec_Stackelberg} discusses the relation of the myopic routing considered in this paper to Stackelberg and Nash routing and demonstrates that optimal Stackelberg routing may involve mixed strategies. 
Appendix \ref{Sec_convconc} contains proofs omitted from the main exposition and Appendix \ref{Sec_Examples} provides multiple examples of networks for which the inverse fleet assignment theory is or is not applicable.

\subsection{Notation and conventions}
\begin{itemize}
\item $R$ available routes are denoted $r_1, r_2, \dots, r_R$ and numbered $1,\dots, R$.
\item The $R$-dimensional flow vectors on routes are denoted $\bold{q}$ (or $\bold{q^{CRV}}$, $\bold{q^{HDV}}$, etc.) such that $\bold{q} = (q_1, q_2, \dots, q_R)^T$. The total flow is denoted by $|\bold{q}| := q_1 + q_2 + \dots q_R$. If $\bold{q}, \bold{q'}$ are two flow vectors, $\bold{q} \le \bold{q'}$ means that simultaneously $q_1 \le q'_1, \dots, q_R \le q_R'$.  
\item For two vectors $\bold{q}, \bold{q'}$ the dot product is defined by $\bold{q} \cdot \bold{q'}:= \bold{q}^T \bold{q'} = \sum_{r=1}^R q_r q_r'$. 
\item The route delay functions are denoted by $t_1, t_2, \dots$ (i.e. $t$ with an arabic numeral subscript) while link delay functions are denoted either $t_a, t_b, t_c, \dots$, i.e. $t$ with a Latin lowercase letter subscript or $\boldsymbol{\tau}_{\alpha} = (\tau_{\alpha})$, where $\alpha$ stands for links. 
\item The link (arc) flows are denoted $\bold{a}$ (or $\bold{a^{HDV}}$ etc.) and are $A$-dimensional vectors, where $A$ is the number of links (arcs) in the network. 
\end{itemize}

\subsection{Problem setting and Fleet Objective Function}
\label{Sec_FOF}
In the main line of exposition we 
assume that, for simplicity,  
\begin{itemize}
\item the traffic system has one Origin-Destination (OD) pair, see Section \ref{Sec_Gen} for extension to multiple OD settings,
\item there are multiple routes available to drivers, $r = 1,2, \dots, R$ connecting Origin with Destination,
\item the travel time via route $r$, $r = 1,\dots, R$, is given by the delay function $$t_r(q_1, q_2, \dots, q_R),$$ where $q_s$ for $s = 1,2, \dots, R$ is the total flow (number of vehicles) selecting route $s$ on a given day,
\item background traffic, if any, is incorporated into functions $t_r$ and is not part of the flows $q_r$.
\end{itemize}
\noindent In this setting we assume that, every day, $q^{HDV}$ human driven vehicles choose among routes $1, \dots, R$, resulting in HDV flow $\bold{q^{HDV}} = (q_1^{HDV}, \dots, q_R^{HDV})$  and upon this the CRV fleet controller assigns $q^{CRV}$ vehicles to routes, resulting in a split $\bold{q^{CRV}} = (q^{CRV}_1, \dots, q^{CRV}_R)$ such that $|\bold{q^{CRV}}| = q^{CRV}$ and the fleet's myopic (one-day) objective 
\begin{equation}
F(q^{CRV}_1, \dots, q^{CRV}_R) = \lambda^{HDV} T^{HDV} + \lambda^{CRV} T^{CRV}
\label{eq_obj} 
\end{equation}
is minimized. In \eqref{eq_obj} 
\begin{eqnarray*}
T^{HDV} &=& \sum_{r = 1}^R q^{HDV}_r t_{r} \left(\bold{q^{HDV}} + \bold{q^{CRV}}\right),\\
T^{CRV} &=& \sum_{r=1 }^R q^{CRV}_r t_{r} \left(\bold{q^{HDV}} + \bold{q^{CRV}}\right).
\end{eqnarray*}
We consider various behaviours of the fleet, Fig. \ref{Fig_lambdas}, dependent on the values of parameters $\lambda^{HDV}, \lambda^{CRV}$. For instance, see \cite{JamrozSciRep}, 
\begin{itemize}
\item for $(\lambda^{HDV}, \lambda^{CRV}) = (0,1)$ the fleet is selfish as it minimizes its own collective travel time, 
\item $(\lambda^{HDV}, \lambda^{CRV}) = (1,0)$ corresponds to an altruistic fleet minimizing HDVs' travel time
\item $(\lambda^{HDV}, \lambda^{CRV}) = (-1,0)$ accounts for a malicious fleet, which maximizes HDVs' travel time 
\item $(\lambda^{HDV}, \lambda^{CRV}) = (1,1)$ expresses the social goal of minimization of total travel time 
\item  $(\lambda^{HDV}, \lambda^{CRV}) = (-1,1)$ corresponds to a competitive/disruptive fleet. 
\end{itemize}
We dismiss \cite{JamrozSciRep} $\lambda^{CRV}<0$ as corresponding to a fleet with self-harming behaviour component, compare \cite{SchwartingPNAS}, which inspired Fig. \ref{Fig_lambdas}, nevertheless the results presented in this paper can be used for $\lambda^{CRV}<0$ as well. Here a couple of remarks are due:
\begin{enumerate}
\item[i)] Objective \eqref{eq_obj} assumes that the fleet controller knows the flow human drivers' choices are going to result in {\bf before} deciding the proportions of fleet vehicles which are to be routed via different alternatives. This assumption,  although hardly fully realistic, seems to be a well-justified baseline, see Appendix \ref{Sec_Stackelberg} for discussion and comparison with Stackelberg and Nash routing, and a reasonable approximation in systems where human reactions to changing driving conditions are slow and take many days.  
\item[ii)] Objective \eqref{eq_obj} is convex and admits a unique minimizer only for certain combinations of $(\lambda^{HDV}, \lambda^{CRV})$. Hence, in contrast to the standard objectives used in transportation, such as system optimum or user equilibrium, in our setting one can guarantee the existence but {\bf not uniqueness} of the fleet objective minimizer, see Section \ref{Sec_Inverse} and Figure \ref{Fig_convexity}. In particular the standard gradient-based algorithms such as Frank-Wolfe (see \cite{FrankWolfe, BoylesBook}) are insufficient for finding the global minimizer and other algorithms are necessary, see Section \ref{Sec_realworld}. 
\end{enumerate}

\begin{figure*}[h!]
\centering
\includegraphics[scale=0.7]{"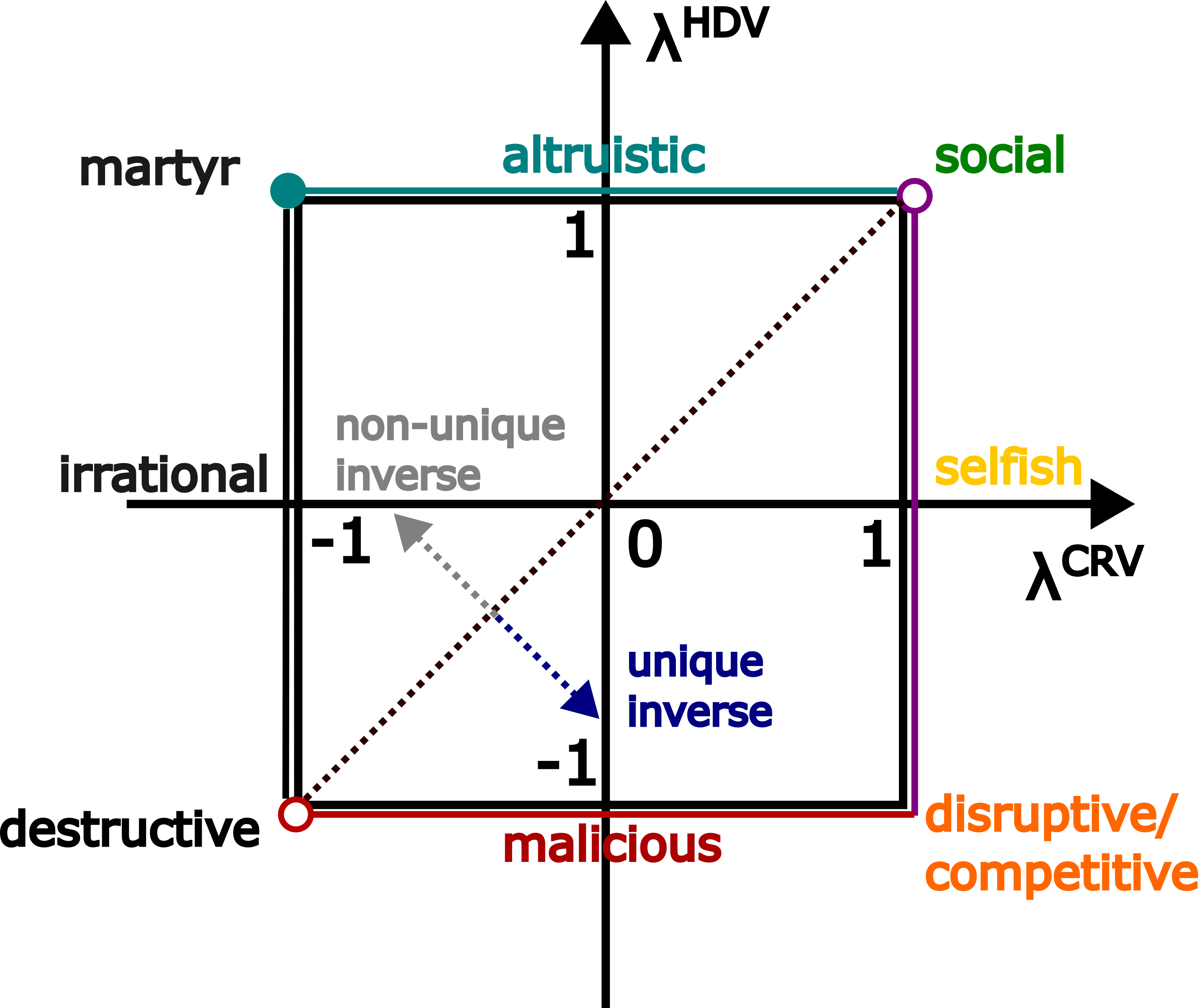"}
\caption{Different fleet objective functions can be presented on a square, with $|\lambda^{HDV}|, |\lambda^{CRV}| \le 1$ and $|\lambda^{HDV}|=1$ or $|\lambda^{CRV}|=1$ . The main technical result of the paper states that the fleet assignment operator can be inverted whenever $\lambda^{CRV} > \lambda^{HDV}$. Consequently, for all rational fleet behaviours ($\lambda^{CRV} \ge 0$) the fleet flows on routes can be recovered if the fleet strategy is more selfish than altruistic (e.g. malicious, disruptive, selfish). While this is impossible for good, pro-social behaviours such as social and altruistic, it seems to be less useful in the detection problem.} 
\label{Fig_lambdas}
\end{figure*}

\subsection{The inverse fleet assignment problem}
Here we state the inverse fleet assignment problem in the continuous setting, with flows allowed to be real-valued, which is a reasonable approximation in large systems. We discuss the discrete version in Section \ref{Sec_discrete}.

\begin{problem}[Inverse fleet assignment problem, continuous]
\label{problem_inverse}
Suppose the total flow in the system on a given day is given by $\bold{q} = (q_1, \dots, q_R)$ and the coefficients $\lambda^{HDV}, \lambda^{CRV}$ of the fleet strategy in \eqref{eq_obj} as well as fleet size $q^{CRV}$ are known. For which $\lambda^{CRV}, \lambda^{HDV}, t_r$ is it possible to determine the flow $\bold{q^{CRV}}$? Is the mapping $\bold{q} \mapsto \bold{q^{CRV}}$ (Lipschitz) continuous? Equivalently, is the inverse mapping $\bold{q} \mapsto \bold{q^{HDV}}$ (Lipschitz) continuous?
\end{problem}

Before embarking on the solution of Problem \ref{problem_inverse} we provide some examples which show that for certain fleet behaviours the inverse may not be 
unique.

\begin{example}[Social fleet]
\label{ex_social}
Let $\bold{q^{SO}}$ be the system optimum for a given total number of vehicles. Suppose that $0 < q^{CRV} < |\bold{q^{SO}}|$. Then for any $\bold{q^{HDV}}$ such that $\bold{0} \le \bold{q^{HDV}} \le \bold{q^{SO}}$ a social fleet assigns vehicles so as to achieve system optimum, i.e.  by $\bold{q^{CRV}} := \bold{q^{SO}} - \bold{q^{HDV}}$. Therefore, the mapping $\bold{q} \mapsto \bold{q^{CRV}}$ is not single-valued at $\bold{q} = \bold{q^{SO}}$, as viable distinct $\bold{q^{CRV}}$ can arise from multiple $\bold{q^{HDV}}$.
\end{example}
\begin{example}[Altruistic fleet]
\label{ex_altruistic}
Suppose the fleet's objective is to minimize human drivers' travel time. Let there be three independent routes and $\bold{q} = (30, 30, 40)$. If $q^{CRV} = 30$ then for $\bold{q^{HDV}} = (30, 0, 40)$ necessarily $\bold{q^{CRV}} = (0, 30, 0)$ as the fleet aims not to increase human drivers' travel time. Similarly, for $\bold{q^{HDV}} = (0, 30, 40)$ we obtain $\bold{q^{CRV}} = (30, 0, 0)$. Consequently, the mapping $\bold{q} \mapsto \bold{q^{CRV}}$ may be multi-valued for altruistic fleets.
\end{example}

Examples \ref{ex_social} and \ref{ex_altruistic} demonstrate that determining the assignment of fleet vehicles based on fleet size and objective fails for social and altruistic fleets. Theorem \ref{th_inverse} shows that for rational objectives given by \eqref{eq_obj} these are essentially the only cases when the mapping $\bold{q} \mapsto \bold{q^{CRV}}$ may not be single-valued, compare Fig. \ref{Fig_lambdas}.

\subsection{Background}
There is a vast literature on autonomous driving. Nevertheless, the coexistence of (fleets of) autonomous vehicles and human drivers has been studied typically from the point of view of microscopic driving, see e.g. \cite{Gora2020, Farah2022} or evolutionary game theory \cite{Bitar2022}. The game-theoretical route choice behaviour in mixed systems involving various players has been addressed e.g. in \cite{Yang2007, Harker1988}. The detection of cooperating fleets of vehicles has been studied for car following and platooning in \cite{Charlottin2024}.  However, it seems that the identification of group players has not been addressed in the route choice context. 

Inverse problems, see e.g. \cite{Groetsch}, have been known in science for a long time, and first rigorous solutions date back to the early 20th century when Hermann Weyl \cite{Weyl} deduced the asymptotic growth of eigenvalues of the Laplace operator from the area of the domain, which later gave rise to the celebrated 'Can one hear the shape of a drum?' problem \cite{Kac}. 

Solving inverse problems allows one to deduce the cause (e.g. initial condition, parameters) given the observation. For instance, observation of the gravitational or electric field may allow one to deduce the distribution of mass or electric charge and 'inverting' the diffraction image or signals emitted by protons are the basis of spectroscopic methods in physics and chemistry, e.g. \cite{Woolfson}  or tomography methods in medicine, e.g. \cite{Morris}. Many inverse problems, however, are ill-posed, e.g. the unstable inverse heat equation, in which the initial temperature distribution is to be deduced from observation of this temperature at a later point in time, or indeed the shape of a drum which is in general not uniquely determined by the set of the eigenvalues of the Laplace operator \cite{GordonDrum}. 

The most important inverse problem in transportation engineering is the OD demand matrix estimation from observed link flows, see e.g. \cite{Bierlaire, Cascetta, Krylatov, VanZulyen}. In this approach the observer is provided with link counts (congestion levels) from detectors and tries to estimate the OD demand matrix, typically under the assumption of user equilibrium. Our setting bears some resemblance to it in the way that we begin with link flows. However, we do not estimate the OD demand; in fact the crux of the argument is presented on a single Origin-Destination pair. What we instead strive to estimate are the flows of fleet vehicles under known total flows on routes/links, and \emph {known} OD demand of CRVs and HDVs. Another stream of research is related to inferring the network state from transportation data \cite{NetworkLearning}, such as the shortest path from some observations \cite{BurtonToint}. 
We are not aware of any literature on inverse problems of fleet assignment in a setting similar to ours, see e.g. \cite{Drayage} for a different context of drayage, since, the problem being futuristic enough, it has not been considered in classical transportation settings. Therefore, the results presented in this paper are, to the authors knowledge, the first of a kind and open a new area of research. 

Convex optimization methods in transportation are studied in detail e.g. in \cite{BoylesBook} where also standard algorithms, such as Frank-Wolfe constrained optimization \cite{FrankWolfe} for determining user equilibrium or system optimum, which result in convex functionals, are discussed. However, for more elaborate assignment objectives,  efficient algorithms seem to be unexplored. It is the more so for the inverse problems, see however \cite{InverseOpt} for a recent survey of inverse optimization methods and \cite{Heuberger} for the inverse combinatorial problems such as the inverse shortest path problem, where the goal is to find costs of arcs, closest possible to the a priori estimates, such that the observed paths selected by travelers are the shortest.

Finally, let us mention the more general Inverse Optimal Control \cite{Kalman} and Inverse Reinforcement Learning, \cite{IOCIRL, Deshpande, IRL}, which typically focus on recovering the reward function, rather than initial state/parameters, from observed sequential/continuous behaviour and are more suited to problems with abundant data, e.g. \cite{ScalableIRL}, in contrast to the static limited-data one-off inverse problem considered in this paper. 

\section{Fleet assignment theory}

\label{Sec_Inverse}

In this section we discuss the basic ideas related to myopic fleet assignment and introduce the fleet assignment operators. We study their properties, providing multiple examples to facilitate understanding. We remark that the notions related to networks, which we utilize, such as route flow, link flow, link-route incidence matrix, route-flow to link-flow conversion as well as system-optimal (yet not with general objectives) optimization in both route-flow and link-flow formulation are standard in transportation theory, see e.g. \cite{CascettaBook, BoylesBook}.
We also note that here we use the term 'operator' to represent a mapping between two spaces, not to be confused with the fleet operator/controller who owns/controls the fleet and prescribes its behaviour. In the end, however, the fleet operator/controller may indeed use the image of the fleet assignment operator to assign the flows to routes. For clarity, in the following we avoid using the term 'fleet operator' in favour of 'fleet controller' and '(fleet) assignment operator' is understood as a mapping between vector spaces. 

\subsection{\bf Fleet objective function}
\noindent Reformulating \eqref{eq_obj} we define the fleet objective function as 
\begin{equation}
\label{eq_objective}
F(\bold{h}, \bold{f}) = (\lambda^{HDV}\bold{h} + \lambda^{CRV}\bold{f})\cdot \bold{t}(\bold{h} + \bold{f})
\end{equation}
which, for a given HDV flow $\bold{h} \in [0,\infty)^R$, assigns to every fleet assignment $\bold{f} \in [0,\infty)^R$ a generalized cost $F(\bold{h}, \bold{f})$. In \eqref{eq_objective}
$$\bold{t}(\bold{q}) := (t_1(\bold{q}), \dots, t_R(\bold{q}))^T$$
is the R-vector of travel times
and $\bold{f}\cdot \bold{t}$ and $\bold{h}\cdot \bold{t}$ are dot products.

\begin{remark}
\label{Rem_continuity}
Continuity properties of $F$ are the same as continuity properties of $\bold{t}$. For instance, if $\bold{t}$ is Lipschitz continuous then so is $F(h,\cdot)$. 
\end{remark}

\subsection{\bf Fleet Assignment Operators}
\begin{definition}[Route Fleet Assignment Operator]
For given $\lambda^{HDV}, \lambda^{CRV}$ and fleet size $q^{CRV}$ we define the \emph{Route Fleet Assignment Operator} $G: [0,\infty)^R \to 2^{[0,\infty)^R}$ by
\begin{equation*}
G\left(\bold{q^{HDV}}\right) = \arg \min_{\bold{f}} \left\{F(\bold{q^{HDV}},\bold{f}): |\bold{f}| = q^{CRV}\right\}.
\end{equation*} 
The Route Fleet Assignment Operator $G$ assigns to every $\bold{q^{HDV}}$ the set of minimizers of the fleet objective function. Using $G$, we define the operator $H$, which maps the HDV flow to the total flow by 
\begin{equation*}
H := Id + G,
\end{equation*}
i.e.
\begin{equation*}
H\left(\bold{q^{HDV}}\right) = \bold{q^{HDV}} + G\left(\bold{q^{HDV}}\right).
\end{equation*}
\end{definition}

\begin{definition}
Let $I \in \{0,1\}^{R \times A}$ be the \emph {link-route incidence matrix} defined by $I^{r\alpha} = 1$ if link $\alpha$ belongs to route $r$ and $I^{r\alpha} = 0$ otherwise.
\end{definition}

\begin{definition}
Let $\Lambda : [0,\infty)^R \to [0,\infty)^A$ be the route-flow to link-flow conversion operator defined by
\begin{equation*}
\Lambda(\bold{q})_\alpha = \sum_{r=1}^R I^{r\alpha} q_r.
\end{equation*}
\end{definition}

\begin{definition}[Link Fleet Assignment Operator]
\label{def_linkassignoper}
Suppose the delay functions $t_r$ on every available route $r$ are given as sums of costs on links, i.e. $t_r(\bold{q}) = \sum_{\alpha} I^{r\alpha} \tau_\alpha(\Lambda(\bold{q}))$ where $\tau_\alpha$ are differentiable link-delay functions (e.g. BPR functions). Consider the link objective function
\begin{equation*}
\Phi(\boldsymbol{\eta}, \boldsymbol{\phi}) = (\lambda^{HDV}\boldsymbol{\eta} + \lambda^{CRV}\boldsymbol{\phi})\cdot \boldsymbol{\tau}(\boldsymbol{\eta} + \boldsymbol{\phi}).
\end{equation*}
For given $\lambda^{HDV}, \lambda^{CRV}$ and fleet size $q^{CRV}$ we define the \emph{Link Fleet Assignment Operator} $\Gamma: [0,\infty)^A \to 2^{[0,\infty)^A}$ by
\begin{equation*}
\Gamma \left(\boldsymbol{\eta^{HDV}}\right) = \arg \min_{\boldsymbol{\phi}} \left\{\Phi(\boldsymbol{\eta^{HDV}},\boldsymbol{\phi}): \exists \bold{f}: |\bold{f}| = q^{CRV} \mbox{ and } \boldsymbol{\phi} = \Lambda(\bold{f})\right\}.
\end{equation*} 
The Link Fleet Assignment Operator identifies the set of optimal fleet link flows given the HDV link flow. This link flow may potentially originate from various fleet route flows. 
\end{definition}

\begin{proposition}[Convexity of domain]
\label{Prop_convexity}
\,

\begin{enumerate}
\item[i)] The set $\{\bold{q} \in [0,\infty)^R:  |\bold{q}| = q^{CRV}\}$ is convex.
\item[ii)] The set $\{\bold{a} \in [0,\infty)^A: \exists \bold{f}: |\bold{f}| = q^{CRV} \mbox{ and } \bold{a} = \Lambda(\bold{f})\}$ is convex.
\end{enumerate}
\end{proposition}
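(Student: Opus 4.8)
The plan is to recognize both sets as convex by elementary means: the first as an intersection of two convex sets (equivalently, via a direct convex-combination check), and the second as the image of the first under the linear operator $\Lambda$.

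For part i), I would first observe that the functional $\bold{q} \mapsto |\bold{q}| = q_1 + \dots + q_R$ is linear, so the constraint $|\bold{q}| = q^{CRV}$ defines an affine hyperplane, which is convex; the orthant $[0,\infty)^R$ is likewise convex, being a product of the convex sets $[0,\infty)$. The set in question is their intersection, hence convex. Alternatively, to keep the argument self-contained, I would take two points $\bold{q}, \bold{q'}$ in the set and $\theta \in [0,1]$, and verify directly that $\theta \bold{q} + (1-\theta)\bold{q'}$ has nonnegative entries (a convex combination of nonnegative reals is nonnegative) and satisfies $|\theta \bold{q} + (1-\theta)\bold{q'}| = \theta |\bold{q}| + (1-\theta)|\bold{q'}| = q^{CRV}$ by linearity of $|\cdot|$.

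For part ii), the key observation is that the set described is exactly the image $\Lambda(S)$, where $S = \{\bold{f} \in [0,\infty)^R : |\bold{f}| = q^{CRV}\}$ is the convex set from part i). Since $\Lambda(\bold{q})_\alpha = \sum_r I^{r\alpha} q_r$ is a matrix-vector product, $\Lambda$ is linear, and a linear (more generally affine) map sends convex sets to convex sets. Concretely, given $\bold{a} = \Lambda(\bold{f})$ and $\bold{a'} = \Lambda(\bold{f'})$ with $\bold{f}, \bold{f'} \in S$, linearity gives $\theta \bold{a} + (1-\theta)\bold{a'} = \Lambda(\theta \bold{f} + (1-\theta)\bold{f'})$, and $\theta \bold{f} + (1-\theta)\bold{f'} \in S$ by part i); hence the convex combination again lies in $\Lambda(S)$.

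There is no substantial obstacle here; the statement is elementary. The only point requiring a modicum of care is to read the existential quantifier in ii) correctly, namely that the set is precisely the pushforward of $S$ under $\Lambda$ rather than a fibre or preimage, and to note explicitly that both the total-flow functional $|\cdot|$ and the conversion operator $\Lambda$ are linear, so that the relevant constraints and images are affine. These observations reduce the proposition to the standard facts that intersections of convex sets and affine images of convex sets are convex.
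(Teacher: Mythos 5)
Your proposal is correct and follows essentially the same route as the paper: a direct convex-combination check using the linearity of $|\cdot|$ for part i) and the linearity of $\Lambda$ to exhibit the set in ii) as the image of the set in i). The only difference is presentational — you spell out the nonnegativity of entries and the hyperplane-intersection viewpoint, which the paper leaves implicit.
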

\begin{proof}
If $\bold{a} = \Lambda(\bold{f})$, $\bold{\tilde{a}} = \Lambda(\tilde{\bold{f}})$ and $|\bold{f}| = |\bold{\tilde{f}}| = q^{CRV}$ then for every $c \in [0,1]$ we have $$c\bold{a}+(1-c)\bold{\tilde{a}} = c\Lambda(\bold{f}) + (1-c)\Lambda(\bold{\tilde{f}}) = \Lambda(c \bold{f} + (1-c) \bold{\tilde{f}}),$$ where we used the linearity of operator $\Lambda$. We conclude by noting that $$|c \bold{f} + (1-c) \bold{\tilde{f}}| = c|\bold{f}| + (1-c)|\bold{\tilde{f}}| = q^{CRV},$$ which proves both i) and ii).  
\end{proof}

\begin{proposition}[Properties of Fleet Assignment Operators]
Assume that the route/link delay functions $t_r, \tau_\alpha$ are continuous, increasing and convex. Then 
\begin{enumerate}
\item [i)] $G(\bold{q^{HDV}})$ is non-empty for every $\bold{q^{HDV}}$. 
\item [ii)] $\Gamma(\boldsymbol{\eta^{HDV}})$ is non-empty for every $\boldsymbol{\eta^{HDV}}$. 
\item [iii)] If $G(\bold{q^{HDV}})$ contains one element then the corresponding $\Gamma(\boldsymbol{\eta^{HDV}})$ is a one-element set. 
\end{enumerate}
\end{proposition}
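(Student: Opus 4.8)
The plan is to dispatch the two existence claims i) and ii) with a standard compactness argument, and to prove the uniqueness-transfer claim iii) by exhibiting an exact correspondence between the route-level objective $F$ and the link-level objective $\Phi$.

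For i) I would note that the feasible set $S := \{\bold{f} \in [0,\infty)^R : |\bold{f}| = q^{CRV}\}$ is a scaled simplex, hence closed and bounded, i.e. compact. By Remark \ref{Rem_continuity} the map $\bold{f} \mapsto F(\bold{q^{HDV}}, \bold{f})$ inherits the continuity of $\bold{t}$, which is assumed continuous, so a continuous function on a nonempty compact set attains its infimum (Weierstrass) and the $\arg\min$ defining $G(\bold{q^{HDV}})$ is nonempty. For ii) the feasible set of link flows is $\Lambda(S)$; since $\Lambda$ is linear, hence continuous, and $S$ is compact, $\Lambda(S)$ is a nonempty compact set (convex by Proposition \ref{Prop_convexity}), while $\boldsymbol{\phi} \mapsto \Phi(\boldsymbol{\eta^{HDV}}, \boldsymbol{\phi})$ is continuous because $\boldsymbol{\tau}$ is; Weierstrass again yields a minimizer, so $\Gamma(\boldsymbol{\eta^{HDV}})$ is nonempty.

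The heart of the argument is iii), which I would read with $\boldsymbol{\eta^{HDV}} = \Lambda(\bold{q^{HDV}})$. The key step is the identity
\begin{equation*}
F(\bold{h}, \bold{f}) = \Phi\bigl(\Lambda(\bold{h}), \Lambda(\bold{f})\bigr) \qquad \text{for all } \bold{h}, \bold{f} \in [0,\infty)^R,
\end{equation*}
which I would establish by substituting $t_r(\bold{q}) = \sum_\alpha I^{r\alpha} \tau_\alpha(\Lambda(\bold{q}))$ into $F$, using $\Lambda(\bold{h} + \bold{f}) = \Lambda(\bold{h}) + \Lambda(\bold{f})$, and interchanging the summation over routes $r$ and links $\alpha$ so that $\sum_r I^{r\alpha}(\lambda^{HDV} h_r + \lambda^{CRV} f_r)$ collapses to $\lambda^{HDV} \Lambda(\bold{h})_\alpha + \lambda^{CRV} \Lambda(\bold{f})_\alpha$. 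Granting this identity, since every feasible link flow is $\Lambda(\bold{f})$ for some $\bold{f} \in S$, we get
\begin{equation*}
\min_{\boldsymbol{\phi} \in \Lambda(S)} \Phi(\boldsymbol{\eta^{HDV}}, \boldsymbol{\phi}) = \min_{\bold{f} \in S} \Phi(\boldsymbol{\eta^{HDV}}, \Lambda(\bold{f})) = \min_{\bold{f} \in S} F(\bold{q^{HDV}}, \bold{f}),
\end{equation*}
so the two optimal values coincide. Now suppose $G(\bold{q^{HDV}}) = \{\bold{f}^*\}$ and let $\boldsymbol{\phi}$ be any element of $\Gamma(\boldsymbol{\eta^{HDV}})$, with $\boldsymbol{\phi} = \Lambda(\bold{f})$ for some $\bold{f} \in S$. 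Then $F(\bold{q^{HDV}}, \bold{f}) = \Phi(\boldsymbol{\eta^{HDV}}, \boldsymbol{\phi})$ equals the common minimal value, so $\bold{f}$ minimizes $F$ over $S$; by single-valuedness $\bold{f} = \bold{f}^*$, whence $\boldsymbol{\phi} = \Lambda(\bold{f}^*)$, and $\Gamma(\boldsymbol{\eta^{HDV}}) = \{\Lambda(\bold{f}^*)\}$ is a singleton.

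The step I expect to require the most care is iii): although $\Lambda$ is typically many-to-one (several route flows yield the same link flow), the argument shows that uniqueness at the route level nonetheless forces uniqueness at the link level, precisely because any link-optimal $\boldsymbol{\phi}$ pulls back to a route-optimal $\bold{f}$. The algebra behind the $F$--$\Phi$ identity — correctly swapping the order of the route and link summations and invoking the linearity of $\Lambda$ — is the only genuinely computational point, while the compactness of $\Lambda(S)$ needed in ii) is routine as the continuous image of a compact set. Note that monotonicity and convexity of $t_r, \tau_\alpha$ are not actually used here; continuity alone suffices for all three parts.
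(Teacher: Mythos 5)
Your proof is correct and follows essentially the same route as the paper's: compactness of the feasible simplex plus Weierstrass for i) and ii), and the correspondence through the single-valued operator $\Lambda$ for iii). Your treatment of iii) is more detailed than the paper's one-line justification --- you make explicit the identity $F(\bold{h},\bold{f})=\Phi(\Lambda(\bold{h}),\Lambda(\bold{f}))$ (which the paper only records later, in Proposition~\ref{Prop_SLF}) and the pullback of link-optimal flows to route-optimal ones, which is exactly what is needed to make the paper's terse argument rigorous.
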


\begin{proof}
i) and ii) are an immediate consequence of compactness of the set $\{\bold{f} : |\bold{f}| = q^{CRV} \}$. iii) follows by the fact that the operator $\Lambda$ of route-flow to link-flow conversion is single-valued.
\end{proof}

\begin{proposition}[Fleet optimization for independent links]
\label{Prop_foil}
Suppose that the link travel times are independent, i.e. 
\begin{equation*}
\Phi(\boldsymbol{\eta}, \boldsymbol{\phi}) = \sum_{\alpha} (\lambda^{HDV} \eta_{\alpha} + \lambda^{CRV} \phi_{\alpha}) \tau_\alpha (\eta_\alpha + \phi_\alpha) = \sum_\alpha \Phi_\alpha({\eta}_\alpha, {\phi}_\alpha)
\end{equation*}
and $\tau_{\alpha}$ are continuously differentiable, strictly increasing and convex. Then, see Fig. \ref{Fig_convexity} (see also Figs. \ref{Fig_convexity_example}, \ref{Fig_convexity_example3routes} for examples),
\begin{enumerate}
\item [i)] $\Phi(\boldsymbol{\eta}, \cdot)$ is convex if $\lambda^{HDV}, \lambda^{CRV}\ge 0$ and $\lambda^{HDV} + \lambda^{CRV}>0$. Consequently, $\Gamma(\boldsymbol{\eta^{HDV})}$ is a one-element set, i.e. to every $\boldsymbol{\eta^{HDV}} = \Lambda(\bold{q^{HDV}})$ it assigns a unique fleet link flow.
\item [ii)] $\Phi(\boldsymbol{\eta}, \cdot)$ is concave if  $\lambda^{HDV}, \lambda^{CRV}\le 0$ and $\lambda^{HDV} + \lambda^{CRV}<0$. Consequently, the minimizers of $\Phi$ may be nonunique.
\item [iii)] If $\lambda^{HDV}<0$ and $\lambda^{CRV} > 0$ then $\Phi(\boldsymbol{\eta}, \cdot)$ may be convex, concave or either locally convex or concave depending on the relative values of $\lambda^{HDV}, \lambda^{CRV}$ and values of flows $\boldsymbol{\eta}, \boldsymbol{\phi}$. 
\end{enumerate}
\end{proposition}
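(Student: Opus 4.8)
The plan is to use the link-independence hypothesis to collapse the multivariate convexity question into a one-dimensional one. Because $\Phi(\boldsymbol{\eta},\boldsymbol{\phi}) = \sum_\alpha \Phi_\alpha(\eta_\alpha,\phi_\alpha)$ and each summand depends on the single variable $\phi_\alpha$, the Hessian of $\boldsymbol{\phi}\mapsto\Phi(\boldsymbol{\eta},\boldsymbol{\phi})$ is diagonal, its $\alpha$-th entry being $\partial^2_{\phi_\alpha}\Phi_\alpha$. Hence $\Phi(\boldsymbol{\eta},\cdot)$ is convex (resp.\ concave) on $[0,\infty)^A$ exactly when every scalar map $\phi_\alpha\mapsto\Phi_\alpha(\eta_\alpha,\phi_\alpha)$ is convex (resp.\ concave), and the whole argument reduces to controlling the sign of one second derivative.

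First I would compute, with $s:=\eta_\alpha+\phi_\alpha$,
\[
\partial^2_{\phi_\alpha}\Phi_\alpha(\eta_\alpha,\phi_\alpha) = 2\lambda^{CRV}\tau_\alpha'(s) + (\lambda^{HDV}\eta_\alpha+\lambda^{CRV}\phi_\alpha)\,\tau_\alpha''(s),
\]
and read off the three cases using $\tau_\alpha'>0$, $\tau_\alpha''\ge 0$ and $\eta_\alpha,\phi_\alpha\ge 0$. In case (i) both terms are nonnegative, so the expression is $\ge 0$ and $\Phi(\boldsymbol{\eta},\cdot)$ is convex. In case (ii) the sign restrictions on the $\lambda$'s make the first term $\le 0$ and, since $\lambda^{HDV}\eta_\alpha+\lambda^{CRV}\phi_\alpha\le 0$, the second term $\le 0$ as well, giving concavity. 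In case (iii) the first term is strictly positive while the second carries the sign of $\lambda^{HDV}\eta_\alpha+\lambda^{CRV}\phi_\alpha = \lambda^{CRV}\phi_\alpha - |\lambda^{HDV}|\eta_\alpha$, which may be of either sign, so no definite conclusion can be drawn — precisely the content of (iii).

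For the consequences I would proceed as follows. In (i), to obtain the single-valuedness of $\Gamma$ I upgrade convexity to \emph{strict} convexity: when $\lambda^{CRV}>0$ the leading term $2\lambda^{CRV}\tau_\alpha'(s)$ is strictly positive, the diagonal Hessian is positive definite, and minimising a strictly convex function over the convex feasible set of Proposition \ref{Prop_convexity} yields a unique minimiser $\boldsymbol{\phi}$. For (ii), concavity forces the minimum over the compact convex feasible polytope to be attained at an extreme point, and a symmetric two-link instance — two identical links among which the fleet may be split arbitrarily — produces two distinct minimising vertices, establishing the claimed possible non-uniqueness. For (iii), the assertion is one of indefiniteness, so I would substantiate it by explicit instances: affine $\tau_\alpha$ ($\tau_\alpha''\equiv 0$) leaves only $2\lambda^{CRV}\tau_\alpha'>0$, hence convexity; a strongly convex $\tau_\alpha$ with $|\lambda^{HDV}|\eta_\alpha$ large relative to $\lambda^{CRV}$ drives the second derivative negative, hence concavity; and an intermediate regime in which $\lambda^{CRV}\phi_\alpha-|\lambda^{HDV}|\eta_\alpha$ changes sign as $\phi_\alpha$ grows yields a map that is locally concave then locally convex.

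The main obstacle is not the sign computation but the step from convexity to single-valuedness of $\Gamma$ in (i), and in particular the boundary $\lambda^{CRV}=0$ (the altruistic edge of Fig.~\ref{Fig_lambdas}). There strict convexity genuinely fails on any link carrying no HDV flow, $\eta_\alpha=0$, on which the fleet may be redistributed freely without changing $\Phi$, so that $\Gamma$ can be multi-valued; this altruistic degeneracy is of the same nature as the non-uniqueness illustrated in Example \ref{ex_altruistic}. I would therefore state the uniqueness conclusion of (i) under the sharper hypothesis $\lambda^{CRV}>0$ (which still covers the selfish, disruptive and malicious regimes), and treat $\lambda^{CRV}=0$ as the honest exception rather than forcing a uniqueness claim that does not hold in that degenerate case.
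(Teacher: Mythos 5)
Your proposal is correct and follows essentially the same route as the paper: the paper's proof of (i) and (ii) likewise reduces to per-link convexity of $\Phi_\alpha$, and its treatment of (iii), deferred to Appendix~\ref{Sec_convconc}, computes exactly your second derivative $\partial^2_{\phi_\alpha\phi_\alpha}\Phi_\alpha = 2\lambda^{CRV}\tau_\alpha'(s) + (\lambda^{HDV}\eta_\alpha+\lambda^{CRV}\phi_\alpha)\tau_\alpha''(s)$ and exhibits the same affine/convex instances. Your caveat about the altruistic edge is a genuine improvement rather than a detour: the paper passes from convexity of $\Phi(\boldsymbol{\eta},\cdot)$ directly to uniqueness of the minimizer, which requires \emph{strict} convexity; at $\lambda^{CRV}=0$ this fails (e.g.\ three independent links with $\boldsymbol{\eta}=(0,0,10)$ leave the fleet free to split arbitrarily between the first two links without changing $\Phi$), so $\Gamma$ can indeed be multi-valued there, consistent with Example~\ref{ex_altruistic}. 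Restricting the uniqueness conclusion of (i) to $\lambda^{CRV}>0$, as you do, is the correct fix.
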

\begin{figure*}[h]
\centering
\includegraphics[scale=0.7]{"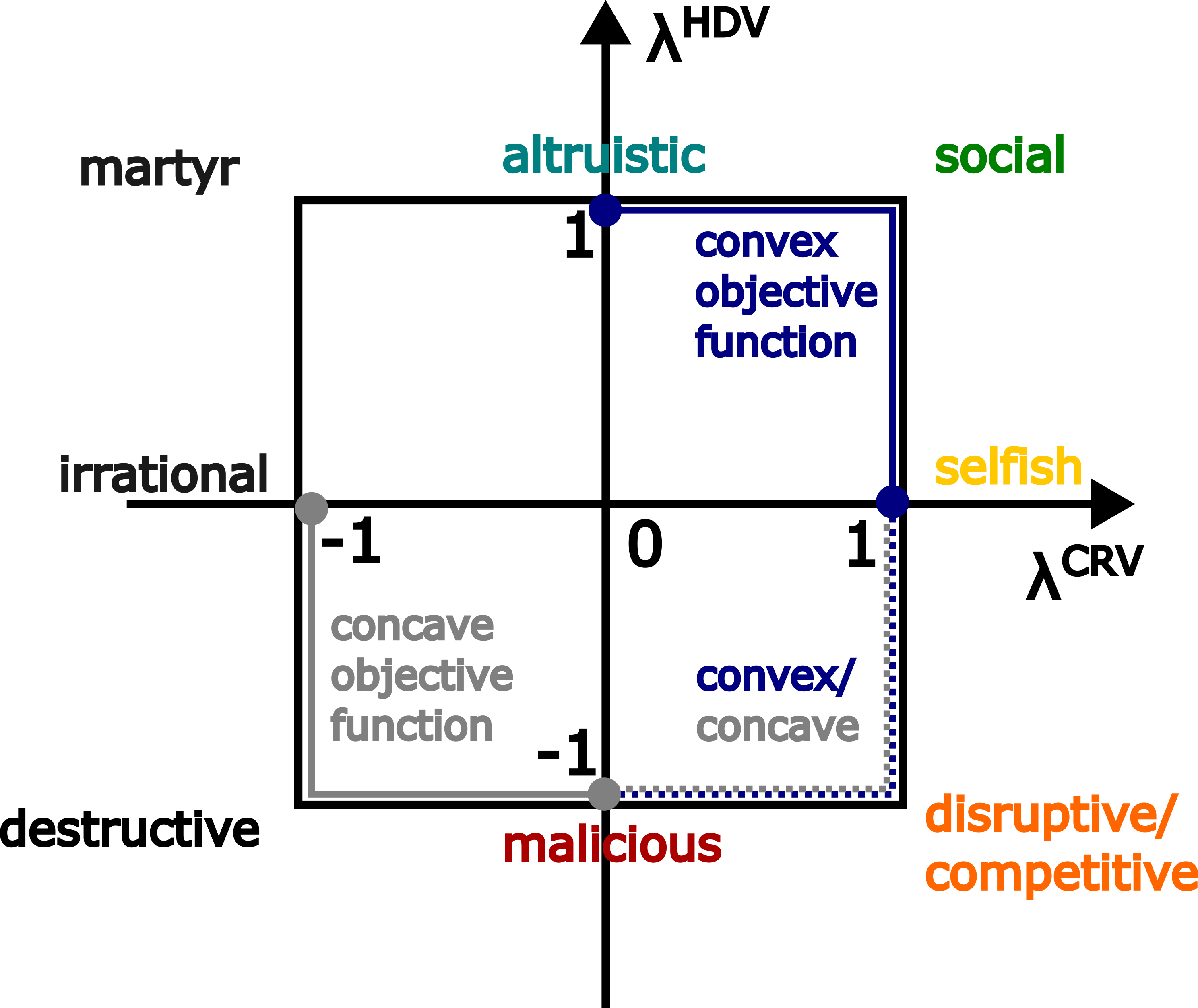"}
\caption{Different combinations of $(\lambda^{HDV}, \lambda^{CRV})$ result in the link objective function being convex or concave or locally convex/concave dependent on the relative values of fleet and HDV flows. Therefore, a global minimizer of fleet objective, unique in convex cases, is no longer guaranteed to be unique across a large part of the behavioral spectrum, e.g. for competitive and malicious objectives. The link delay functions are assumed to be increasing and convex.} 
\label{Fig_convexity}
\end{figure*}

\begin{figure*}[h]
\hspace{-0.9cm}
\includegraphics[scale=0.6]{"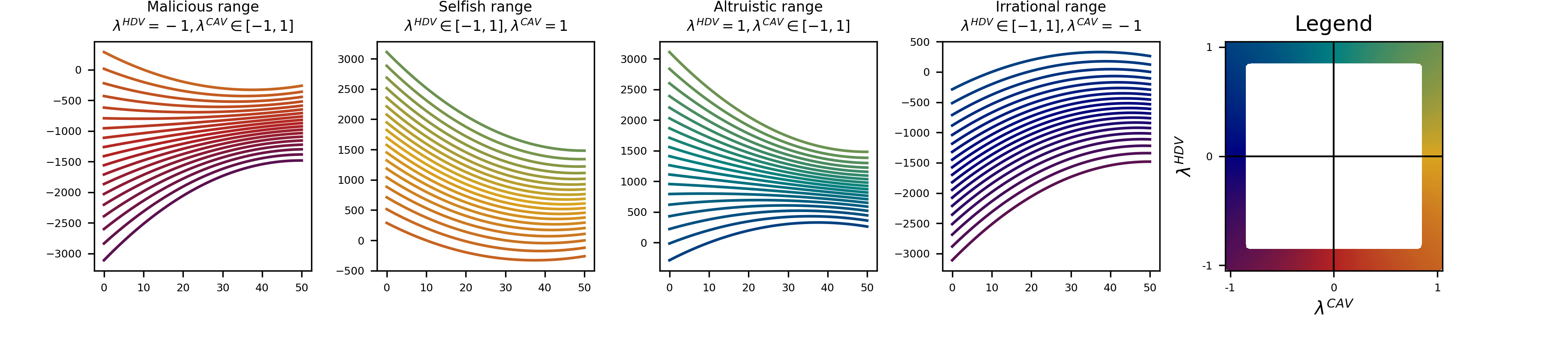"}
\caption{Objective function \eqref{eq_objective} shape for different fleet strategies corresponding to different couples $(\lambda^{HDV}, \lambda^{CRV})$. Here, we assumed two independent routes with delay functions given by $\bold{t}(\bold{q}) =  (t_1(q_1), t_2(q_2)) = (5(1 + (q_1/50)^2), 15(1 + (q_2/80)^2))$, fixed HDV flow $\bold{h} = (10, 40)$ and $50$ fleet vehicles.  Vertical axis -- objective function value. Horizontal axis -- flow $f_1$ (e.g. value $5$ corresponds to fleet flow $\bold{f} = (5,45)$). The objective is concave for the malicious strategy, changes into convex for certain strategy in the disruptive area, is convex for selfish, social and altruistic strategies, compare Fig. \ref{Fig_convexity}} 
\label{Fig_convexity_example}
\end{figure*}

\begin{figure*}[h!]
\hspace{-0.7cm}
\includegraphics[scale=0.55]{"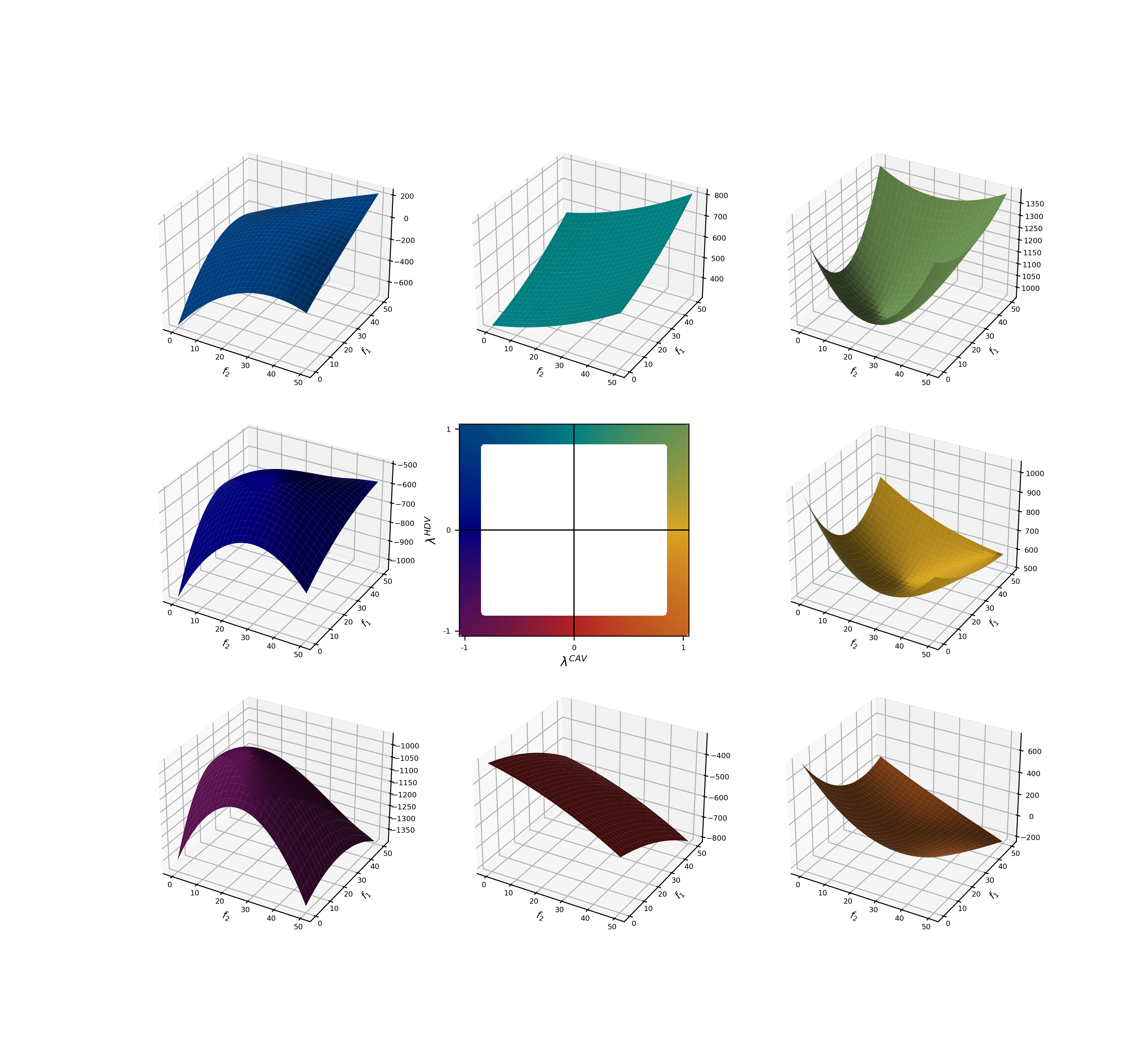"}
\caption{Objective function \eqref{eq_objective} shape for different fleet strategies corresponding to different couples $(\lambda^{HDV}, \lambda^{CRV})$. Here, we assumed three independent routes with delay functions given by $\bold{t}(\bold{q}) =  (t_1(q_1), t_2(q_2), t_3(q_3)) = (5(1 + (q_1/50)^2), 5(1 + (q_2/50)^2), 15(1 + (q_3/80)^2))$, fixed HDV flow $\bold{h} = (25, 25, 0)$ and $50$ fleet vehicles.  Vertical axis -- objective function value. Horizontal axes -- flows $f_1$ and $f_2$ (e.g. $f_1 = 5$, $f_2 = 10$ corresponds to fleet flow $\bold{f} = (5, 10, 35)$). The feasible set is given by $\{f_1 + f_2 \le 50\}$ in order to guarantee non-negativity of flow $f_3$. The objective is concave for the malicious strategy, and convex for selfish, social and altruistic strategies, compare Fig. \ref{Fig_convexity}. The profiles for strategies for $\lambda^{CRV} < 0$ can be obtained from the profiles for $\lambda^{CRV} > 0$ by taking the negative of the respective objective function.}
\label{Fig_convexity_example3routes}
\end{figure*}

\begin{proof}
\begin{enumerate}
\item [i)] If both $\lambda^{HDV}$ and $\lambda^{CRV}$ are non-negative and at least one of them is positive then $\Phi_\alpha$ is convex for every $\alpha$. Consequently, $\Phi$ is convex and $\Gamma$ minimizes $\Phi$ on a convex set. Therefore, the minimizer is unique.   
\item [ii)] Concavity follows by i) applied to $-\Phi$. The minimizers (perhaps non-unique) of a concave function are assumed at the boundary of the feasible set. 
\item [iii)] See Appendix \ref{Sec_convconc}.
\end{enumerate}
\end{proof}

\begin{proposition}
If the routes are linearly dependent, i.e. the link-route incidence matrix is not of maximal rank, then the Route Fleet Assignment Operator $G$ is in general multi-valued. 
\end{proposition}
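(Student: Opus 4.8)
The plan is to exploit a single structural fact: the route objective $F$ depends on the fleet route flow $\bold{f}$ only through the link flow $\Lambda(\bold{f})$ it induces, so non-uniqueness of the route decomposition of an optimal link flow immediately forces $G$ to be multivalued. First I would establish the factorization $F(\bold{q^{HDV}},\bold{f}) = \Phi(\Lambda(\bold{q^{HDV}}),\Lambda(\bold{f}))$. Writing $t_r(\bold{q}) = \sum_\alpha I^{r\alpha}\tau_\alpha(\Lambda(\bold{q}))$ as in Definition \ref{def_linkassignoper} gives $\bold{t}(\bold{q}) = I\,\boldsymbol{\tau}(\Lambda(\bold{q}))$, and since $\Lambda = I^T$ one transfers $I$ across the dot product via $\bold{u}\cdot(I\bold{w}) = (I^T\bold{u})\cdot\bold{w}$. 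With $\bold{u} = \lambda^{HDV}\bold{q^{HDV}} + \lambda^{CRV}\bold{f}$ and $\boldsymbol{\eta} = \Lambda(\bold{q^{HDV}})$, $\boldsymbol{\phi} = \Lambda(\bold{f})$ the route pairing becomes exactly the link pairing $(\lambda^{HDV}\boldsymbol{\eta}+\lambda^{CRV}\boldsymbol{\phi})\cdot\boldsymbol{\tau}(\boldsymbol{\eta}+\boldsymbol{\phi})$ defining $\Phi$. In particular $F(\bold{q^{HDV}},\cdot)$ is constant on the fibres of $\Lambda$.

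Next I would identify $G$ with a $\Lambda$-preimage of $\Gamma$. Let $S = \{\bold{f}\ge\bold{0}: |\bold{f}| = q^{CRV}\}$ be the feasible simplex; by Proposition \ref{Prop_convexity} it is convex and its image $\Lambda(S)$ is precisely the feasible link-flow set in Definition \ref{def_linkassignoper}. The factorization then yields $\min_{\bold{f}\in S}F = \min_{\boldsymbol{\phi}\in\Lambda(S)}\Phi$, with $\bold{f}^\star\in S$ minimizing $F$ if and only if $\Lambda(\bold{f}^\star)$ minimizes $\Phi$. Hence $G(\bold{q^{HDV}}) = S\cap\Lambda^{-1}\!\big(\Gamma(\Lambda(\bold{q^{HDV}}))\big)$, so every optimal link flow $\boldsymbol{\phi}^\star\in\Gamma$ contributes its entire fibre $S\cap\Lambda^{-1}(\boldsymbol{\phi}^\star)$ to $G$. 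Consequently $G$ fails to be single-valued as soon as one such fibre contains more than one point, which is exactly the situation anticipated after Definition \ref{def_linkassignoper}: one optimal link assignment arising from several fleet route flows.

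It then remains to produce a non-degenerate fibre. Linear dependence of the routes means $\operatorname{rank}(I) < R$, i.e. $\ker\Lambda = \ker I^T \neq \{\bold{0}\}$, so there is $\bold{c}\neq\bold{0}$ with $\Lambda(\bold{c}) = \bold{0}$. If in addition $\sum_r c_r = 0$, then $\bold{c}$ is tangent to the affine hull of $S$; taking a minimizer $\bold{f}^\star\in G(\bold{q^{HDV}})$ in the relative interior of $S$ (the generic case) and $\epsilon$ small, the points $\bold{f}^\star\pm\epsilon\bold{c}$ are feasible, distinct, and satisfy $\Lambda(\bold{f}^\star\pm\epsilon\bold{c}) = \Lambda(\bold{f}^\star)$, hence are all minimizers. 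I would finish by exhibiting one concrete network realizing a zero-sum counting dependency $v_{r_1}+v_{r_2} = v_{r_3}+v_{r_4}$ among the route--incidence rows, drawing on the examples of Appendix \ref{Sec_Examples}.

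The delicate point, and the reason for the qualifier ``in general'', is precisely the extra requirement $\sum_r c_r = 0$: linear dependence of the rows of $I$ only guarantees a kernel direction, which need not be tangent to the feasibility simplex. For instance the rows $(1,0)$, $(0,1)$, $(1,1)$ are dependent, yet the unique kernel direction $(-1,-1,1)$ is transversal to $\{\sum_r c_r = 0\}$; there $\Lambda$ is injective on $S$ and $G$ is single-valued. Thus the sharp statement is that $G$ is multivalued whenever $\ker\Lambda$ meets the tangent space $\{\sum_r c_r = 0\}$ nontrivially (e.g. whenever $\dim\ker\Lambda\ge 2$, or in the explicit zero-sum example), and I expect verifying this transversality, rather than the structural factorization, to be the only real obstacle; in the full-rank case $\Lambda$ is injective on $S$ and the obstruction disappears.
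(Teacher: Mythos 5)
Your proof is correct and follows essentially the same route as the paper's: both arguments perturb an optimal fleet flow with positive entries along a direction $\bold{c}$ in the kernel of the link-route incidence map, using the fact that the objective depends on $\bold{f}$ only through $\Lambda(\bold{f})$ so that the perturbed point remains optimal. The one substantive difference is that you explicitly flag the tangency condition $\sum_r c_r = 0$ needed to preserve the constraint $|\bold{f}| = q^{CRV}$; the paper's proof takes a kernel vector $\bold{a}$ and asserts that $\bold{q^{CRV}} + \varepsilon\bold{a}$ is another minimizer without checking that its total flow is unchanged, which, as your $(1,0),(0,1),(1,1)$ example shows, is not automatic from rank deficiency alone. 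Your version is therefore slightly sharper and makes precise what the qualifier ``in general'' must absorb; the explicit factorization $F = \Phi\circ(\Lambda,\Lambda)$ you invoke is the content of Proposition \ref{Prop_SLF}, so nothing outside the paper's toolkit is needed.
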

\begin{proof}
Suppose there exists a real vector $\bold{a}$ such that $\sum a_r I^{rl} = 0$ for every $l$. Suppose that $\bold{q^{CRV}} \in G(\bold{q^{HDV}})$ has positive entries. Then there exists $\varepsilon > 0$ such that $\bold{q^{CRV}} + \varepsilon \bold{a}$ has positive entries and it yields the same link flow as $\bold{q^{CRV}}$.  
\end{proof}

Next we study the continuity of the forward fleet assignment operators while the continuity of the inverse is established in Theorem \ref{Th_continuity}. We restrict ourselves to the set $|\bold{q^{HDV}}| = const$. We begin by showing an example of lack of continuity resulting in 'switching' the local minimum.

\begin{example}
\label{Ex_switching}
Let there be two equivalent independent routes. For the malicious strategy and $\bold{q^{HDV}} = (25,25)$ we have $G(\bold{q^{HDV}}) = \{(50,0), (0,50)\}$, i.e. there are two minimizers. However, for $\bold{q^{HDV, \varepsilon}} = (25+\varepsilon,25-\varepsilon)$, where $\varepsilon > 0$, the unique minimizer is  $G(\bold{q^{HDV, \varepsilon}}) = \{(50,0)\}$. On the contrary, for $\varepsilon < 0$ we have $G(\bold{q^{HDV, \varepsilon}}) = \{(0,50)\}$. Consequently, $G$ is discontinuous at $(25,25)$. 
\end{example}

\begin{definition}
We say that $\bold{f}$ is a strict local minimizer of $F$ if $$F(\bold{q^{HDV}}, \bold{\tilde{f}}) > F(\bold{q^{HDV}},\bold{f})$$ for $\|\bold{\tilde{f}} - \bold{f}\|$ small enough, where $\|\cdot\|$ is any  finite-dimentional norm (e.g. maximum).
\end{definition}

\begin{proposition}[Continuity of fleet assignment operator]
\label{Prop_continuityFA}
Let $\bold{f}$ be a strict local minimizer of $F$ and let $\bold{q^{HDV,n}}$ be a sequence of HDV flows satisfying $|\bold{q^{HDV,n}}| = |\bold{q^{HDV}}|$ and $\bold{q^{HDV,n}} \to \bold{q^{HDV}}$. Then 
$$G^{loc}\left(\bold{q^{HDV,n}}\right) \to G^{loc}\left(\bold{q^{HDV}}\right) = \{\bold{f}\}$$
as $n \to \infty$, where $G^{loc}$ is the restriction of operator $G$ to a small neighbourhood of $\bold{f}$.  

\end{proposition}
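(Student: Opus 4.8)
The plan is to run the standard \emph{continuity of $\arg\min$ at a strict local minimizer} argument, exploiting three facts: that $F(\bold{q^{HDV}},\cdot)$ is continuous (Remark \ref{Rem_continuity}, which in fact gives joint continuity of $F$ in $(\bold{h},\bold{f})$ as soon as $\bold{t}$ is continuous), that the feasibility constraint $|\bold{f}| = q^{CRV}$ does \emph{not} depend on the HDV flow, and that the feasible set $P := \{\bold{f} \ge \bold{0} : |\bold{f}| = q^{CRV}\}$ is compact in every bounded region. First I would fix the neighbourhood defining $G^{loc}$: since $\bold{f}$ is a \emph{strict} local minimizer, there is $\delta_0 > 0$ such that $\bold{f}$ is the unique minimizer of $F(\bold{q^{HDV}},\cdot)$ on the compact set $K := \{\bold{f'} \in P : \|\bold{f'} - \bold{f}\| \le \delta_0\}$, and I take $G^{loc}(\cdot) := \arg\min_{\bold{f'} \in K} F(\cdot, \bold{f'})$, so that indeed $G^{loc}(\bold{q^{HDV}}) = \{\bold{f}\}$.

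The heart of the argument is a uniform-gap estimate. Fix any target radius $\delta \in (0,\delta_0]$ and consider the compact \emph{shell} $A_\delta := \{\bold{f'} \in K : \|\bold{f'} - \bold{f}\| \ge \delta\}$. Because $\bold{f}$ is the unique minimizer on $K$ and $A_\delta$ is compact and does not contain $\bold{f}$, continuity yields a strictly positive gap $$\mu_\delta := \min_{\bold{f'} \in A_\delta} F(\bold{q^{HDV}}, \bold{f'}) - F(\bold{q^{HDV}}, \bold{f}) > 0.$$ Next I would upgrade the convergence $\bold{q^{HDV,n}} \to \bold{q^{HDV}}$ to uniform convergence of the objectives: since $F$ is jointly continuous and $\big(\{\bold{q^{HDV,n}}\}_n \cup \{\bold{q^{HDV}}\}\big) \times K$ is compact, $F$ is uniformly continuous there, whence $\sup_{\bold{f'} \in K} |F(\bold{q^{HDV,n}}, \bold{f'}) - F(\bold{q^{HDV}}, \bold{f'})| \to 0$. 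Choosing $n$ large enough that this supremum is below $\mu_\delta/3$, and using that $\bold{f}$ is \emph{still feasible} for the perturbed problem, I get $F(\bold{q^{HDV,n}}, \bold{f}) < F(\bold{q^{HDV}}, \bold{f}) + \mu_\delta/3$ while $F(\bold{q^{HDV,n}}, \bold{f'}) > F(\bold{q^{HDV}}, \bold{f}) + 2\mu_\delta/3$ for every $\bold{f'} \in A_\delta$. Hence no minimizer of $F(\bold{q^{HDV,n}},\cdot)$ over $K$ can lie in the shell, so $G^{loc}(\bold{q^{HDV,n}}) \subset \{\bold{f'} \in P : \|\bold{f'}-\bold{f}\| < \delta\}$ for all large $n$; in particular each such minimizer is interior to $K$ and is therefore a genuine local minimizer of the full problem.

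Since $\delta \in (0, \delta_0]$ was arbitrary, this is exactly the assertion $G^{loc}(\bold{q^{HDV,n}}) \to \{\bold{f}\}$, understood as $\sup_{\bold{g} \in G^{loc}(\bold{q^{HDV,n}})} \|\bold{g} - \bold{f}\| \to 0$; nonemptiness of $G^{loc}(\bold{q^{HDV,n}})$ is automatic from continuity of $F$ on the compact set $K$. I expect the main obstacle to be the uniform-gap step rather than the bookkeeping: one must secure both that the gap $\mu_\delta$ is strictly positive — this is precisely where \emph{strictness} of the local minimizer is indispensable, since a non-strict minimizer would permit $\mu_\delta = 0$ and the perturbed minimizer could jump away, exactly the phenomenon in the \enquote{switching} Example \ref{Ex_switching} — and that the perturbation moves the objective uniformly over the entire shell, not merely pointwise. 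A secondary technical point to handle with care is the geometry of $K$ near the relative boundary of the simplex $P$: when some coordinate of $\bold{f}$ vanishes, the neighbourhoods and the notion of \enquote{interior minimizer} must be interpreted relative to $P$, but since $P$ is convex and the constraint $|\bold{f}| = q^{CRV}$ is unaffected by perturbing $\bold{q^{HDV}}$, this introduces no genuine difficulty.
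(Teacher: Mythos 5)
Your proof is correct and is essentially the paper's argument in quantitative form: where you establish a uniform gap $\mu_\delta>0$ over the compact shell and push the objectives uniformly close, the paper extracts a convergent subsequence of minimizers and derives a contradiction from the same gap $\delta = F(\bold{q^{HDV}},\bold{f_0})-F(\bold{q^{HDV}},\bold{f})>0$, relying on exactly the same three ingredients (strictness of the minimizer, compactness of $K$, and continuity of $F$ together with feasibility of $\bold{f}$ for the perturbed problem). There are no gaps; your version merely makes explicit the Hausdorff-type meaning of the set convergence, which the paper leaves implicit.
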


\begin{proof}
Let $K \ni \bold{f}$ be a small closed neighbourhood such that $F(\bold{q^{HDV}}, \bold{\tilde{f}}) > F(\bold{q^{HDV}},\bold{f})$ for $\bold{\tilde{f}} \in K.$ 
Suppose $\bold{f_n} \in G^{loc, K}(\bold{q^{HDV,n}})$. As $K$ is compact there exists a subsequence $\bold{f_{n_k}} \xrightarrow{k \to \infty} \bold{{f_0}} \in K$. Suppose $\bold{f_0} \neq \bold{f}$. Then, $\delta = F(\bold{q^{HDV}}, \bold{f_0}) - F(\bold{q^{HDV}}, \bold{f})$ is strictly positive by the fact that $\bold{f}$ is a strict local minimizer and by continuity of $F$ we have, for $k$ large enough, 
\begin{equation*}
F(\bold{q^{HDV, n_k}}, \bold{f_{n_k}}) > F(\bold{q^{HDV}}, \bold{f_0}) - \frac \delta 2 = F(\bold{q^{HDV}}, \bold{f}) + \frac \delta 2 
\end{equation*}
on the one hand and
\begin{equation*}
F(\bold{q^{HDV, n_k}}, \bold{f}) < F(\bold{q^{HDV}}, \bold{f}) + \frac \delta 2
\end{equation*}
on the other. Hence $$F(\bold{q^{HDV, n_k}}, \bold{f}) < F(\bold{q^{HDV, n_k}}, \bold{f_{n_k}}),$$ which contradicts the fact that $\bold{f_{n_k}}$ is a minimizer. 
\end{proof}

\begin{corollary}
Strict local minimizers are stable. This fails to hold if we drop the assumption of strictness. Globally, if there is more than one strict minimizer for which the objective values coincide then 'switching' of minimizer can occur, see Example \ref{Ex_switching}. Nevertheless, if there is only one strict local minimizer which is the case for strictly convex objectives, the fleet assignment operator $G$ is globally continuous as the set $K$ in the proof of Proposition \ref{Prop_continuityFA} can be taken as the whole feasible set. 
\end{corollary}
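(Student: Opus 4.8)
The plan is to dispatch the four assertions of the corollary one by one, reducing each either to Proposition \ref{Prop_continuityFA} or to an explicit example. The first assertion --- that strict local minimizers are stable --- is merely a restatement of Proposition \ref{Prop_continuityFA}: for a strict local minimizer $\bold{f}$ of $F(\bold{q^{HDV}}, \cdot)$ that proposition already gives $G^{loc}(\bold{q^{HDV,n}}) \to \{\bold{f}\}$ along any mass-preserving sequence $\bold{q^{HDV,n}} \to \bold{q^{HDV}}$, so here I would simply invoke it.

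The substantive part is the global-continuity claim, and for it I would revisit the proof of Proposition \ref{Prop_continuityFA} to isolate the single role played by the small closed neighbourhood $K$: it was chosen only so that $\bold{f}$ is the unique minimizer of $F(\bold{q^{HDV}}, \cdot)$ on $K$, which is exactly what makes $\delta = F(\bold{q^{HDV}}, \bold{f_0}) - F(\bold{q^{HDV}}, \bold{f})$ strictly positive for every competing limit point $\bold{f_0} \ne \bold{f}$. I would then argue that if $\bold{f}$ is the \emph{only} strict local minimizer on the whole feasible set $S := \{\bold{f} \in [0,\infty)^R : |\bold{f}| = q^{CRV}\}$, one may take $K = S$. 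Two observations drive this: $S$ is compact (closed and bounded), so the subsequence extraction $\bold{f_{n_k}} \to \bold{f_0} \in S$ survives verbatim; and the strictness forces $F(\bold{q^{HDV}}, \bold{f_0}) > F(\bold{q^{HDV}}, \bold{f})$ for every $\bold{f_0} \in S \setminus \{\bold{f}\}$, so $\delta > 0$ as before. The two-sided estimate on $F(\bold{q^{HDV,n_k}}, \cdot)$ and the resulting contradiction then carry over unchanged, yielding $G(\bold{q^{HDV,n}}) \to \{\bold{f}\}$ globally.

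To land the strictly convex case I would check the hypothesis directly: when $F(\bold{q^{HDV}}, \cdot)$ is strictly convex on the convex set $S$ (convexity being Proposition \ref{Prop_convexity}), a minimizer exists by compactness and is unique, and strict convexity makes it a strict global --- hence the only strict local --- minimizer, so the preceding paragraph applies with $K = S$. For the two negative assertions I would argue by example. The loss of stability once strictness is dropped I would exhibit through a flat objective, i.e. a non-strict local minimizer lying in a plateau of equal-value feasible points that an arbitrarily small perturbation of $\bold{q^{HDV}}$ tilts so that the minimizer slides to an endpoint; Examples \ref{ex_social} and \ref{ex_altruistic} already display precisely this degeneracy. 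The global switching phenomenon I would attribute directly to Example \ref{Ex_switching}, where two strict minimizers share the optimal value and the selected one jumps as $\bold{q^{HDV}}$ crosses the symmetric configuration.

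The step I expect to be the main obstacle is the implication from \enquote{unique strict local minimizer} to the global separation $F(\bold{q^{HDV}}, \bold{f_0}) > F(\bold{q^{HDV}}, \bold{f})$ for all $\bold{f_0} \ne \bold{f}$ that enlarging $K$ to $S$ requires. A priori a unique strict local minimizer need not be the global one, and the global minimizer need not be strict, so the clean implication genuinely rests on strict convexity, under which local minima are global and the minimizer is automatically strict. I would therefore phrase the global-continuity conclusion for strictly convex objectives, as the corollary does, and note that the bare \enquote{single strict local minimizer} hypothesis needs this extra care.
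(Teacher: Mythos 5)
Your proposal matches the paper's own (implicit) justification: the paper proves this corollary only by the remark embedded in its statement, namely that $K$ in the proof of Proposition \ref{Prop_continuityFA} can be enlarged to the whole compact feasible set, exactly as you carry out, with the negative assertions delegated to Example \ref{Ex_switching}. Your closing caveat --- that a unique strict local minimizer need not a priori dominate non-strict global minimizers, so the global-continuity claim genuinely rests on strict convexity --- is a correct and worthwhile sharpening of the paper's looser phrasing.
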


\subsection{'Real-world' assignment by fleet controller}
\label{Sec_realworld}
Real-world assignment of fleet vehicles is a discrete rather than continuous optimization problem. Below, we discuss the relation of the two approaches. A {\bf 'continuous' real-world fleet controller} would likely:
\begin{itemize}
\item Assign the vehicles according to the unique minimizer in the case of {\bf strictly convex} objective function (e.g. selfish, social), which can be identified by Frank-Wolfe or similar algorithms. 
\item Assign the vehicles using one of the finite number of local minimizers in the case of {\bf strictly concave} objective function (e.g. malicious). The minimizers can be found by inspecting the corners of the feasible set and can exhibit the same (by chance) or different objective values. In the latter case one may assume (although it is not necessary for further exposition) that the best local minimizer, i.e. the global minimizer is selected. 
\item Assign the vehicles using: brute force (in the case of small systems), metaheuristics \cite{Metaheuristics} or other algorithms in the case of {\bf non-convex and non-concave objectives}. The identified assignment will typically be a local minimum (perhaps non-strict) of the objective function, which is what we assume in this paper.  
\end{itemize}

\noindent A real-life {\bf 'discrete' fleet controller} could:
\begin{itemize}
\item Take (one of) the identified 'continuous' local minimizers and round up the flows. This approach is reasonable when the objective function is Lipschitz continuous with a not-too-large Lipschitz constant. Then the assignment will result in near optimal objective (whatever the optimal discrete objective may be). 

\item Find the actual discrete minimum over the discrete space. This approach results potentially in a better objective function value but is computationally very expensive and possibly prohibitive.
\end{itemize}

In this paper we assume that the fleet operator/controller uses the first approach as a computationally reasonable alternative, leaving the other option to further research.  The first approach allows us to assume that the actual discrete flow used is close to a continuous minimizer, see Section \ref{Sec_discrete}.  On the other hand, for the second approach (fleet controller using global discrete minimizers) we note that 
\begin{itemize}
\item The discrete global minimizer(s) does not necessarily lie close to the global continuous minimizer even in the case of strictly convex objective function, unless the objective is uniformly convex. 
\item The discrete global minimizers(s) does not have to lie close to any of the continuous local minimizers, even in the case of strictly concave objectives.
\item Even if the continuous minimizer is unique, the discrete minimizer does not have to be unique. 
\end{itemize}

Finally, we note that even if the fleet controller/operator assigns vehicles using a local (and not global) minimizer, the inverse theory still can be applied, see Remark \ref{rem_localmin}. 

\section{Inverse fleet assignment theorem and related results}
\label{Sec_Inverse2}
In this section we formulate and prove the main technical results of this paper, i.e. the theorem on inverse fleet assignment along with other auxiliary results.
\subsection{Inverse fleet assignment theorem}
\begin{definition}
\label{def_posdef}
A square real matrix $M \in \mathbb{R}^{N \times N}$ is \emph{positive definite} if for every real vector $\bold{v} \in \mathbb{R}^N$, $\bold{v} \neq 0$ we have $\bold{v}^T M \bold{v} >0$. 
\end{definition}
Note that in Definition \ref{def_posdef} we do not necessarily assume that the matrix $M$ is symmetric. As, however, $\bold{v}^T M \bold{v} = \bold{v}^T M^T \bold{v}$ for any square matrix $M$, we find that the value of the quadratic form depends only on the symmetric part of $M$, i.e. $\bold{v}^T M \bold{v} = \bold{v}^T ((M + M^T)/2) \bold{v}$. 

\begin{theorem}[Inverse of fleet assignment]
\label{th_inverse}
Let $\bold{t} = (t_1(\bold{q}), t_2(\bold{q})), \dots, t_R(\bold{q}))^T$ be continuously differentiable delay functions (travel times) of routes $1,\dots, R$, dependent on the flow of all vehicles $\bold{q} = (q_1, \dots, q_R)$ and let $q^{CRV}$ be the total flow of fleet vehicles (i.e. fleet size).  
If $\lambda^{HDV} <  \lambda^{CRV}$ then for every $\bold{q}$ such that the gradient matrix $\nabla \bold{t}(\bold{q})$ is positive definite there exists at most one $\bold{q^{CRV}}$ such that $\bold{q^{CRV}} \in G(\bold{q - q^{CRV}})$. In other words, for $\lambda^{HDV} <  \lambda^{CRV}$ and positive definite $\bold{\nabla t(q)}$ there exists at most one couple $(\bold{q^{HDV}}, \bold{q^{CRV}})$ such that $\bold{q^{HDV}} + \bold{q^{CRV}} = \bold{q}$ and $\bold{q^{CRV}}$ optimizes the fleet objective, which is furthermore equivalent to $H^{-1}(\bold{q})$ being an at most one-element set. 
\end{theorem}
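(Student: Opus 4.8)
The plan is to prove uniqueness by a monotonicity (variational inequality) argument that never uses convexity of $F$, so that it covers every local minimizer that $G$ may return. Suppose $\bold{q}$ admits two admissible decompositions $\bold{q} = \bold{h_1} + \bold{f_1} = \bold{h_2} + \bold{f_2}$ with $\bold{f_i} \in G(\bold{h_i})$, $\bold{f_i} \ge 0$ and $|\bold{f_i}| = q^{CRV}$; the goal is to deduce $\bold{f_1} = \bold{f_2}$. The decisive structural observation is that both decompositions share the \emph{same} total flow $\bold{q}$, so $\bold{t}(\bold{q})$ and the Jacobian $J := \nabla\bold{t}(\bold{q})$ are literally identical at the two points.

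First I would record the first-order necessary condition for a local minimizer over the feasible set $S = \{\bold{f} \ge 0 : |\bold{f}| = q^{CRV}\}$, which is convex by Proposition \ref{Prop_convexity}. For a local minimizer $\bold{f_i}$ and any competitor $\bold{f} \in S$, convexity of $S$ allows one to move along the segment $\bold{f_i} + \epsilon(\bold{f} - \bold{f_i})$ and differentiate the one-sided inequality at $\epsilon = 0^+$, yielding the variational inequality $\nabla_{\bold{f}} F(\bold{h_i}, \bold{f_i}) \cdot (\bold{f} - \bold{f_i}) \ge 0$. This step uses only a local minimum and no convexity of $F$, which is exactly why the result can be stated for the possibly non-convex objectives that $G$ handles.

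Next I would compute the gradient on the fibre $\bold{h} + \bold{f} = \bold{q}$. Writing $\lambda^{HDV}\bold{h} + \lambda^{CRV}\bold{f} = \lambda^{HDV}\bold{q} + (\lambda^{CRV} - \lambda^{HDV})\bold{f}$, one obtains
$$\nabla_{\bold{f}} F(\bold{h}, \bold{f}) = \lambda^{CRV}\bold{t}(\bold{q}) + J^T\!\left(\lambda^{HDV}\bold{q} + (\lambda^{CRV}-\lambda^{HDV})\bold{f}\right),$$
so the only part depending on the decomposition is $(\lambda^{CRV}-\lambda^{HDV})\,J^T\bold{f}$. Plugging $\bold{f} = \bold{f_2}$ into the inequality for $\bold{f_1}$ and $\bold{f} = \bold{f_1}$ into the inequality for $\bold{f_2}$ and adding, every term that is common to both decompositions cancels, and with $\bold{w} := \bold{f_2} - \bold{f_1}$ one is left with $-(\lambda^{CRV}-\lambda^{HDV})\,\bold{w}^T J \bold{w} \ge 0$, using $(J^T\bold{w})\cdot\bold{w} = \bold{w}^T J \bold{w}$.

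Finally I would close with the two hypotheses: $\lambda^{CRV} - \lambda^{HDV} > 0$ forces $\bold{w}^T J \bold{w} \le 0$, whereas positive definiteness of $J = \nabla\bold{t}(\bold{q})$ gives $\bold{w}^T J \bold{w} > 0$ unless $\bold{w} = 0$ (recall from the remark after Definition \ref{def_posdef} that only the symmetric part of $J$ enters). Hence $\bold{w} = 0$, so $\bold{f_1} = \bold{f_2}$ and $\bold{h_1} = \bold{h_2}$; since $\bold{q^{CRV}} \mapsto \bold{q^{HDV}} = \bold{q} - \bold{q^{CRV}}$ is a bijection, this is precisely the statement that $H^{-1}(\bold{q})$ is at most a one-element set. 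The main obstacle I anticipate is purely in the setup rather than the algebra: carefully justifying the variational inequality at boundary minimizers of $S$ (where the non-negativity constraints are active) and confirming that the whole argument is genuinely independent of whether $F$ is convex. Once the variational inequality is in place, the cancellation and the positive-definiteness step are routine.
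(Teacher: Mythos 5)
Your proposal is correct and follows essentially the same route as the paper's own proof: both derive the first-order variational inequality at each of the two hypothetical minimizers along the feasible directions $\pm(\bold{f_2}-\bold{f_1})$, exploit that $\bold{t}(\bold{q})$ and $\nabla\bold{t}(\bold{q})$ are evaluated at the common total flow so all shared terms cancel upon adding, and conclude $-(\lambda^{CRV}-\lambda^{HDV})\,\bold{w}^T\nabla\bold{t}(\bold{q})\,\bold{w}\ge 0$, which contradicts positive definiteness unless $\bold{w}=\bold{0}$. The setup concern you flag (justifying the inequality at boundary points of $S$) is handled exactly as you anticipate, via one-sided derivatives along feasible directions within the convex set $S$.
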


\begin{proof}
The optimization by fleet can be expressed as
\begin{equation}
\label{eq_qcav}
\bold{q^{CRV}} \in \arg \min_{\bold{f}} \left\{F(\bold{q^{HDV}},\bold{f}): |\bold{f}| = q^{CRV}\right\}.
\end{equation} 
Now, given a total vehicle assignment $\bold{q}$, fleet size $q^{CRV}$ and fleet strategy we will show that inclusion
\begin{equation}
\label{eq_qCAV}
\bold{q^{CRV}} \in \arg \min_{\bold{f}} \left\{F(\bold{q} - \bold{q^{CRV}}, \bold{f}): |\bold{f}| = q^{CRV} \right\}
\end{equation}
has at most one solution $\bold{q^{CRV}}$ , i.e. $\bold{q}$ can arise from at most one couple $(\bold{q^{HDV}}, \bold{q^{CRV}})$ conditional on fleet strategy and size. 

Suppose first that $\arg \min_{\bold{f}} \left\{F(\bold{q} - \bold{f^*}, \bold{f}): |\bold{f}| = q^{CRV} \right\} = \bold{f^*}$. Then for every feasible direction  $\bold{g}$, i.e. $\bold{g}$ such that $\bold{f}^* + \delta \bold{g}$ has non-negative entries for $\delta$ small enough (viable flow) and $|\bold{g}| = 0$ (so that $|\bold{f}^* + \delta \bold{g}| = q^{CRV}$) we have
\begin{equation*}
\frac {d^+} {d\delta} F(\bold{q} - \bold{f^*}, \bold{f^*} + \delta \bold{g})|_{\delta = 0} \ge 0.
\end{equation*}
Computing the derivative we obtain 
\begin{equation*}
\lambda^{CRV} \bold{g}\cdot \bold{t(q)} + \left(\lambda^{HDV}(\bold{\bold{q} - \bold{f^*}}) + \lambda^{CRV}\bold{f^*}\right) \cdot (\nabla \bold{t}(\bold{q}) {\bold{g}})  \ge 0,
\end{equation*}
where $\bold{\nabla t}$ is the gradient matrix and $\nabla \bold{t}(\bold{q}) {\bold{g}} = \sum_{r=1}^R  (\partial_r \bold{t}(\bold{q})) g^r$ is the derivative of $\bold{t}$ in direction $\bold{g}$ evaluated at $\bold{q}$. Rearranging we obtain
\begin{equation}
\label{eq_der1}
\lambda^{CRV} \bold{g}\cdot \bold{t(q)} + \left((\lambda^{CRV}- \lambda^{HDV}) \bold{f^*} + \lambda^{HDV}\bold{q}\right) \cdot (\nabla \bold{t}(\bold{q}) {\bold{g}})  \ge 0. 
\end{equation}
Suppose now that $\bold{\tilde{f}} = \bold{f^*} + \bold{g}$ is another solution of \eqref{eq_qCAV}. Then the directional derivative of $F$ at $\bold{\tilde{f}}$ in a feasible direction $\bold{\tilde{g}}$ is non-negative and hence 
\begin{equation}
\label{eq_der2}
\lambda^{CRV} \bold{\tilde{g}}\cdot \bold{t(q)} + \left((\lambda^{CRV}- \lambda^{HDV}) \bold{\tilde{f}} + \lambda^{HDV}\bold{q}\right) \cdot (\nabla \bold{t}(\bold{q}) {\bold{\tilde{g}}})  \ge 0. 
\end{equation}
Setting $\bold{\tilde{g}} = -\bold{g}$ and adding \eqref{eq_der1} and \eqref{eq_der2} we obtain 
\begin{equation*}
(\lambda^{CRV} - \lambda^{HDV}) (\bold{f^*} - \bold{\tilde{f}})\cdot (\nabla \bold{t}(\bold{q}) {\bold{g}}) \ge 0.
\end{equation*}
Noting that $\bold{g} = \bold{\tilde{f}} - \bold{f^*}$ is a feasible direction in \eqref{eq_der1} and  $\bold{\tilde{g}} = -\bold{g}$ is feasible in \eqref{eq_der2} we obtain
\begin{equation}
\label{eq_last}
-(\lambda^{CRV} - \lambda^{HDV}) \bold{g}\cdot (\nabla \bold{t}(\bold{q}) {\bold{g}}) \ge 0.
\end{equation}
Consequently, if $\bold{\nabla t}(\bold{q})$ is positive definite and $\lambda^{CRV} > \lambda^{HDV}$, the solution of \eqref{eq_qCAV}, if it exists, is unique. 

\end{proof}

\begin{corollary}
Theorem \ref{th_inverse} implies that for every fleet strategy, which is more (own group) 'selfish' than 'altruistic' (i.e. e.g. for selfish, malicious, disruptive) the fleet assignment can be recovered from the assignment of all vehicles and the size of the fleet. On the other hand, e.g. for social ($\lambda^{HDV} = \lambda^{CRV} = 1$) or altruistic ($\lambda^{HDV} = 1$, $\lambda^{CRV} = 0$) this is not possible. 
\end{corollary}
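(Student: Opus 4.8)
The plan is to derive the Corollary as a direct specialization of Theorem \ref{th_inverse}, handling its two assertions separately. For the positive assertion I would first translate the informal phrase ``more (own group) selfish than altruistic'' into the precise algebraic condition $\lambda^{CRV} > \lambda^{HDV}$, which is exactly the hypothesis of Theorem \ref{th_inverse}. I would then verify this inequality for each named behaviour using the parameter values fixed in Section \ref{Sec_FOF}: selfish $(\lambda^{HDV},\lambda^{CRV}) = (0,1)$, malicious $(-1,0)$, and disruptive $(-1,1)$ each satisfy $\lambda^{CRV} > \lambda^{HDV}$. This reduces the positive claim to a bookkeeping check against the theorem's hypothesis.

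With the hypothesis confirmed, Theorem \ref{th_inverse} yields that, at any $\bold{q}$ where $\nabla\bold{t}(\bold{q})$ is positive definite, the preimage $H^{-1}(\bold{q})$ contains at most one element. The remaining work is to spell out why this \emph{is} the statement that the fleet assignment can be recovered: since the observed total flow $\bold{q}$ actually arose from some realized couple $(\bold{q^{HDV}}, \bold{q^{CRV}})$, existence of a preimage is automatic, and uniqueness then forces this couple --- and in particular $\bold{q^{CRV}}$ --- to be the only decomposition consistent with the known fleet size $q^{CRV}$ and strategy. Hence the observer recovers $\bold{q^{CRV}}$ unambiguously.

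For the negative assertion I would argue by exhibiting the obstruction rather than by deriving uniqueness. For the social fleet $\lambda^{HDV} = \lambda^{CRV} = 1$ the strict inequality degenerates to equality, and for the altruistic fleet $(\lambda^{HDV},\lambda^{CRV}) = (1,0)$ it is reversed, so Theorem \ref{th_inverse} provides no guarantee in either case. Crucially, Examples \ref{ex_social} and \ref{ex_altruistic} already construct explicit total flows $\bold{q}$ arising from two distinct admissible couples, so $H^{-1}(\bold{q})$ is genuinely multi-valued there; invoking these examples upgrades ``the theorem does not apply'' into the stronger conclusion ``recovery provably fails.''

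The main delicate point is conceptual rather than computational: the careful handling of the quantifiers and of the existence-versus-uniqueness distinction. Theorem \ref{th_inverse} delivers only \emph{uniqueness} of the preimage, and only at points where $\nabla\bold{t}(\bold{q})$ is positive definite, so I must state explicitly that (i) existence of the realized couple is supplied for free by the physical setup, and (ii) the recovery claim is implicitly conditioned on the standing positive-definiteness assumption on $\nabla\bold{t}(\bold{q})$. The negative assertion requires no such caution, since a single counterexample refutes recoverability, but I would remark that its failure stems from the violated $\lambda$-inequality and not from any deficiency of the delay functions $\bold{t}$.
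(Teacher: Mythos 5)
Your proposal is correct and follows exactly the route the paper intends: the positive assertion is the hypothesis check $\lambda^{CRV}>\lambda^{HDV}$ for selfish $(0,1)$, malicious $(-1,0)$ and disruptive $(-1,1)$ combined with the at-most-one-preimage conclusion of Theorem \ref{th_inverse}, and the negative assertion rests on the explicit non-uniqueness constructed in Examples \ref{ex_social} and \ref{ex_altruistic}. Your added care about existence of the realized couple and the standing positive-definiteness assumption is consistent with the paper and does not change the argument.
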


\begin{remark}
\label{rem_feasibledir}
In Theorem \ref{th_inverse} it suffices to assume that $\bold{\nabla t(q)}$ is positive definite only with respect to feasible directions, i.e. $\bold{g}\cdot (\nabla \bold{t}(\bold{q}) {\bold{g}})>0$ for every $\bold{g} \neq \bold{0}$ such that $|\bold{g}|=\bold{0}$. 
\end{remark}

\begin{remark}
\label{rem_localmin}
Theorem \ref{th_inverse} assumes that the fleet controller is able to assign the flows on routes such that the result minimizes the objective function, i.e. for given (expected by the fleet controller) HDV assignment $\bold{q^{HDV}}$ it can identify at least one $\bold{q^{CRV}}$ such that \eqref{eq_qcav} is satisfied. The proof of the theorem, however, requires only that $\bold{f}^*$ and $\bold{\tilde{f}}$ be {\bf local} minimizers. Consequently, the theorem still holds true for fleet controllers which use algorithms that generate assignments minimizing the objective locally. Such algorithms (e.g. gradient methods) are easily available. In contrast, global optimization techniques often use metaheuristics \cite{Metaheuristics} or approximation algorithms \cite{ApproxAlgos} and typically their convergence to the global minimum cannot be guaranteed yet they often find a local minimum. For instance, for the malicious strategy (with concave objective function), among assignments which put all fleet vehicles on a route with non-zero HDV flow there may be multiple local minimizers. If the fleet uses an algorithm that selects a route to which it assigns all vehicles resulting in a local (yet not necessarily global) minimization, this assignment can still be detected. 
\end{remark}

\begin{theorem}[Continuity of the inverse fleet assignment operator]
\label{Th_continuity}
Let $\bold{t}$ be twice continuously differentiable, $\nabla \bold{t}$ be positive definite and  $\lambda^{HDV} < \lambda^{CRV}$. Fix the total flow $q$. Then the inverse fleet assignment operators $I - H^{-1} : \bold{q}\mapsto \bold{q^{CRV}}$ as well as $H^{-1}: \bold{q}\mapsto \bold{q^{HDV}}$ are continuous.  
\end{theorem}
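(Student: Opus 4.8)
The plan is to establish sequential continuity of the map $\bold{q}\mapsto\bold{q^{CRV}} = (I - H^{-1})(\bold{q})$; the continuity of $H^{-1}:\bold{q}\mapsto\bold{q^{HDV}}$ then follows immediately, since $H^{-1}(\bold{q}) = \bold{q} - (I-H^{-1})(\bold{q})$ is a difference of continuous maps. By Theorem \ref{th_inverse} the hypotheses guarantee that $\bold{q^{CRV}}$ is the \emph{unique} fleet flow compatible with $\bold{q}$, so the operator is single-valued on its domain and it is meaningful to ask for continuity. I would fix $\bold{q}$ in the domain with total flow $q$, let $\bold{q^{CRV}}$ be its unique inverse, and take any sequence $\bold{q^n}\to\bold{q}$ inside the domain with $|\bold{q^n}| = q$. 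Writing $\bold{f^n} := (I-H^{-1})(\bold{q^n})$, all $\bold{f^n}$ lie in the compact simplex $\{\bold{f}\ge \bold{0}: |\bold{f}| = q^{CRV}\}$, so after passing to a subsequence I may assume $\bold{f^n}\to\bold{f_0}$ for some $\bold{f_0}$ in this simplex. It then suffices to prove $\bold{f_0} = \bold{q^{CRV}}$: since every convergent subsequence would have the same limit, compactness forces $\bold{f^n}\to\bold{q^{CRV}}$ for the whole sequence.

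The crux is to pass the first-order optimality condition to the limit. Each $\bold{f^n}$ is a (local) minimizer for $\bold{q^n}$, hence satisfies the stationarity inequality \eqref{eq_der1} written at $(\bold{q^n}, \bold{f^n})$ for every direction $\bold{g}$ feasible at $\bold{f^n}$. I would fix a direction $\bold{g}$ feasible at $\bold{f_0}$, i.e. $|\bold{g}| = 0$ and $g_r \ge 0$ whenever $(f_0)_r = 0$, and verify that $\bold{g}$ is also feasible at $\bold{f^n}$ for all large $n$: indeed, for every coordinate $r$ with $(f_0)_r > 0$ one has $(f^n)_r>0$ eventually, so the active set of $\bold{f^n}$ is eventually contained in that of $\bold{f_0}$, and the sign constraint $g_r\ge 0$ required at $\bold{f^n}$ is implied by the one imposed at $\bold{f_0}$. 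Consequently \eqref{eq_der1} holds at $(\bold{q^n},\bold{f^n})$ for this $\bold{g}$, and letting $n\to\infty$, using $\bold{q^n}\to\bold{q}$, $\bold{f^n}\to\bold{f_0}$ and the continuity of $\bold{t}$ and $\nabla\bold{t}$ (guaranteed since $\bold{t}\in C^2$), I obtain that \eqref{eq_der1} holds at $(\bold{q},\bold{f_0})$ for every feasible direction $\bold{g}$. Thus $\bold{f_0}$ satisfies the same first-order optimality condition at $\bold{q}$ as the genuine inverse $\bold{q^{CRV}}$.

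Finally, I would invoke the uniqueness mechanism of Theorem \ref{th_inverse}, noting that its proof uses only the first-order inequality \eqref{eq_der1} and not any stronger minimality. Both $\bold{q^{CRV}}$ and $\bold{f_0}$ satisfy \eqref{eq_der1} at $\bold{q}$, and the segment joining them lies in the feasible set by Proposition \ref{Prop_convexity}, so $\bold{g}:=\bold{f_0}-\bold{q^{CRV}}$ is feasible at $\bold{q^{CRV}}$ and $-\bold{g}$ is feasible at $\bold{f_0}$. Adding the two instances of \eqref{eq_der1} exactly as in the derivation of \eqref{eq_last} yields $-(\lambda^{CRV}-\lambda^{HDV})\,\bold{g}\cdot(\nabla\bold{t}(\bold{q})\bold{g})\ge 0$; since $\lambda^{CRV}>\lambda^{HDV}$ and $\nabla\bold{t}(\bold{q})$ is positive definite with respect to feasible directions (cf. Remark \ref{rem_feasibledir}), this forces $\bold{g}=\bold{0}$, i.e. $\bold{f_0}=\bold{q^{CRV}}$, completing the argument.

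I expect the main obstacle to be the limiting step for the optimality condition: establishing that directions feasible at the limit remain feasible along the sequence (equivalently, upper semicontinuity of the active set), so that \eqref{eq_der1} genuinely survives the passage to the limit. A secondary point requiring care is the observation that Theorem \ref{th_inverse} needs only stationarity, not full local minimality; this is precisely what lets me conclude $\bold{f_0}=\bold{q^{CRV}}$ without first proving that a subsequential limit of local minimizers is itself a minimizer, which need not hold in general.
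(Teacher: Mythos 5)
Your argument is correct, but it takes a genuinely different route from the paper. You prove \emph{sequential} continuity by a soft compactness argument: extract a convergent subsequence $\bold{f^n}\to\bold{f_0}$ in the simplex, pass the stationarity inequality \eqref{eq_der1} to the limit (your treatment of the active-set upper semicontinuity, ensuring directions feasible at $\bold{f_0}$ are eventually feasible at $\bold{f^n}$, is the right technical point and is handled correctly), and then identify $\bold{f_0}=\bold{q^{CRV}}$ by re-running the uniqueness mechanism of Theorem \ref{th_inverse} --- correctly observing that only stationarity, not minimality, is needed there. The paper instead proves the stronger statement of \emph{Lipschitz} continuity directly: it writes \eqref{eq_der1} at two arbitrary points $(\bold{q^*},\bold{f^*})$ and $(\bold{q^\#},\bold{f^\#})$ with $\bold{g}=\bold{f^\#}-\bold{f^*}$, adds the inequalities, and bounds the five resulting terms using the $C^2$ regularity of $\bold{t}$ and the minimal eigenvalue $\rho(\nabla\bold{t})$ over the compact slice $|\bold{q}|=q$, arriving at the explicit estimate \eqref{eq_LipC} with constant $K/\bigl((\lambda^{CRV}-\lambda^{HDV})\rho(\nabla\bold{t})\bigr)$. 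What each approach buys: yours is more economical (it would in fact go through assuming only $\bold{t}\in C^1$, and it avoids any eigenvalue or operator-norm bookkeeping), but it yields no modulus of continuity; the paper's quantitative Lipschitz constant is what is actually invoked later, in the discrete-case proposition of Section \ref{Sec_discrete}, where $\mathrm{Lip}(H^{-1})$ appears explicitly. So your proof establishes the theorem as literally stated, but would not by itself support that downstream application.
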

\begin{proof}
Suppose $\bold{f}^* \in (I - H^{-1}) \bold{q}^*$ and $\bold{f}^\# \in (I - H^{-1}) \bold{q}^\#$ sastisfy $|\bold{f}^*| = |\bold{f}^\#| = q^{CRV}$ for $|\bold{q}^*| = |\bold{q}^\#| = q$. Then, denoting $L:= \lambda^{CRV} - \lambda^{HDV}$ we obtain, by 
\eqref{eq_der1}, 
\begin{equation*}
\lambda^{CRV} \bold{g^*}\cdot \bold{t(q^*)} + \left(L \bold{f^*} + \lambda^{HDV}\bold{q^*}\right) \cdot (\nabla \bold{t}(\bold{q^*}) {\bold{g^*}})  \ge 0 
\end{equation*}
for every feasible direction $\bold{g}^*$. Rearranging this inequality and writing a similar one for $\bold{q}^\#$ and feasible direction $\bold{g}^\#$ we obtain
\begin{equation*}
\begin{cases}
-L\bold{f}^* \cdot (\nabla \bold{t}(\bold{q}^*)  \bold{g^*}) &\le \lambda^{CRV} \bold{g^*} \cdot \bold{t}(\bold{q}^*) + \lambda^{HDV} \bold{q}^* \cdot \nabla \bold{t}(\bold{q}^*) \bold{g^*}\\
-L\bold{f}^\# \cdot (\nabla \bold{t}(\bold{q}^\#)  \bold{g^\#}) &\le \lambda^{CRV} \bold{g^\#} \cdot \bold{t}(\bold{q}^\#) + \lambda^{HDV} \bold{q}^\# \cdot \nabla \bold{t}(\bold{q}^\#) \bold{g^\#}
\end{cases}
\end{equation*}
Setting $\bold{g}:= -\bold{g}^\# = \bold{g}^* = \bold{f}^\# - \bold{f}^*$ and adding the above inequalities we obtain
\begin{eqnarray*}
L\bold{g} \cdot \nabla \bold{t}(\bold{q}^*) \bold{g} + L\bold{f}^\# \cdot (\nabla \bold{t}(\bold{q}^\#) - \nabla \bold{t}(\bold{q}^*)) \bold{g} &\le&  \lambda^{CRV} \bold{g}(\bold{t}(\bold{q}^*) - \bold{t}(\bold{q}^\#))\\ +  \lambda^{HDV} \bold{q}^*(\nabla \bold{t}(\bold{q}^*) - \nabla \bold{t}(\bold{q}^\#))\bold{g} &+& \lambda^{HDV} (\bold{q}^* - \bold{q}^\#) \nabla \bold{t}(\bold{q}^\#) \bold {g}
\end{eqnarray*}
with the five terms denoted by $A, B, C, D, E$ and the inequality assuming the form $A + B \le C + D + E$. Assuming that the delay function is twice continuously differentiable, we may thus estimate
\begin{eqnarray*}
L\rho(\nabla \bold{t}) \|\bold{g}\|_2 ^2 &\le& L\bold{g}^* \cdot \nabla \bold{t}(\bold{q}^*) \bold{g^*} = A,\\
|B| = |L\bold{f}^\# \cdot (\nabla \bold{t}(\bold{q}^\#) - \nabla \bold{t}(\bold{q}^*)) \bold{g}| &\le& L \|\bold{f}^\#\|_2 \|\nabla^2 \bold{t}\|_{2,\infty} \|\bold{q}^* - \bold{q}^\#\|_2 \|\bold{g}\|_2, \\
|C| = |\lambda^{CRV} \bold{g}(\bold{t}(\bold{q}^*) - \bold{t}(\bold{q}^\#))| &\le& |\lambda^{CRV}| \|\bold{g}\|_2 \|\nabla \bold{t}\|_{2,\infty} \|\bold{q}^* - \bold{q}^\#\|_2, \\
|D| = |\lambda^{HDV} \bold{q}^*(\nabla \bold{t}(\bold{q}^*) - \nabla \bold{t}(\bold{q}^\#))\bold{g}| &\le& |\lambda^{HDV}|   \|\bold{q}^*\|_2 \|\nabla^2 \bold{t}\|_{2,\infty} \|\bold{q}^* - \bold{q}^\#\|_2 \|\bold{g}\|_2, \\
|E| = |\lambda^{HDV} (\bold{q}^* - \bold{q}^\#) \nabla \bold{t}(\bold{q}^\#) \bold {g}| &\le& |\lambda^{HDV}| \|\bold{q}^* - \bold{q}^\#\|_2 \|\nabla \bold{t}\|_{2,\infty} \|\bold{g}\|_2, 
\end{eqnarray*}
where 
\begin{eqnarray*}
\|\nabla \bold{t}\|_{2,\infty} &=& \sup_{|\bold{q}| = q} \|\nabla \bold{t(\bold{q})}\|_2,  \\
\|\nabla^2 \bold{t}\|_{2,\infty} &=& \sup_{|\bold{q}| = q} \|\nabla^2 \bold{t(\bold{q})}\|_2,
\end{eqnarray*}
$\|v\|_2 = (\sum_i v_i^2)^{1/2}$ is the euclidean norm of a vector and $\|\nabla \bold{t(\bold{q})}\|_2$, $\|\nabla^2 \bold{t(\bold{q})}\|_2$ are the second (spectral) norms of operators $\nabla \bold{t(\bold{q})}$ and $\nabla^2 \bold{t(\bold{q})}$, respectively. Furthermore, $\rho(\nabla \bold{t})$ is the smallest eigenvalue of $\nabla \bold{t}(\bold{q})$ minimized over the compact set $|\bold{q}| = q$. Hence,
\begin{equation*}
L\rho(\nabla \bold{t}) \|\bold{g}\|_2 ^2 \le 
\left[(L\|\bold{f}^\#\|_2 + |\lambda^{HDV}|\|\bold{q}^*\|_2)\|\nabla^2 \bold{t}\|_{2,\infty} + (|\lambda^{CRV}| + |\lambda^{HDV}|)\|\nabla \bold{t}\|_{2,\infty}\right] \|\bold{q}^* - \bold{q}^\#\|_2 \|\bold{g}\|_2  
\end{equation*}
and, consequently,
\begin{equation}
\label{eq_LipC}
\|\bold{f^*} - \bold{f}^\#\|_2 \le \frac K {(\lambda^{CRV} - \lambda^{HDV})\rho(\nabla \bold{t})} \|\bold{q^*} - \bold{q^\#}\|_2
\end{equation}
for some constant $K$ dependent on $\|\nabla \bold{t}\|_{2,\infty}$, $\|\nabla^2 \bold{t}\|_{2,\infty}$, $q^{CRV}$, $q$ and number of routes $R$.

\end{proof}

\begin{remark}
The inverse operator may be continuous even though the forward operator is discontinuous, as is the case for the malicious strategy, see Example \ref{Ex_switching}. 
\end{remark}

Next, we demonstrate that if the route travel times are sums of link travel times given by BPR functions then $\bold{\nabla t}$ is essentially always positive definite. Let us begin with two definitions, which slightly generalize Definition  \ref{def_linkassignoper} to systems where link-additivity holds only on certain routes and not necessarily globally.

\begin{definition}
\label{def_linkadditive}
We say that a travel system is \emph{link additive} with respect to routes $1,\dots, R$ if the route delay functions from the travel time vector $\bold{t} = (t_1(\bold{q}), t_2(\bold{q})), \dots, t_R(\bold{q}))^T$ can be expressed as 
\begin{equation*}
t_r(\bold{q}) = \sum_{\alpha} I^{r\alpha} \tau_\alpha\left(\Lambda(\bold{q})_\alpha\right),
\end{equation*}
where $\tau_\alpha$ are continuously differentiable link-delay functions, i.e.  route travel times are sums of link travel times on links making up a route and every link travel time depends only on the total flow on this link and includes background traffic.\end{definition}

\begin{definition}
Routes $r_1,\dots, r_R$ in a link-additive system are \emph{linearly dependent} if there exists a vector $\bold{v} \neq \bold{0}$ in $\mathbb{R}^R$ such that  $\sum_{r=1}^R v_r r_r = 0$, which means that for every link $\alpha$ in the network $\sum_{r=1}^R v_r I^{r\alpha} = 0.$
\end{definition}
\begin{proposition}
\label{Prop_SLF}
In a link-additive system:
\begin{enumerate}
\item[i)] The objective function \eqref{eq_objective} can be expressed as 
\begin{equation}
\label{eq_objlink}
F(\bold{h},\bold{f}) = \sum_{\alpha} \left(\lambda^{HDV}h_\alpha + \lambda^{CRV}f_\alpha \right)\tau_\alpha(h_\alpha+f_\alpha)
\end{equation} 
where $h_\alpha = \Lambda(\bold{h})_\alpha, f_\alpha = \Lambda(\bold{f})_\alpha$ are the link flows corresponding to route flows $\bold{h}, \bold{f}$, respectively. 
 
\item[ii)] For a given route flow of all vehicles $\bold{q}$ if $\bold{q^{CRV}}, \bold{\tilde{q}^{CRV}}$ give rise to the same fleet link flows then the corresponding $\bold{q^{HDV}}, \bold{\tilde{q}^{HDV}}$ also result in the same HDV link flows

\item[iii)] If route assignments $\bold{q^{HDV}}$ and $\bold{\tilde{q}^{HDV}}$ result in the same link flows and $\bold{q^{CRV}}$, $\bold{\tilde{q}^{CRV}}$ result in the same link flows then $F(\bold{q^{HDV}}, \bold{q^{CRV}}) = F(\bold{\tilde{q}^{HDV}}, \bold{\tilde{q}^{CRV}})$, i.e. the objective function \eqref{eq_objective} is invariant to replacing HDV and CRV route flows by different route flows which result in the same link flows.  
\end{enumerate}
\end{proposition}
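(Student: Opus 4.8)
The plan is to establish (i) by a direct computation and then obtain (ii) and (iii) as immediate consequences of the linearity of $\Lambda$ and of the formula proved in (i).

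For (i), I would begin from the route-level objective \eqref{eq_objective}, written componentwise as $F(\bold{h},\bold{f}) = \sum_{r=1}^R (\lambda^{HDV}h_r + \lambda^{CRV}f_r)\,t_r(\bold{h}+\bold{f})$, and substitute the link-additive representation $t_r(\bold{q}) = \sum_{\alpha} I^{r\alpha}\tau_\alpha(\Lambda(\bold{q})_\alpha)$ from Definition \ref{def_linkadditive}. The key step is to interchange the two finite sums over routes $r$ and links $\alpha$, pulling $\tau_\alpha(\Lambda(\bold{h}+\bold{f})_\alpha)$ out and leaving the inner sum $\sum_r I^{r\alpha}(\lambda^{HDV}h_r + \lambda^{CRV}f_r)$. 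By the definition of the conversion operator $\Lambda$, this inner sum is exactly $\lambda^{HDV}\Lambda(\bold{h})_\alpha + \lambda^{CRV}\Lambda(\bold{f})_\alpha = \lambda^{HDV}h_\alpha + \lambda^{CRV}f_\alpha$; using linearity of $\Lambda$ once more to identify $\Lambda(\bold{h}+\bold{f})_\alpha = h_\alpha + f_\alpha$ yields the asserted link-level formula \eqref{eq_objlink}.

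For (ii), I would set $\bold{q^{HDV}} = \bold{q} - \bold{q^{CRV}}$ and $\bold{\tilde{q}^{HDV}} = \bold{q} - \bold{\tilde{q}^{CRV}}$, apply $\Lambda$, and use its linearity to write $\Lambda(\bold{q^{HDV}}) = \Lambda(\bold{q}) - \Lambda(\bold{q^{CRV}})$ and similarly for the tilde version. Since $\Lambda(\bold{q})$ is common and $\Lambda(\bold{q^{CRV}}) = \Lambda(\bold{\tilde{q}^{CRV}})$ by hypothesis, the two HDV link flows coincide. For (iii), I would simply invoke (i): that formula exhibits $F$ as a function of the link-flow vectors $(h_\alpha)_\alpha$ and $(f_\alpha)_\alpha$ only, so two route-flow pairs sharing the same HDV link flows and the same CRV link flows are evaluated by the identical link-level expression and thus give equal objective values.

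All three arguments are finite and routine, so there is no real analytic difficulty; the only point needing care is the bookkeeping in the sum-swap of (i). There one must keep the $\lambda$-weights attached to $\bold{h}$ and $\bold{f}$ (whose aggregates produce $h_\alpha$ and $f_\alpha$) separate from the argument of $\tau_\alpha$, which is the total link flow $h_\alpha + f_\alpha$, while observing that both aggregations are governed by the same incidence matrix $I$ through $\Lambda$. Once this consistency is confirmed, parts (ii) and (iii) follow with essentially no further computation.
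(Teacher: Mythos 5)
Your proposal is correct and follows essentially the same route as the paper: part (i) by expanding the route-level objective, substituting the link-additive form of $t_r$, and interchanging the sums over routes and links to recognize the inner sum as the $\lambda$-weighted link flows; part (ii) from $\bold{q^{HDV}} = \bold{q} - \bold{q^{CRV}}$ and linearity of $\Lambda$; and part (iii) as an immediate consequence of the link-level formula in (i). No gaps.
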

\begin{proof}
i) In a link-additive system we have
\begin{eqnarray*}
F(\bold{h},\bold{f}) &=& (\lambda^{HDV} \bold{h} + \lambda^{CRV} \bold{f}) \cdot \bold{t}(\bold{h} + \bold{f}) \\&=& \sum_{r=1}^R \left(\lambda^{HDV} {h_r} + \lambda^{CRV} {f_r}\right) t_r(\bold{h} + \bold{f})\\
&=& \sum_{r=1}^R \left(\lambda^{HDV} {h_r} + \lambda^{CRV} {f_r}\right) \sum_\alpha I^{r\alpha} \tau_\alpha(\Lambda(\bold{h} + \bold{f})_\alpha)\\
&=&  \sum_\alpha \left(\sum_{r=1}^R \left(\lambda^{HDV} {h_r} + \lambda^{CRV} {f_r}\right)I^{r\alpha}\right) \tau_\alpha(\Lambda(\bold{h} + \bold{f})_\alpha)  \\
&=& \sum_\alpha \left(\lambda^{HDV}h_\alpha + \lambda^{CRV}f_\alpha\right)\tau_\alpha(h_\alpha + f_\alpha)
\end{eqnarray*}

ii) is a simple consequence of the fact that $\bold{q^{HDV}} = \bold{q} - \bold{q^{CRV}}$ and linearity of the route-flow to link-flow conversion operator $\Lambda$, which implies $\Lambda(\bold{q^{HDV}}) = \Lambda(\bold{q}) - \Lambda(\bold{q^{CRV}})$. 

iii) is a direct consequence of i) as the objective function \eqref{eq_objective} can, in link-additive systems be expressed as a function dependent on link flows only, \eqref{eq_objlink}. 
\end{proof}

\begin{proposition}[Gradient of route travel times is positive semi-definite in link additive systems]
\label{Prop_grad}
Let $\bold{t} = (t_1(\bold{q}), t_2(\bold{q})), \dots, t_R(\bold{q}))^T$ be the travel times on routes $1,\dots, R$ in a link additive travel system. If $\tau_\alpha' > 0$ for every link $\alpha$, i.e. link delay functions are strictly increasing, then 
\begin{enumerate}
\item[i)] The gradient matrix $\nabla \bold{t}$ is positive semi-definite.
\item[ii)] The gradient matrix $\nabla \bold{t}$ is positive definite iff the routes $1, \dots R$ are linearly independent.
\end{enumerate}
\end{proposition}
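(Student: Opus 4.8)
The plan is to compute the gradient matrix $\nabla \bold{t}$ explicitly from the link-additive representation and to recognize that the associated quadratic form diagonalizes over links. Starting from $t_r(\bold{q}) = \sum_\alpha I^{r\alpha}\,\tau_\alpha(\Lambda(\bold{q})_\alpha)$ and using $\Lambda(\bold{q})_\alpha = \sum_s I^{s\alpha} q_s$, the chain rule gives
\[
(\nabla \bold{t})_{rs} = \frac{\partial t_r}{\partial q_s} = \sum_\alpha I^{r\alpha}\,\tau_\alpha'(\Lambda(\bold{q})_\alpha)\, I^{s\alpha},
\]
so that $\nabla \bold{t}(\bold{q}) = I D I^T$, where $D = \mathrm{diag}\big(\tau_\alpha'(\Lambda(\bold{q})_\alpha)\big)$ is the $A \times A$ diagonal matrix of link-delay derivatives. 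By hypothesis $\tau_\alpha' > 0$, so $D$ has strictly positive diagonal entries (and, incidentally, $\nabla\bold{t}$ comes out symmetric, which is consistent with the quadratic-form notion of positive definiteness in Definition \ref{def_posdef}).

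Next I would evaluate the quadratic form on an arbitrary $\bold{v} \in \mathbb{R}^R$. Writing $\bold{w} := I^T \bold{v}$, whose components are exactly the induced link flows $w_\alpha = \sum_r I^{r\alpha} v_r = \Lambda(\bold{v})_\alpha$, one obtains
\[
\bold{v}^T \nabla \bold{t}(\bold{q})\, \bold{v} = \bold{v}^T I D I^T \bold{v} = \bold{w}^T D \bold{w} = \sum_\alpha \tau_\alpha'(\Lambda(\bold{q})_\alpha)\,\Lambda(\bold{v})_\alpha^2.
\]
Every summand is nonnegative because each $\tau_\alpha' > 0$, which establishes part i): $\nabla \bold{t}$ is positive semi-definite at every $\bold{q}$.

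For part ii) the biconditional falls out of the very same identity. Since the weights $\tau_\alpha'(\Lambda(\bold{q})_\alpha)$ are strictly positive, the sum $\sum_\alpha \tau_\alpha'(\Lambda(\bold{q})_\alpha)\Lambda(\bold{v})_\alpha^2$ vanishes if and only if $\Lambda(\bold{v})_\alpha = 0$ for every link $\alpha$, i.e. $\sum_r v_r I^{r\alpha} = 0$ for all $\alpha$. If the routes are linearly independent, this forces $\bold{v} = \bold{0}$, so the form is strictly positive for every $\bold{v} \neq \bold{0}$ and $\nabla \bold{t}$ is positive definite; conversely, if the routes are linearly dependent there is, by definition, a nonzero $\bold{v}$ with $\sum_r v_r I^{r\alpha} = 0$ for all $\alpha$, at which the form vanishes, so $\nabla \bold{t}$ is not positive definite.

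I do not anticipate a serious obstacle: the core is a one-line chain-rule differentiation followed by the factorization $\nabla \bold{t} = I D I^T$. The only point requiring genuine care is matching the kernel of the quadratic form with the definition of linear dependence — precisely, that $\bold{v}^T \nabla \bold{t}\, \bold{v} = 0$ is equivalent to $\bold{v}$ lying in the null space of $I^T$ (equivalently $\Lambda(\bold{v}) = \bold{0}$), an equivalence that uses the strict positivity of each $\tau_\alpha'$ in an essential way. I would also remark that the conclusion is uniform in $\bold{q}$: positivity of the weights holds at every $\bold{q}$, so semi-definiteness holds everywhere, and definiteness holds everywhere exactly when the routes are linearly independent, regardless of the point at which the gradient is evaluated.
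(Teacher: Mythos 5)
Your proof is correct and is essentially the paper's argument in a more compact notation: your factorization $\nabla\bold{t}=IDI^{T}$ and the identity $\bold{v}^{T}\nabla\bold{t}\,\bold{v}=\sum_{\alpha}\tau_{\alpha}'\,\Lambda(\bold{v})_{\alpha}^{2}$ are exactly the paper's decomposition $\nabla\bold{t}=\sum_{\alpha}d_{\alpha}M_{\alpha}$ with $\bold{v}^{T}M_{\alpha}\bold{v}=\bigl(\sum_{i}v_{i}I^{r_{i}\alpha}\bigr)^{2}$. The kernel identification in part ii) also matches the paper's case analysis, so nothing further is needed.
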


\begin{proof}
i) Denoting $e_{ij}$ the $R \times R$ matrix with entry $(i,j)$ equal $1$ and all other entries equal $0$ we obtain
\begin{eqnarray*}
\nabla \bold{t} = \sum_{i=1}^R \sum_{j=1}^R  \frac {\partial t_i} {\partial q_j} e_{ij} &=& \sum_{i=1}^R \sum_{j=1}^R  \frac {\partial \sum_{\alpha} I^{r_i \alpha} \tau_\alpha(\Lambda(\bold{q})_\alpha)} {\partial q_j} e_{ij} \\ &=& \sum_{\alpha} \sum_{i=1}^R \sum_{j=1}^R I^{r_i \alpha} \frac {\partial  \tau_\alpha(\Lambda(\bold{q})_\alpha)} {\partial q_j} e_{ij} \\
&=& \sum_{\alpha} \sum_{i=1}^R \sum_{j=1}^R I^{r_i \alpha}I^{r_j \alpha} \tau_\alpha'\left(\Lambda(\bold{q})_\alpha)\right) e_{ij} \\
&=& \sum_{\alpha}\tau_\alpha'\left(\Lambda(\bold{q})_\alpha)\right) \left(\sum_{i=1}^R \sum_{j=1}^R I^{r_i \alpha}I^{r_j \alpha} e_{ij}\right) \\
&=& \sum_\alpha d_\alpha M_\alpha, 
\end{eqnarray*}
where we denoted $d_a := \tau_{\alpha}'(\Lambda(\bold{q})_\alpha)$ and $M_\alpha:= \sum_{i=1}^R \sum_{j=1}^R I^{r_i \alpha}I^{r_j \alpha} e_{ij}$.
Now, every $M_\alpha$ is symmetric and positive semidefinite with one eigenvalue equal $N_\alpha$ (number of routes containing $\alpha$) and $R-1$ null eigenvalues. Hence, $\nabla \bold{t}$ is positive semi-definite as a linear combination, with non-negative coefficients, of positive semi-definite matrices.

ii) Suppose first that routes $1, \dots, R$ are linearly dependent. Then there exists a vector $\bold{v} \neq \bold{0}$ in $\mathbb{R}^R$ such that   for every link $\alpha$ we have $\sum_{i=1}^R v_i I^{r_i \alpha} = 0$. Consequently $M^\alpha \bold{v} = \bold{0}$ for every link $\alpha$ and hence $\bold{v}$ is a null vector of $\bold{\nabla t}$ which implies $\bold{v}^T \bold{\nabla t}(\bold{q}) \bold{v} = 0$. 
On the contrary, if routes $1, \dots, R$ are linearly independent then for every $\bold{v} \neq \bold{0}$ in $\mathbb{R}^R$ there exists a link $\alpha_0$ such that $\sum_{i=1}^R v_i I^{r_i \alpha_0} \neq 0$. Consequently,  
\begin{eqnarray*}
\bold{v}^T \bold{\nabla t}(\bold{q}) \bold{v} =  \sum_\alpha d_\alpha \bold{v}^T M_\alpha \bold{v} = \sum_{\alpha \neq \alpha_0} d_\alpha \bold{v}^T M_\alpha \bold{v} + d_{\alpha_0} \bold{v}^T M_{\alpha_0} \bold{v}
\end{eqnarray*}
is positive as $\sum_{\alpha \neq \alpha_0} d_\alpha \bold{v}^T M_\alpha \bold{v}$ is non-negative and $\bold{v}^T M_{\alpha_0} \bold{v} = \left(\sum_{i=1}^R v_i I^{r_i\alpha_0}\right)^2 > 0$.

\end{proof}
As a corollary, we have the following generalization of Theorem \ref{th_inverse}, which holds in the case of linearly dependent routes. 

\begin{theorem}[Inverse of fleet assignment for linearly dependent routes]
\label{th_inverselindep}
Let $\bold{t} = (t_1(\bold{q}), t_2(\bold{q})), \dots, t_R(\bold{q}))^T$ be continuously differentiable delay functions (travel times) of routes $1,\dots, R$ in a link-additive system. 
If $\lambda^{HDV} <  \lambda^{CRV}$ then for every $\bold{q}$ such that the gradient matrix $\nabla \bold{t}(\bold{q})$ is positive definite $\bold{q^{CRV}} \in G(\bold{q - q^{CRV}})$ and $\bold{\tilde{q}^{CRV}} \in G(\bold{q - \tilde{q}^{CRV}})$ implies $\bold{q^{CRV}}, \bold{\tilde{q}^{CRV}}$ give rise to the same fleet link flows. Consequently, for $\lambda^{HDV} <  \lambda^{CRV}$ total route flow $\bold{q}$ (or equivalently total link flow) uniquely determines $CRV$ link flow optimizing the fleet objective.
\end{theorem}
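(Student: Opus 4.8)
The plan is to re-run the variational argument from the proof of Theorem \ref{th_inverse} essentially verbatim up to inequality \eqref{eq_last}, and to depart from it only at the final step, where positive definiteness of $\nabla\bold{t}$ is replaced by the link-additive structure. Concretely, suppose both $\bold{q^{CRV}}$ and $\bold{\tilde q^{CRV}}$ are optimal, i.e. $\bold{q^{CRV}}\in G(\bold{q}-\bold{q^{CRV}})$ and $\bold{\tilde q^{CRV}}\in G(\bold{q}-\bold{\tilde q^{CRV}})$. Writing the first-order optimality conditions at both points exactly as in \eqref{eq_der1} and \eqref{eq_der2}, taking the feasible direction $\bold{g}=\bold{\tilde q^{CRV}}-\bold{q^{CRV}}$ (which satisfies $|\bold{g}|=0$ and, by convexity of the feasible simplex, is feasible at $\bold{q^{CRV}}$ while $-\bold{g}$ is feasible at $\bold{\tilde q^{CRV}}$), and adding the two inequalities, I obtain precisely \eqref{eq_last}, namely
\[
-(\lambda^{CRV}-\lambda^{HDV})\,\bold{g}\cdot\big(\nabla\bold{t}(\bold{q})\,\bold{g}\big)\ge 0.
\]

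The key departure comes next. Rather than invoking positive definiteness of $\nabla\bold{t}(\bold{q})$ — which, by Proposition \ref{Prop_grad}, cannot hold once the routes are linearly dependent — I would substitute the explicit link-additive expression for the quadratic form computed in the proof of Proposition \ref{Prop_grad},
\[
\bold{g}\cdot\big(\nabla\bold{t}(\bold{q})\,\bold{g}\big)=\sum_\alpha \tau_\alpha'\big(\Lambda(\bold{q})_\alpha\big)\,\big(\Lambda(\bold{g})_\alpha\big)^2 .
\]
Since $\lambda^{CRV}>\lambda^{HDV}$, the displayed inequality forces this sum to be non-positive; but each $\tau_\alpha'>0$ and each summand is a non-negative square, so every term must vanish, giving $\Lambda(\bold{g})_\alpha=0$ for all links $\alpha$, i.e. $\Lambda(\bold{g})=\bold{0}$. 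As $\bold{g}=\bold{\tilde q^{CRV}}-\bold{q^{CRV}}$ and $\Lambda$ is linear, this reads $\Lambda(\bold{q^{CRV}})=\Lambda(\bold{\tilde q^{CRV}})$, which is exactly the claim that the two admissible fleet route flows induce identical fleet link flows.

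For the final (\emph{consequently}) assertion I would upgrade this to uniqueness of the optimal fleet link flow: by Proposition \ref{Prop_SLF}(ii) equal fleet link flows force equal HDV link flows (using $\bold{q^{HDV}}=\bold{q}-\bold{q^{CRV}}$ and linearity of $\Lambda$), and by Proposition \ref{Prop_SLF}(iii) the objective \eqref{eq_objective} depends on the route flows only through the link flows. Hence the whole optimization descends to a problem posed on the convex feasible set of Proposition \ref{Prop_convexity}(ii), and the argument above shows its minimizer is determined uniquely in link-flow space (even though the underlying route flow is not, consistent with the multi-valuedness of $G$ for dependent routes).

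The step I expect to demand the most care is the passage through \eqref{eq_last}: one must check that the feasibility bookkeeping of Theorem \ref{th_inverse} — that $\bold{g}$ and $-\bold{g}$ are genuine feasible directions at the respective minimizers even when coordinates sit on the boundary — carries over unchanged, and that the merely \emph{semidefinite} form still pins down $\Lambda(\bold{g})$. The strict positivity $\tau_\alpha'>0$ is exactly what promotes ``the quadratic form vanishes'' to ``the link-flow difference vanishes''; this is the precise place where positive semi-definiteness of $\nabla\bold{t}$ (Proposition \ref{Prop_grad}(i)), rather than the stronger definiteness used in Theorem \ref{th_inverse}, is the hypothesis actually consumed by the proof.
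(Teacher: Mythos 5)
Your proposal is correct and follows essentially the same route as the paper: both re-use the variational argument of Theorem \ref{th_inverse} up to \eqref{eq_last} and then exploit the decomposition $\bold{g}\cdot(\nabla\bold{t}(\bold{q})\bold{g})=\sum_\alpha \tau_\alpha'(\Lambda(\bold{q})_\alpha)\,(\Lambda(\bold{g})_\alpha)^2$ from Proposition \ref{Prop_grad} together with $\tau_\alpha'>0$ to conclude $\Lambda(\bold{g})=\bold{0}$ (the paper phrases this as a contradiction starting from distinct link flows, while you argue directly that every summand vanishes, which is the same reasoning). The only cosmetic difference is that you make explicit the appeal to Proposition \ref{Prop_SLF} for the final uniqueness-in-link-flow-space assertion, which the paper only notes in passing.
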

\begin{proof}
Repeating the proof of Theorem \ref{th_inverse} we let $\bold{f^*}$, $\bold{\tilde{f}}$ to be two solutions of \eqref{eq_qCAV} resulting in different fleet link flows (note that if routes are linearly independent then different route flows \emph{always} result in different link flows). Then $\bold{g} = \bold{\tilde{f}} - \bold{f^*}$ is a route flow (with entries which may be negative) whose link flows (again with perhaps negative entries) do not all vanish. Consequently, as in the proof of Proposition \ref{Prop_grad}, there exists a link $\alpha_0$ such that $\sum_{i=1}^R g_i I^{r_i \alpha_0} \neq 0$ and hence
\begin{eqnarray*}
\bold{g}^T \bold{\nabla t}(\bold{q}) \bold{g} =  \sum_\alpha d_\alpha \bold{g}^T M_\alpha \bold{g} = \sum_{\alpha \neq \alpha_0} d_\alpha \bold{g}^T M_\alpha \bold{g} + d_{\alpha_0} \bold{g}^T M_{\alpha_0} \bold{g} > 0
\end{eqnarray*}
which contradicts \eqref{eq_last}. Hence, $\bold{q^{CRV}}, \bold{\tilde{q}^{CRV}}$ give rise to the same fleet link flows. We note that by Proposition \ref{Prop_SLF} different solutions with the same link flows imply the same value of the objective function.

\end{proof}
In the case of link-additive systems, in which \emph{every route} is available and optimized by CRVs, we can reformulate the inverse problem in terms of link-flows using \eqref{eq_objlink}, which results in an alternative, simplified proof of the inverse fleet assignment theorem. We state the result in a more general form, allowing the link flows to be weakly interdependent with proof which follows the same lines as the proof of Theorem \ref{th_inverse}. 

\begin{definition}[Dependent link-additive systems]
\label{def_deplinkadd}
Let $\bold{a} = (a_\alpha)$ be the vector of link flows in a travel system. We say that this travel system is \emph{dependent link-additive} if for every route $r$ from Origin to Destination we have
\begin{equation*}
t_r = \sum_{\alpha} I^{r\alpha} \tau_\alpha\left(\bold{a}\right),
\end{equation*}
where $\tau_\alpha$ are continuously differentiable link-delay functions. 
\end{definition}
\begin{remark}
Note that Definition \ref{def_deplinkadd} generalizes Definition \ref{def_linkadditive} as it assumes that link delay functions $\tau_\alpha$ depend on the whole vector of link flows $\bold{a}$ and not only on the link flow on the given link $\bold{a}_\alpha$. 
\end{remark}

\begin{theorem}[Inverse of fleet assignment for dependent link-additive systems]
\label{th_inverselinkadd}
Let $\boldsymbol{\tau}(\bold{a}) = (\tau_{\alpha})(\bold{a})^T$ be the vector of continuously differentiable link delay functions dependent on the vector $\bold{a} = (a_\alpha)^T$  of link flows. Let $q^{CRV}$ be the total flow of fleet vehicles (i.e. fleet size). If $\lambda^{HDV} <  \lambda^{CRV}$ then for any realisable (i.e. resulting from at least one route flow) link flow $\bold{a}$ such that the gradient matrix $\nabla \boldsymbol{\tau}(\bold{a})$ is positive definite there exists at most one realisable $\bold{a^{CRV}}$ such that $\bold{a^{CRV}} \le \bold{a}$ and 
$\bold{a^{CRV}} \in \Gamma(\bold{a} - \bold{a^{CRV}})$, where $\Gamma$ is the link assignment operator defined in Defintion \ref{def_linkassignoper}. 
\end{theorem}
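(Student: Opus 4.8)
The plan is to transcribe the argument of Theorem \ref{th_inverse} into link-flow space, replacing the route objective $F$, the route travel-time vector $\bold{t}$ and the route flows $\bold{f}, \bold{q}$ by their link-flow counterparts $\Phi$, $\boldsymbol{\tau}$ and $\boldsymbol{\phi}, \bold{a}$. Concretely, I would first observe that the inclusion $\bold{a^{CRV}} \in \Gamma(\bold{a} - \bold{a^{CRV}})$ says precisely that $\boldsymbol{\phi}^* := \bold{a^{CRV}}$ minimizes $\boldsymbol{\phi} \mapsto \Phi(\bold{a} - \boldsymbol{\phi}^*, \boldsymbol{\phi})$ over the feasible link-flow set $S := \{\boldsymbol{\phi} : \exists\, \bold{f},\ |\bold{f}| = q^{CRV},\ \boldsymbol{\phi} = \Lambda(\bold{f})\}$. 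The decisive structural fact, exactly as in the route case, is that the total link flow fed into $\boldsymbol{\tau}$ is $(\bold{a} - \boldsymbol{\phi}^*) + \boldsymbol{\phi}$, so along a perturbation $\boldsymbol{\phi} = \boldsymbol{\phi}^* + \delta\boldsymbol{\psi}$ of the fleet link flow (with the HDV part frozen at $\bold{a} - \boldsymbol{\phi}^*$) the argument of $\boldsymbol{\tau}$ equals $\bold{a} + \delta\boldsymbol{\psi}$ and is differentiated at the fixed point $\bold{a}$, so that $\nabla\boldsymbol{\tau}(\bold{a})$ appears.

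Next I would write the first-order variational inequality. Let $\boldsymbol{\phi}^*$ and $\tilde{\boldsymbol{\phi}}$ be two solutions in $S$, both feasible for the same fixed total $\bold{a}$. For every feasible direction $\boldsymbol{\psi}$ at $\boldsymbol{\phi}^*$ the one-sided derivative of $\Phi(\bold{a} - \boldsymbol{\phi}^*, \cdot)$ is non-negative, and computing it by the product rule yields the link analogue of \eqref{eq_der1},
\begin{equation*}
\lambda^{CRV}\boldsymbol{\psi}\cdot\boldsymbol{\tau}(\bold{a}) + \big((\lambda^{CRV} - \lambda^{HDV})\boldsymbol{\phi}^* + \lambda^{HDV}\bold{a}\big)\cdot(\nabla\boldsymbol{\tau}(\bold{a})\boldsymbol{\psi}) \ge 0,
\end{equation*}
together with the symmetric inequality at $\tilde{\boldsymbol{\phi}}$ for a feasible direction $\tilde{\boldsymbol{\psi}}$. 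Since these are one-sided derivatives it suffices, exactly as in Remark \ref{rem_localmin}, that $\boldsymbol{\phi}^*,\tilde{\boldsymbol{\phi}}$ be local minimizers.

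The crucial step, which replaces the explicit $|\bold{g}| = 0$ bookkeeping of the route proof, is the convexity of $S$ established in Proposition \ref{Prop_convexity} ii): since $\boldsymbol{\phi}^*, \tilde{\boldsymbol{\phi}} \in S$ and $S$ is convex, the segment joining them lies in $S$, so $\boldsymbol{\psi} := \tilde{\boldsymbol{\phi}} - \boldsymbol{\phi}^*$ is a feasible direction at $\boldsymbol{\phi}^*$ and $\tilde{\boldsymbol{\psi}} := -\boldsymbol{\psi}$ is feasible at $\tilde{\boldsymbol{\phi}}$. Adding the two inequalities with this choice, the $\boldsymbol{\tau}(\bold{a})$ terms and the $\lambda^{HDV}\bold{a}$ terms cancel, and using $\boldsymbol{\phi}^* - \tilde{\boldsymbol{\phi}} = -\boldsymbol{\psi}$ I obtain the link analogue of \eqref{eq_last},
\begin{equation*}
-(\lambda^{CRV} - \lambda^{HDV})\,\boldsymbol{\psi}\cdot(\nabla\boldsymbol{\tau}(\bold{a})\boldsymbol{\psi}) \ge 0.
\end{equation*}
Because $\lambda^{CRV} > \lambda^{HDV}$ and $\nabla\boldsymbol{\tau}(\bold{a})$ is positive definite, this forces $\boldsymbol{\psi} = \bold{0}$, i.e. $\boldsymbol{\phi}^* = \tilde{\boldsymbol{\phi}}$, which is the claimed uniqueness of $\bold{a^{CRV}}$.

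I expect no serious obstacle, since the algebra duplicates that of Theorem \ref{th_inverse}; the one genuinely new point to verify is that the variational inequality is taken along feasible directions of the link-flow polytope $S$, not over all of $[0,\infty)^A$. This is exactly where convexity of $S$ (Proposition \ref{Prop_convexity}) is indispensable: it supplies the admissible straight-line direction between two minimizers and guarantees the cancellation goes through. A secondary check is that positive definiteness is now used for the \emph{full} coupling matrix $\nabla\boldsymbol{\tau}(\bold{a})$ of the dependent link-additive setting of Definition \ref{def_deplinkadd}, rather than a diagonal object; but it enters only through $\boldsymbol{\psi}\cdot(\nabla\boldsymbol{\tau}(\bold{a})\boldsymbol{\psi}) > 0$ for $\boldsymbol{\psi} \neq \bold{0}$, which is precisely Definition \ref{def_posdef}.
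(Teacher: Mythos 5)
Your proposal is correct and follows essentially the same route as the paper, which proves the theorem precisely by repeating the argument of Theorem \ref{th_inverse} with $(\bold{q},\bold{f},\bold{t},F)$ replaced by $(\bold{a},\boldsymbol{\phi},\boldsymbol{\tau},\Phi)$ and by invoking Proposition \ref{Prop_convexity} to certify that the difference of two realisable fleet link flows is a feasible direction. You have simply written out in full the details that the paper leaves implicit; the one point the paper singles out as needing verification, feasibility of $\tilde{\boldsymbol{\phi}}-\boldsymbol{\phi}^*$ via convexity of the realisable set, is exactly the point you identified and justified.
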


\begin{proof}
The proof is a repetition of the proof of Theorem \ref{th_inverse} with the route flows replaced by link flows, i.e. $(\bold{q},\bold{q^{HDV}},\bold{q^{CRV}}) \to (\bold{a}, \bold{a^{HDV}}, \bold{a^{CRV}})$, the route delay functions $\bold{t}$ replaced by link delay functions $\boldsymbol{\tau}$ and the objective function $F$ replaced by the objective funtion $\Phi$. One only needs to verify that for  realisable link flows $\bold{\tilde{a}}$ and $\bold{a^*}$ the direction $\boldsymbol{\gamma}:= \bold{\tilde{a}} - \bold{a^*}$ is feasible for $\bold{a^*}$, which indeed follows by convexity of realisable link flows, Proposition \ref{Prop_convexity}.   
\end{proof}
\begin{corollary}
If the link delay functions are independent (e.g. given by the BPR functions) then $\boldsymbol{\tau}= (\tau_{\alpha}(a_\alpha))$ and hence
$\nabla \boldsymbol{\tau}$ is a diagonal matrix with entries $\tau'_\alpha$, which is clearly positive definite iff the derivatives $\tau'_\alpha(a_\alpha))$ are positive. 
\end{corollary}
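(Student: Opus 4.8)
The plan is to verify two elementary facts in turn: that independence of the link delay functions forces the Jacobian $\nabla\boldsymbol{\tau}$ into diagonal form, and that a diagonal matrix is positive definite exactly when its diagonal entries are positive. Neither step presents a genuine obstacle; the corollary is essentially a specialization of Theorem \ref{th_inverselinkadd} to the case where its hypothesis ``$\nabla\boldsymbol{\tau}(\bold{a})$ is positive definite'' becomes a condition checkable coordinate by coordinate.

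First I would compute the entries of the Jacobian. Independence means that each $\tau_\alpha$ is a function of the single variable $a_\alpha$ only, so that $\partial \tau_\alpha/\partial a_\beta = 0$ whenever $\beta \neq \alpha$ while $\partial \tau_\alpha/\partial a_\alpha = \tau'_\alpha(a_\alpha)$. Hence
\begin{equation*}
\nabla\boldsymbol{\tau}(\bold{a}) = \mathrm{diag}\left(\tau'_1(a_1), \dots, \tau'_A(a_A)\right),
\end{equation*}
a diagonal and therefore symmetric matrix. In particular, the generality of Definition \ref{def_posdef}, which permits non-symmetric matrices, plays no role here, so the usual characterizations apply without reservation.

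Then I would check the positive-definiteness equivalence directly from Definition \ref{def_posdef}. For any $\bold{v} \in \mathbb{R}^A$ the quadratic form reads $\bold{v}^T \nabla\boldsymbol{\tau}(\bold{a})\,\bold{v} = \sum_{\alpha} \tau'_\alpha(a_\alpha)\, v_\alpha^2$. If every $\tau'_\alpha(a_\alpha) > 0$, then this sum is strictly positive for all $\bold{v} \neq \bold{0}$, which is exactly positive definiteness. Conversely, if $\tau'_{\alpha_0}(a_{\alpha_0}) \le 0$ for some link $\alpha_0$, then testing the form on the standard basis vector $\bold{e}_{\alpha_0}$ yields $\bold{e}_{\alpha_0}^T \nabla\boldsymbol{\tau}(\bold{a})\, \bold{e}_{\alpha_0} = \tau'_{\alpha_0}(a_{\alpha_0}) \le 0$, so the matrix fails to be positive definite. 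This establishes the stated equivalence, and the identification $\boldsymbol{\tau} = (\tau_\alpha(a_\alpha))$ follows by the very definition of independent link delays.

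The only point requiring any care — rather than a real obstacle — is the converse direction, where one must exhibit an explicit witness (a coordinate vector) certifying that a single non-positive diagonal entry destroys positive definiteness; everything else is immediate from the definitions. For concreteness I would note that the BPR functions, being strictly increasing, satisfy $\tau'_\alpha > 0$ on $[0,\infty)$, so the positive-definiteness hypothesis of Theorem \ref{th_inverselinkadd} is automatically met in that classical case.
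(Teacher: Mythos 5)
Your proposal is correct and matches the paper's (implicit) reasoning exactly: the paper states this corollary without proof, treating the diagonality of $\nabla\boldsymbol{\tau}$ and the coordinatewise positivity criterion as immediate, and your write-up simply spells out that standard verification, including the coordinate-vector witness for the converse. Nothing is missing and nothing diverges from the intended argument.
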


\begin{remark}
The positive-definitedness of $\nabla \boldsymbol{\tau(a)}$ is a condition which is known to guarantee the existence and uniqueness of user equilibrium, see \cite{Smith1979, Dafermos, WatlingAsymmetric}. 
\end{remark}

\begin{remark}
The default setting is to consider all the routes between one OD pair. Then the CRVs 'compete' against HDVs from the same OD pair. However, we might also consider a restriction to only a subset of available routes (e.g. lorries using only selected routes). Then the HDVs having \emph{exactly the same} route choice set are considered as included in the objective function. All other vehicles (perhaps those with a larger choice set) are included as background traffic. 
\end{remark}

\begin{remark}
\label{Rem_mismatch}
Comparison of Fig. \ref{Fig_lambdas} and Fig.  \ref{Fig_convexity} reveals that there is a certain mismatch between the ease of optimization by the fleet and uniqueness of inverse. Namely, there exist cases (below we list only those with $\lambda^{CRV} \ge 0$) for which
\begin{itemize}
\item Fleet optimizes a convex objective and the inverse is unique (e.g. selfish),
\item Fleet optimizes a convex objective and the inverse is non-unique (e.g. social, altruistic),
\item Fleet optimizes a concave objective and the inverse is unique (e.g. malicious),
\item Fleet optimizes an objective which is neither convex nor concave on the whole domain and the inverse is unique (e.g. disruptive).
\end{itemize}
\end{remark}

\subsection{Generalization to multiple OD pairs}
\label{Sec_Gen}

\begin{definition}[Unit controlled by CRVs]
A basic \emph{unit of traffic controlled by CRVs} is given by $u_s = \{o_s, d_s, q^{CRV,s}, R_s\}$ where $s \in S$ is the index enumerating the units consisting of origin $o_s$, destination $d_s$, total flow $q^{CRV,s}$ of vehicles controlled by CRV operator and the set of routes $R_s$ available to every vehicle within the unit. 
\end{definition}
Note that two units can share the same origin and destination, however differ by the available sets of routes. 
\begin{definition} 
The flow controlled by the CRVs is given as 
$$U = \{u^s\}_{s=1}^{S}$$ where $S$ is the number of units and each $u_s$ is a unit controlled by CRVs.  
\end{definition}

In the case of multiple units controlled by CRVs, the total CRV-relevant flow is given by the concatenation of flows on units,
\begin{equation*}
\bold{q} = [\bold{q^{Tot, 1}}, \dots, \bold{q^{Tot, S}}]
\end{equation*}
where $\bold{q^{Tot, s}} = \bold{q^{HDV,s}} + \bold{q^{CRV,s}}$ is an $R_s$-dimensional vector of flow of vehicles in unit $s$, i.e. for 
\begin{eqnarray*}
\bold{q^{CRV,s}} = (q^{CRV,s}_1, \dots, q^{CRV,s}_{R_s})\\
\bold{q^{HDV,s}} = (q^{HDV,s}_1, \dots, q^{HDV,s}_{R_s})
\end{eqnarray*}
we have 
\begin{equation*}
\bold{q^{Tot,s}} = (q^{HDV,s}_1 + q^{CRV,s}_1, \dots, q^{HDV,s}_{R_s} + q^{CRV,s}_{R_s}) = ({q^{Tot,s}_1}, \dots {q^{Tot,s}_{R_s}}).
\end{equation*}
\noindent Therefore, $\bold{q}$ is a vector with $R = \sum_{s=1}^S R_s$ components. 

When considering the inverse fleet assignment theorem, we assume that the vector $\bold{q}$ is known and we denote $\bold{q^{CRV}} = [\bold{q^{CRV,1}}, \dots, \bold{q^{CRV,S}}]$ the concatenation of $CRV$ flows and $\bold{q^{HDV}} = [\bold{q^{HDV,1}}, \dots, \bold{q^{HDV,S}}]$ the concatenation of $HDV$ flows into $R$-vectors. Moreover, we assume that all the background traffic and its impact on travel times is implicitly included in the functions $t_1, \dots, t_R$.

\begin{theorem}[Inverse of fleet assignment for multiple OD pairs]
\label{th_generalize}
Let $\bold{t} = (t_1(\bold{q}), t_2(\bold{q})), \dots, t_R(\bold{q}))^T$ be the travel times on routes $1,\dots, R$, dependent on the total flow $\bold{q} = (q_1, \dots, q_R)$. 
Suppose $\bold{t}$ is continuously differentiable and $\bold{g}\cdot (\nabla \bold{t(q)}\bold{g})>0$ for every feasible direction $\bold{g} = (\bold{g^1}, \dots, \bold{g^S})\neq \bold{0}$ such that $|\bold{g^s}| = \bold{0}$ for every $s \in S$. Fix the flow of fleet vehicles, $q^{CRV,s}$ on every unit $s$ controlled by CRVs.  Then the mapping $\bold{q} \mapsto \bold{q^{CRV}}$ (defined for $\bold{q}$ such that $|q^{Tot,s}|\ge |q^{CRV,s}|$ for every $s$) is single-valued if $\lambda^{HDV} <  \lambda^{CRV}$. Equivalently, the fleet assignment operator $G$ is 1-1. 
\end{theorem}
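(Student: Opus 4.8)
The plan is to repeat the proof of Theorem \ref{th_inverse} almost verbatim, the only genuine change being the structure of the feasible set and hence of the feasible directions. In the multi-OD setting the fleet minimizes $F(\bold{q^{HDV}}, \cdot)$ over the product of simplices $\mathcal{F} = \prod_{s=1}^{S} \{\bold{f^s} \ge 0 : |\bold{f^s}| = q^{CRV,s}\}$ rather than over a single simplex. First I would record that $\mathcal{F}$ is convex: this is Proposition \ref{Prop_convexity}i) applied block by block, together with the fact that a product of convex sets is convex. The feasible directions at a point $\bold{f} \in \mathcal{F}$ are then exactly the vectors $\bold{g} = (\bold{g^1}, \dots, \bold{g^S})$ with $|\bold{g^s}| = 0$ for every $s$ that keep $\bold{f} + \delta\bold{g}$ non-negative for $\delta > 0$ small, which is precisely the class of directions appearing in the positive-definiteness hypothesis.

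Next I would carry over the first-order optimality computation without change. Writing $\bold{f^*} = \bold{q^{CRV}}$ and holding the HDV flow $\bold{q^{HDV}} = \bold{q} - \bold{f^*}$ fixed, the total flow entering $\bold{t}$ along a feasible perturbation is $(\bold{q} - \bold{f^*}) + (\bold{f^*} + \delta\bold{g}) = \bold{q} + \delta\bold{g}$, exactly as in Theorem \ref{th_inverse}, so the directional-derivative inequality coincides with \eqref{eq_der1}, namely $\lambda^{CRV}\bold{g}\cdot\bold{t(q)} + ((\lambda^{CRV}-\lambda^{HDV})\bold{f^*} + \lambda^{HDV}\bold{q})\cdot(\nabla\bold{t}(\bold{q})\bold{g}) \ge 0$. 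The partition into units does not affect this step, since the gradient of $F$ in its second argument is computed pointwise and is blind to the block structure; note also that a second solution $\bold{\tilde{f}}$, though paired with a different HDV complement $\bold{q}-\bold{\tilde{f}}$, produces the same total flow $\bold{q}$, so $\bold{t}$ and $\nabla\bold{t}$ are evaluated at the common point $\bold{q}$.

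The one step that really uses the multi-OD geometry is the feasibility of the connecting direction. If $\bold{f^*}$ and $\bold{\tilde{f}}$ are two solutions of \eqref{eq_qCAV}, then $\bold{g} := \bold{\tilde{f}} - \bold{f^*}$ automatically satisfies $|\bold{g^s}| = 0$ for every $s$, because both points carry mass $q^{CRV,s}$ on unit $s$; moreover, by convexity of $\mathcal{F}$ the segment $(1-\delta)\bold{f^*} + \delta\bold{\tilde{f}}$ stays in $\mathcal{F}$, so $\bold{g}$ is feasible at $\bold{f^*}$ and $-\bold{g}$ is feasible at $\bold{\tilde{f}}$, making $\bold{g}$ admissible in the positive-definiteness assumption. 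Writing the optimality inequality at $\bold{\tilde{f}}$ in the direction $-\bold{g}$ and adding it to the one at $\bold{f^*}$, exactly as in the passage from \eqref{eq_der1}--\eqref{eq_der2} to \eqref{eq_last}, the terms linear in $\bold{t(q)}$ and the $\lambda^{HDV}\bold{q}$ terms cancel and I obtain $-(\lambda^{CRV}-\lambda^{HDV})\bold{g}\cdot(\nabla\bold{t}(\bold{q})\bold{g}) \ge 0$. Since $\lambda^{CRV} > \lambda^{HDV}$ and $\bold{g}$ is a feasible direction, the hypothesis forces $\bold{g}\cdot(\nabla\bold{t}(\bold{q})\bold{g}) = 0$, whence $\bold{g} = 0$ and $\bold{\tilde{f}} = \bold{f^*}$, so $G$ is single-valued as claimed.

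I do not anticipate a serious obstacle: the only point demanding care is that the positive-definiteness requirement is restricted to the per-unit-mass-preserving directions $\{\bold{g} : |\bold{g^s}| = 0 \text{ for all } s\}$ rather than to all of $\mathbb{R}^R$, which is a strictly weaker assumption and is exactly matched by the directions that arise as differences of feasible fleet assignments (compare Remark \ref{rem_feasibledir}). As in Theorem \ref{th_inverse}, the argument uses only that $\bold{f^*}$ and $\bold{\tilde{f}}$ are local minimizers of their respective problems, so the conclusion persists for fleet controllers that optimize each unit only locally.
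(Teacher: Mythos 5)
Your proposal is correct and follows essentially the same route as the paper: the paper's proof simply states that the argument of Theorem \ref{th_inverse} carries over verbatim once one observes (as in Remark \ref{rem_feasibledir}) that only feasible directions $\bold{g}$ with $|\bold{g^s}|=0$ for every $s$ need to be considered, which is exactly the point you elaborate. Your additional checks (convexity of the product of simplices, feasibility of $\bold{g}=\bold{\tilde{f}}-\bold{f^*}$ and $-\bold{g}$, invariance of the evaluation point $\bold{q}$) are the details the paper leaves implicit.
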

\begin{proof}
The proof is exactly the same as the proof of Theorem \ref{th_inverse}. We only note that, as pointed out in Remark \ref{rem_feasibledir} it is enough to consider only feasible directions $\bold{g}$. In particular, Theorem \ref{th_generalize} holds true if $\nabla \bold{t(q)}$ is positive definite, i.e. $\bold{g}\cdot (\nabla \bold{t(q)}\bold{g})>0$ for \emph{every} direction $\bold{g} \neq \bold{0}$. 
\end{proof}

\begin{remark}
Theorems \ref{th_inverselindep} and \ref{th_inverselinkadd} can be generalized in a similar fashion to multiple OD pairs.
\end{remark}

\subsection{Inverse fleet assignment in the discrete case}

\label{Sec_discrete}

The discrete case in inverse fleet assignment is more difficult than the continuous one as the standard tools of differential calculus are unavailable. Moreover, as set out in Section \ref{Sec_realworld} the discrete fleet assignment according to a prescribed objective $(\lambda^{CRV}, \lambda^{HDV})$ is not straightforward. Assuming, however, as discussed in Section \ref{Sec_realworld} that the fleet controller's assignment consists in 1) identifying a local minimum (minima) 2) selecting a local minimum and 3) Arbitrary rounding-up flows to integer values, we have the following result.

\begin{proposition}
Assume the following pipeline:
\begin{enumerate}
\item Fleet controller knows the (integer) HDV flow $\bold{q^{HDV}}$
\item Fleet controller identifies a continuous local minimizer $\bold{q^{CRV}}$ of the objective function. 
\item Fleet controller rounds up the flows obtaining an integer CRV flow $\bold{q^{CRV,integer}}$ which is close to $\bold{q^{CRV}}$. 
\item City obtains the total integer flow $\bold{q}= \bold{q^{HDV}} + \bold{q^{CRV,integer}}$ yet it a priori knows neither $\bold{q^{HDV}}$ nor $\bold{q^{CRV,integer}}$.
\item City finds a flow $\bold{{q^{city}}}$ which lies in the image of the (continuous) fleet assignment operator $H$ such that $\|{\bold{q^{city}}} - \bold{q}\|$ is minimized. 
\item Using the inverse assignment theory (Theorem \ref{th_inverse} or Theorem \ref{th_inverselinkadd} or Theorem \ref{th_inverselindep}) city identifies the unique continuous HDV flow $\bold{q^{HDV,city}}$ such that $H(\bold{q^{HDV,city}}) = {\bold{q^{city}}}$.
\end{enumerate}
Then $\bold{q^{HDV,city}}$ is close to $\bold{q^{HDV}}$.
\end{proposition}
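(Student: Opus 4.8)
The plan is to read the statement as a \emph{well-posedness} (stability) claim: the only discrepancy between the data the city analyses and the flow an idealised continuous controller would have produced is the rounding error of step~3, and the Lipschitz continuity of the inverse operator (Theorem~\ref{Th_continuity}) guarantees that a small perturbation of the observed total flow produces only a small perturbation of the recovered HDV flow. To make this precise I would first introduce the \emph{ideal} total flow $\bold{q^{true}} := \bold{q^{HDV}} + \bold{q^{CRV}}$. Since $\bold{q^{CRV}}$ is a (local) minimiser of the fleet objective for the HDV flow $\bold{q^{HDV}}$, we have $\bold{q^{true}} \in H(\bold{q^{HDV}})$, i.e. $\bold{q^{true}}$ lies in the image of $H$. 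Writing $\varepsilon := \|\bold{q^{CRV,integer}} - \bold{q^{CRV}}\|_2$ for the rounding error, step~3 gives $\|\bold{q} - \bold{q^{true}}\|_2 = \varepsilon$, which is small (for sum-preserving componentwise rounding, $\varepsilon \le \sqrt{R}$, independently of the magnitudes of the flows).

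Next I would exploit the optimality of the projection in step~5. Because $\bold{q^{true}}$ is a feasible competitor in the minimisation defining $\bold{q^{city}}$, we have $\|\bold{q^{city}} - \bold{q}\|_2 \le \|\bold{q^{true}} - \bold{q}\|_2 = \varepsilon$, and the triangle inequality gives $\|\bold{q^{city}} - \bold{q^{true}}\|_2 \le 2\varepsilon$. Both $\bold{q^{city}}$ and $\bold{q^{true}}$ lie in the image of $H$, and their preimages $\bold{q^{HDV,city}} = H^{-1}(\bold{q^{city}})$ and $\bold{q^{HDV}} = H^{-1}(\bold{q^{true}})$ are well-defined and single-valued by Theorem~\ref{th_inverse}. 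Applying the Lipschitz estimate \eqref{eq_LipC} of Theorem~\ref{Th_continuity} to the map $\bold{q} \mapsto \bold{q^{CRV}} = (I - H^{-1})(\bold{q})$, with constant $C := K/\big((\lambda^{CRV} - \lambda^{HDV})\rho(\nabla \bold{t})\big)$, and using $\bold{q^{HDV}} = \bold{q} - \bold{q^{CRV}}$, I obtain
\begin{equation*}
\|\bold{q^{HDV,city}} - \bold{q^{HDV}}\|_2 \le (1 + C)\,\|\bold{q^{city}} - \bold{q^{true}}\|_2 \le 2(1+C)\,\varepsilon,
\end{equation*}
so $\bold{q^{HDV,city}}$ is within a fixed multiple of the rounding error of $\bold{q^{HDV}}$, which is exactly the asserted closeness.

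The main obstacle is a matching-of-totals issue: the estimate \eqref{eq_LipC} is proved only along a fixed total-flow slice $\{|\bold{p}| = q\}$, whereas a priori the projection of step~5 ranges over the entire image of $H$, which also contains points with different total flows. To close this gap I would assume the rounding of step~3 preserves the fleet total, $|\bold{q^{CRV,integer}}| = q^{CRV}$ (always achievable by sum-preserving rounding since $q^{CRV}\in\mathbb{Z}$); then $|\bold{q}| = |\bold{q^{true}}|$, and because the city knows the fleet size it may carry out the projection over the image of $H$ intersected with the slice $\{|\bold{p}| = |\bold{q}|\}$, on which both $\bold{q^{true}}$ and $\bold{q^{city}}$ lie, so that Theorem~\ref{Th_continuity} applies verbatim. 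A secondary point is existence of the minimiser in step~5, which follows from compactness of the (nonnegative, fixed-total) HDV slice together with (upper semi-)continuity of $G$; the convexity of the feasible sets (Proposition~\ref{Prop_convexity}) and the standing hypotheses of Theorem~\ref{Th_continuity} ($\bold{t}$ twice continuously differentiable, $\nabla\bold{t}$ positive definite on the slice, $\lambda^{HDV} < \lambda^{CRV}$) are inherited throughout.
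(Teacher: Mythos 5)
Your proof is correct and follows essentially the same route as the paper's: introduce the ideal total flow $\bold{q^{HDV}}+\bold{q^{CRV}}$, use its feasibility in the step-5 projection to bound $\|\bold{q^{city}}-\bold{q}\|$ by the rounding error, apply the triangle inequality, and conclude via the Lipschitz bound of Theorem~\ref{Th_continuity}. Your additional remarks on sum-preserving rounding and restricting the projection to the fixed-total slice $\{|\bold{p}|=|\bold{q}|\}$ are a legitimate tightening of a point the paper's proof leaves implicit, but they do not change the argument.
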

\begin{proof}
\begin{eqnarray*}
\|\bold{q^{HDV,city}} - \bold{q^{HDV}}\| &=& \|H^{-1}(\bold{q^{city}}) - H^{-1}(\bold{q^{HDV}} + \bold{q^{CRV}})\|\\ &\le& \mbox {Lip}(H^{-1}) \|\bold{q^{city}} - (\bold{q^{HDV}} + \bold{q^{CRV}})\| \\
&\le& \mbox {Lip}(H^{-1}) \left(\|\bold{q^{city}} - \bold{q}\| + \| \bold{q} -  (\bold{q^{HDV}} + \bold{q^{CRV}})\|\right)\\
&\le& 2\mbox {Lip}(H^{-1}) \|\bold{q^{CRV, integer}} - \bold{q^{CRV}}\|
\end{eqnarray*}
where we used the fact that 
$\|\bold{q^{city}} - \bold{q}\| = \min_{\bold{r}} \|\bold{r} - \bold{q}\| \le \|(\bold{q^{HDV}} + \bold{q^{CRV}}) - \bold{q}\|$ as $(\bold{q^{HDV}} + \bold{q^{CRV}})$ belongs to the image of $H$ and the Lipschitz constant $\mbox {Lip}(H^{-1})$ is bounded by Theorem \ref{Th_continuity}.
\end{proof}

\begin{example}[Nonuniqueness of the discrete inverse assignment]
Let there be two equivalent routes with total flow $\bold{q} = (50,50)$ and convex delay functions. Let the fleet apply the selfish strategy. For $q^{CRV} = 19$ we have the unique $\bold{q}^{CRV} = (9.5,9.5)$ for the continuous inverse assignment. However, in the discrete case there are two solutions $\bold{q}^{CRV} = (9,10)$ and $\bold{q}^{CRV} = (10,9)$.
\end{example}
\begin{example}
As above but for the malicious strategy. The malicious strategy (be it continuous or discrete) always assigns all the vehicles to one route as concave functionals are minimized in the corners of a polygonal domain. Therefore, for an integer $\bold{q}$ the discrete inverse and the continous inverse coincide. 
\end{example}

\subsection{Image of assignment operators under various strategies}

In this subsection we discuss the image of assignment  operators (discrete) given the fleet size and strategy in some trivial cases leaving a more detailed discussion to further reserach.

\begin{example}[No fleet vehicles]
If $q^{HDV} = q$ or, equivalently, $q^{CRV} = 0$ then the image of the assignment operator $H$ consists of all states $\bold{q}$ with $|\bold{q}| = q$ as $\bold{q^{HDV}}$ can be chosen arbitrarily. 
\end{example}

\begin{example}[Only fleet vehicles]
In this case the problem for various strategies degenerates and, as long as $\lambda^{CRV}>0$ it reduces to finding the system optimum. 
\end{example}

\begin{example}[Social strategy]
If $\bold{q} = \bold{q^{SO}}$ then for every $\bold{q^{HDV}}$ such that $\bold{q^{HDV}} \le \bold{q}$ we have $H(\bold{q^{HDV}}) = \bold{q^{SO}}$. Otherwise $H(\bold{q^{HDV}})$ is not equal to system optimum. 
\end{example}

\section{Conclusions, Open Problems and Future directions}
\label{Sec_Discussion}

\noindent Fleets of cooperating vehicles such as CAVs, once they start operating in our cities on a full scale might blend in with the surroundings and become indistinguishable from other vehicles. The collective routing of these vehicles, however, might increase human drivers' or overall travel times and cause congestion or chaos. 
Detection of cooperating fleet vehicles might prove rather difficult in the real world. 
Nevertheless, in this paper we showed that the first step of the process, i.e. identification of flows of collectively routing vehicles is theoretically achievable for fleet behaviours which are more selfish than altruistic -- a range which covers most standard fleet objectives. The future work \cite{JamrozDetectionInPrep} will encompass discussion of methods of detection of individual collectively routing vehicles and efficient algorithms necesary to achieve this goal. 

\,\\
\noindent Some other issues that we left open are: 
\begin{enumerate}
\item[i)] Proving the continuity of the inverse fleet assignment operator with respect to parameters. 
\item[ii)] Fully solving the discrete inverse problem. 
\item[iii)] Characterizing the image of the forward fleet assignment operator, i.e. which total assignments $\bold{q}$ can arise when the size of fleet is given by $q^{CRV}$?   
\item[iv)] Proposing efficient exact and heuristic algorithms of fleet assignment in nonstandard (non-convex) cases.  
\item[v)] Addressing the inverse fleet assignment problem with multiple fleets or fleets applying more elaborate objectives which are not linear combinations of HDV and CRV such as multiplicative objectives or multi-day objectives based on total payoff over the considered period, see also Appendix \ref{Sec_Stackelberg} for Stackelberg routing.
\item[vi)] Considering more general transportation settings such as varying departure times of agents, stochasticity and uncertainty of fleet assignment, more realistic delay functions or simulations in open source virtual environments such as SUMO \cite{SUMO} with fleets agents applying machine learning to optimize their routing, \cite{Akman2024, Akman2} etc. 
\end{enumerate}

\section*{Acknowledgement}
This work was financed by the European Union within the Horizon Europe Framework Programme (ERC Starting Grant COeXISTENCE no. 101075838). Views and opinions expressed are however those of the authors only and do not necessarily reflect those of the European Union or the European Research Council Executive Agency. Neither the European Union nor the granting authority can be held responsible for them.

\appendix

\section{Relation to Stackelberg and Nash routing} 
\label{Sec_Stackelberg}
In this section we compare the Stackelberg and Nash routing to the myopic routing used in this paper. In {\bf Stackelberg} routing, the atomic player (leader), being able to \emph{predict} the equilibrium behaviour of the non-atomic players (followers) given its own assignment, routes the vehicles so as to optimize its objective once the equilibrium is reached. More formally, we have the following definition, compare \cite{StackelbergRoughgarden, CorreaStackelberg}. 

\begin{definition}[Stackelberg equilibrium]
Stackelberg equilibrium \cite{Stackelberg} is a subgame perfect Nash equilibrium of the sequential game where the leader first commits to a given strategy and then the follower makes a choice \emph{knowing} the commitment of the leader.
\end{definition}

\noindent In our setting of mixed fleet -- human driver system a Stackelberg equilibrium occurs when 
\begin{enumerate}
\item[i)] the fleet operator, being able to predict the induced \emph{equilibrium} flows resulting from human driver assignment, chooses its own assignment such that its objective is optimized \emph{provided} the human drivers indeed assign themselves according to the respective equilibrium. 
\item[ii)] the human drivers assign themselves according to (Wardrop, Stochastic etc.) User Equilibrium, \emph{knowing} the assignment of the fleet.
\end{enumerate}
\noindent In {\bf Nash} routing, on the other hand, the fleet controller computes and applies every day the flows originating from a Nash equilibrium of the \emph{simultaneous} game, where neither the fleet nor the HDVs (treated as a group) can improve by unilaterally changing their assignment. To be able to apply this kind of routing, the fleet controller requires, similarly as for Stackelberg, an ability to predict the equilibrium flows of HDVs. 

\noindent In contrast, the {\bf myopically} routing fleet controller considered in this paper 
\begin{itemize}
\item First {predicts} the \emph{non-equilibrium} flows of HDVs on a given day based on behavioural models of HDVs and observation of previous HDV assignments. The way this prediction works does not play any role in this paper, however, as long as it is precise.
\item Then it selects an assignment which optimizes the one-day objective given the predicted HDV assignment. 
\item HDVs indeed behave according to the model used by the fleet controller. 
\end{itemize}

\subsection{Comparison of myopic and Stackelberg/Nash routing}
\noindent Comparing the myopic, Stackelberg and Nash routing we find that, Table \ref{Tab_routings},
\begin{itemize}
\item The myopic routing can be applied without precise knowledge of (often non-unique) equilibrium human behaviour and requires only a day-to-day human behaviour prediction model, which could potentially be very simple (e.g. use the flows from the day before in settings when the flows do not vary much day-to-day). On the other hand, for Stackelberg, an equilibrium human behaviour model is necessary to compute the HDV assignment for every possible assignment of the fleet. The same applies to Nash routing. 

\item Even if the optimal Stackelberg fleet routing can be computed and it is a pure strategy, there  may exist many (Wardrop, Stochastic) User Equilibria with the given fleet flow. This may happen even when there is no fleet at all when Wardrop equilibria are non-unique. 
If the fleet controller selects the Stackelberg flows and sticks to them, the HDV flows may converge to any of these subgame Nash equilibria. In this context we note that:
\begin{itemize}
\item The time-scale of this convergence is not clear \cite{IWH}.
\item The Stackelberg routing eventual efficiency is conditional on the \emph{initial human flows}. In contrast to standard Stackelberg games setting, where the follower(s) are supposed to \emph{choose} the equilibrium behaviour, in favour or not of the leader, leading to Strong/Weak or other Stackelberg equilibria, in our setting the followers only \emph{converge} to the equilibrium flow and have little control over which flow this convergence will end up with. 
\end{itemize}
\item Similarly, for Nash routing the eventual efficiency depends on whether HDV flows converge to the Nash equilibrium flows desired by the fleet operator. 

\item The myopic routing in simple settings often converges to a state where the flows of both the fleet and HDVs stabilize, which resembles a Nash Equilibrium (see \cite{JamrozSciRep}). Indeed, the fleet controller predicts that the Human Driver assignment will not change (much) the next day, and this indeed happens, and it routes the same flows the next day. This, however, is often distinct from Stackelberg routing which may be more efficient for the fleet controller, however is \emph{not} the best response to equilibrium HDV behaviour (see e.g. \cite{Yin}).

\item The myopic routing is (can be) always based on pure strategies as the fleet controller 'moves' second and there is no added value of applying a mixed strategy even when the payoffs of two or more strategies coincide. In contrast, for Stackelberg and Nash routing mixed strategies are often indispensable. 
 

\item The Stackelberg routing may not be more efficient than myopic routing even if the HDV flows eventually converge to the most favourable equilibrium, as this routing disregards the transient periods which the controller can still take advantage of, see Remark \ref{Rem_context}. The same applies to Nash routing. 

\item The optimal Stackelberg routing and myopic routing are optimization problems and so they always exist. On the other, under the reasonable practical assumption of only pure HDV strategies the Nash equilibrium and consequently Nash routing (even for mixed fleet strategies) may not exist, see Example \ref{Ex_MixedStackelberg}. 

\item It is NP-hard to compute exactly the Stackelberg Equilibrium and simple heuristic algorithms are not very efficient \cite{Jeroslow, StackelbergRoughgarden}.  Nevertheless for 'convex' players efficient algorithms for solving this bilevel problem based on solving variational inequalities are well-known \cite{Yang2007, VanVuren}, which however do not scale well, see \cite{LiBilevel} for recent approaches.  
\item If HDVs cannot be treated as a single player then Stackelberg leadership is not necessarily beneficial \cite{VonStengel}. 
\end{itemize}

Based on the above, we conclude that myopic routing, even though in many cases it is likely not optimal long-term,  is a simple to implement greedy strategy which maximizes the immediate reward and we consider it as a baseline for studying different behaviours of fleet controllers. Whether (and when) the Stackelberg routing is better on average than myopic routing is an open question. Finally, we note that if the fleet controller was to use the Stackelberg routing, which was known to the city, the problem of identifying fleet flows would cease to be an inverse problem and would amount to \emph{computing} the Stackelberg equilibrium and no total flows of vehicles would be needed, as is in the case of myopic routing setting considered in this paper. 

\begin{table}[h!]
\label{Tab_routings}
\caption{Comparison of different kinds of fleet routing}
\centering
\begin{tabular}{|c||c|c|c|}
\hline
Fleet routing type & {\bf Myopic} & {\bf Stackelberg} & {\bf Nash} \\ 
\Xhline{3\arrayrulewidth}
\makecell{Fleet assignment \\dynamics} & \makecell{Dynamic, \\ may $\rightarrow$ Nash Eq. }& \makecell{Static \\ (Equilibrium flows)} & \makecell{Static \\ (Eq. flows)}
\\
\hline
\makecell{Human model \\ required} & Day-to-day prediction & Equilibrium prediction & Equil. pred.
\\
\hline
Existence & Always& Always& Not always 
\\
\hline
Computability & Easy in Convex Cases& Hard& Hard 
\\
\hline
\makecell{Clarity which\\assignment to choose} & Yes & No & No 
\\
\hline
\makecell{Requires mixed\\strategies} & No & Yes & Yes 
\\
\Xhline{3\arrayrulewidth}
\makecell{HDV assignment\\dynamics}& \makecell{Dynamic, \\ may $\rightarrow$ Nash Eq.}& \makecell{Dynamic, \\ may $\rightarrow$ Stack. Eq.} & \makecell{Dynamic \\may $\rightarrow$ Nash Eq.}
\\
\hline
\makecell {Uniqueness when \\ stabilized} & No & No & No 
\\
\Xhline{3\arrayrulewidth}
\makecell{Inverse theory\\applicability} & always & \makecell{Trivializes \\ amounts to computing\\ Stackelberg Equilibrium } & \makecell{Only at \\ equilibrium when \\ Nash and myopic \\ coincide}
\\
\hline
\makecell{Usefulness of \\inverse theory} &  all dynamic settings & No & Limited\\
\hline
\end{tabular}
\end{table}


\subsection{Example: Stackelberg equilibria for malicious fleet controller}


In this section we show the differences between different kinds of routing on an example of malicious fleet controller. We note that equilibrium outcomes for standard 'convex' players, i.e. system optimal player (corresponding to social fleet in our setting), Cournot-Nash player (corresponding to selfish fleet in our setting) and  User Equlibrium player (corresponding to HDVs in our setting) are discussed in detail in \cite{Yang2007}, along with algorithms computing the respective equilibria, see also \cite{VanVurenVliet} for previous results in UE - System Optimal player setting.

\begin{example}[Optimal Stackelberg routing is mixed, Nash routing does not exist]
\label{Ex_MixedStackelberg}
Let the system consist of two independent routes with the same convex delay functions $t_1=t_2$, $q^{HDV}$ human drivers minimizing their travel time (i.e. tending to Wardrop User Equilibrium) and $q^{CRV}$ fleet vehicles. Suppose for simplicity that $q^{CRV}=q^{HDV}$. 
All Stackelberg equilibria are of the form
\begin{equation}
\label{eq_stackrouting}
\bold{q^{CRV}} = 
\begin{cases}
(q^{CRV}, 0) \mbox{ with probability } p, \\
(0, q^{CRV}) \mbox{ with probability } 1-p,
\end{cases}
\end{equation}
and $\bold{q^{HDV}}$ the corresponding unique induced user-equilibrated HDV routing, where $p \in (0,1)$. It may happen that the unique equilibrium is obtained for $p=0.5$
and $q = (q^{HDV}/2,q^{HDV}/2)$, or there can be two solutions. Consequently, optimal Stackelberg malicious fleet routing is stochastic and causes massive oscillations of travel times on routes. On the other hand, considering Nash equilibria, we note that any best response (i.e. in fact myopic) fleet routing puts all fleet vehicles on one route. This implies that a Nash equilibrium with pure strategy HDV routing (which we assume) does not exist as $(q^{CRV},0)$ induces best HDV response $(0,q^{HDV})$ which in turn induces best fleet response $(0,q^{CRV})$ and so on. Consequently, the fleet cannot use Nash routing as it does not exist.     
\end{example}
\noindent We first note that for $p=1$ the corresponding induced HDV assignment is $\bold{q^{HDV}} = (0,q^{HDV})$. In comparison, it can be shown that for $p=0.5$ the corresponding $\bold{q^{HDV}} = (q^{HDV}/2,q^{HDV}/2)$ incurs higher average travel times for HDVs by convexity of $t_1,t_2$. Therefore, \eqref{eq_stackrouting} with $p=1$ or $p=0$ (by symmetry) is not an optimal Stackelberg routing. Otherwise, the claim of Example \ref{Ex_MixedStackelberg} is a consequence of the following more general result.  
\begin{proposition}
\label{Prop_boundary}
For the malicious fleet of size $q^{CRV}$, aiming to maximize total HDVs travel time, and a system with two routes and HDVs minimizing their individual travel times, the optimal Stackelberg routing is supported on strategies $(q^{CRV},0)$ and $(0,q^{CRV})$.
\end{proposition}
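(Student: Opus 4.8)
The plan is to reduce the fleet's bilevel objective to a one-dimensional variational quantity and then invoke a Jensen-type extreme-point argument. Throughout I assume, as in Example~\ref{Ex_MixedStackelberg}, that the two delay functions $t_1,t_2$ are increasing and convex, and I model the fleet's (possibly mixed) committed strategy as a probability distribution $\mu$ of the route-$1$ fleet flow $f\in[0,q^{CRV}]$ (so route~$2$ carries $q^{CRV}-f$). Since the followers observe only the commitment $\mu$, their Wardrop response is a deterministic split $(h_1,q^{HDV}-h_1)$ that equalises the \emph{expected} route travel times $\Theta_1(h_1):=\mathbb{E}_\mu[t_1(h_1+f)]$ and $\Theta_2(h_1):=\mathbb{E}_\mu[t_2((q^{HDV}-h_1)+(q^{CRV}-f))]$.

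First I would record the reduction of the objective. Because $h_1,h_2$ are deterministic, the fleet's payoff is $\mathbb{E}_\mu[T^{HDV}]=h_1\Theta_1(h_1)+h_2\Theta_2(h_1)$, and at any Wardrop equilibrium (interior or corner) this equals $q^{HDV}\,T^{eq}(\mu)$, where $T^{eq}(\mu)$ is the common equilibrium travel time. Using that $\Theta_1$ is nondecreasing and $\Theta_2$ nonincreasing in $h_1$, I would establish the variational identity
\begin{equation*}
T^{eq}(\mu)=\max_{0\le h_1\le q^{HDV}}\min\bigl(\Theta_1(h_1),\Theta_2(h_1)\bigr),
\end{equation*}
which holds uniformly across interior and boundary equilibria and conveniently encodes the follower's best response as a maximisation. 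Thus maximising the malicious objective over fleet strategies is exactly maximising $T^{eq}(\mu)$.

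The heart of the argument is an extreme-point lemma: for a convex $\phi$ on $[0,q^{CRV}]$ and any $x\in[0,q^{CRV}]$ one has $\phi(x)\le(1-x/q^{CRV})\phi(0)+(x/q^{CRV})\phi(q^{CRV})$, so taking expectations shows that, among all distributions of $f$ with a prescribed mean $\mathbb{E}_\mu[f]$, the two-point distribution $\nu$ on $\{0,q^{CRV}\}$ maximises $\mathbb{E}[\phi(f)]$. Applying this with $\phi(f)=t_1(h_1+f)$ and with $\phi(f)=t_2(h_2+q^{CRV}-f)$ (both convex in $f$, since each is a convex function composed with an affine map) yields $\Theta_1^\nu(h_1)\ge\Theta_1^\mu(h_1)$ and $\Theta_2^\nu(h_1)\ge\Theta_2^\mu(h_1)$ for every $h_1$, with the \emph{same} mean-preserving $\nu$. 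Hence $\min(\Theta_1^\nu,\Theta_2^\nu)\ge\min(\Theta_1^\mu,\Theta_2^\mu)$ pointwise, and taking the max over $h_1$ gives $T^{eq}(\nu)\ge T^{eq}(\mu)$. Therefore replacing any committed strategy by the mean-preserving two-point strategy on $\{(q^{CRV},0),(0,q^{CRV})\}$ cannot decrease the malicious payoff, so an optimal Stackelberg routing supported on these two strategies exists, which is the claim.

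I expect the main obstacle to be the careful treatment of the equilibrium concept and its shift: when $\mu$ is replaced by $\nu$ the followers re-equilibrate to a different split $h_1^\nu$, so a naive ``fix $h_1$'' comparison fails. The variational characterisation $T^{eq}=\max_{h_1}\min(\Theta_1,\Theta_2)$ is precisely what circumvents this, since a pointwise increase of both $\Theta_i$ automatically dominates after re-optimising over $h_1$; verifying that this identity is valid also at corner equilibria (all HDVs on one route) is the one place demanding a short case check, together with the standard existence/uniqueness of the Wardrop split from monotonicity of $\Theta_1-\Theta_2$. A secondary point worth stating explicitly is the follower model --- that HDVs equilibrate expected rather than realised travel times --- which is what makes the convexity of the $t_i$ bite through Jensen.
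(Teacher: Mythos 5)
Your proof is correct, but it takes a genuinely different route from the paper's. The paper fixes the HDV equilibrium split $\bold{q^{HDV}}$ induced by the original mixed strategy $p$ and constructs, via an intermediate value argument in $k$, a two-point measure $p^k=k\delta_0+(1-k)\delta_1$ under which the \emph{same} split remains a user equilibrium; it then shows the common equilibrium travel time strictly increases by comparing chord evaluations $F_1,F_2$ (which are exactly your $\Theta_i^{\nu}$ at the mean-preserving measure) against the $p^k$-values through a short either/or contradiction, and it needs a separate patch at the end for corner equilibria. You instead keep the mean $\bar\alpha$ fixed, replace $\mu$ by the mean-preserving two-point spread $\nu$ (so the followers re-equilibrate to a different split), and absorb the re-equilibration into the variational identity $T^{eq}=\max_{h_1}\min(\Theta_1,\Theta_2)$, which makes monotonicity of the equilibrium travel time under pointwise increases of $\Theta_1,\Theta_2$ automatic and covers interior and corner equilibria uniformly. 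Both arguments hinge on the same Jensen/chord inequality for convex delays and on the follower model in which HDVs equalise \emph{expected} travel times. What each buys: your version is cleaner, identifies the dominating extremal measure explicitly, and needs no case split; the paper's version, by keeping the follower response fixed and using strict convexity, yields a \emph{strict} improvement whenever $p$ charges the interior, which is what lets the paper assert that optimal Stackelberg routing must (not merely may) be supported on $\{(q^{CRV},0),(0,q^{CRV})\}$. If you want that stronger conclusion you would need to upgrade your inequality $T^{eq}(\nu)\ge T^{eq}(\mu)$ to a strict one for non-extremal $\mu$, e.g.\ by noting that strict convexity makes at least one of $\Theta_1^{\nu}>\Theta_1^{\mu}$, $\Theta_2^{\nu}>\Theta_2^{\mu}$ strict at the relevant $h_1$.
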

\begin{proof}
The most general mixed strategy of the fleet is given by $\bold{q^{CRV}} = (\alpha q^{CRV}, (1-\alpha)q^{CRV})$ with probability $p(\alpha)$, where $p(\alpha)$ is a probability measure on $[0,1]$. Let $\bold{q^{HDV}} = (q^{HDV}_1, q^{HDV}_2)$ be the corresponding induced user-equilibrated HDV assignment, which means that if both routes are used 
\begin{equation}
\label{eq_ue1}
\int_{[0,1]} t_1\left(q_1^{HDV} + \alpha q^{CRV}\right) dp(\alpha) = \int_{[0,1]} t_2\left(q_2^{HDV} + (1-\alpha) q^{CRV}\right) dp(\alpha).
\end{equation}
\noindent We claim that there exists $k \in [0,1]$ such that for $p^k:= k\delta_0 + (1-k)\delta_1$ we have 
\begin{equation}
\label{eq_uepk}
\int_{[0,1]} t_1\left(q_1^{HDV} + \alpha q^{CRV}\right) dp^k(\alpha) = \int_{[0,1]} t_2\left(q_2^{HDV} + (1-\alpha) q^{CRV}\right) dp^k(\alpha),
\end{equation}
which is equivalent to 
\begin{equation}
\label{eq_ue2}
k t_1\left(q_1^{HDV}\right) + (1-k)t_1\left(q_1^{HDV} + q^{CRV}\right) = k t_2\left(q_2^{HDV} + q^{CRV}\right) + (1-k) t_2\left(q_2^{HDV}\right).
\end{equation} 
Indeed, for $k=0$ LHS of \eqref{eq_ue2} > LHS of \eqref{eq_ue1} and for $k=1$ LHS of \eqref{eq_ue2} < LHS of \eqref{eq_ue1}. Similarly, for $k=0$ RHS of \eqref{eq_ue2} < RHS of \eqref{eq_ue1} and for $k=1$ RHS of \eqref{eq_ue2} > RHS of \eqref{eq_ue1}. By continuity, the claim is proven. Let now $\tilde{t}_r$ be a modification of $t_r$, $r=1,2$ given by 
\begin{equation*}
\tilde{t}_r(q_r) := \begin{cases}
t_r\left(q_r^{HDV}\right) + \frac {q_r - q_r^{HDV}}{q^{CRV}}\left(t_r\left(q_r^{HDV}+q^{CRV}\right) - t_r\left(q_r^{HDV}\right)\right) & \mbox{ if } q_r \in (q_r^{HDV}, q_r^{HDV} + q^{CRV}),\\
t_r(q_r) & \mbox{ otherwise.}
\end{cases}
\end{equation*} 
Note that $\tilde{t}_r(q_r) > t_r(q_r)$ for every $q_r \in (q_r^{HDV}, q_r^{HDV} + q^{CRV})$ by strict convexity of $t_r$. Using this modification, we note that 
\begin{eqnarray*}
LHS\eqref{eq_ue1} &<& \int_{[0,1]} \tilde{t}_1\left(q_1^{HDV} + \alpha q^{CRV}\right) dp(\alpha) \\ &=& \int_{[0,1]} \left(\alpha t_1\left(q_1^{HDV} + q^{CRV}\right) + (1-\alpha)t_1\left(q_1^{HDV}\right)\right) dp(\alpha) \\ &=& \bar{\alpha} t_1\left(q_1^{HDV} + q^{CRV}\right) + (1-\bar{\alpha})t_1\left(q_1^{HDV}\right) =: F_1,
\end{eqnarray*}
where $\bar{\alpha} = \int_{[0,1]} \alpha dp(\alpha)$. Similarly,
\begin{equation*}
RHS\eqref{eq_ue1} < \bar{\alpha}t_2\left(q_2^{HDV}\right) + (1-\bar{\alpha})t_2\left(q_2^{HDV} + q^{CRV}\right) =: F_2.
\end{equation*}
Now, we claim that either $F_1 \le LHS\eqref{eq_ue2}$ or  $F_2 \le RHS\eqref{eq_ue2}$. Indeed, otherwise we obtain $1-k < \bar{\alpha}$ for the former, as both $F_1$ and $LHS\eqref{eq_ue2}$ are convex combinations of the same numbers and similarly $k < 1-\bar{\alpha}$ for the latter. This contradiction lets us conclude that either $LHS\eqref{eq_ue1} < LHS\eqref{eq_ue2}$ or $RHS\eqref{eq_ue1} < RHS\eqref{eq_ue2}$. Consequently, $RHS\eqref{eq_ue2} = LHS\eqref{eq_ue2} > RHS\eqref{eq_ue1} = LHS\eqref{eq_ue1}$, which proves that for any probability measure $p$ there exists $k$ such that the user equilibrium induced by measure $p^k$ involves longer (equal) travel times on the routes. Consequently, as the fleet objective is malicious, it suffices to consider only measures supported on $(q^{CRV},0)$ and $(0,q^{CRV})$ to find optimal Stackelberg routing. The case when the user-equilibrated induced flow $\bold{q^{HDV}}$ assigns all HDVs to only one route, for which \eqref{eq_ue1} becomes a strict inequality, e.g. "$<$", without loss of generality, can be reduced to the proof above by considering $\tilde{t}_1 = t_1 + \tau$, for some $\tau>0$ such that $\int_{[0,1]} \tilde{t}_1\left(q_1^{HDV} + \alpha q^{CRV}\right) dp(\alpha) = \int_{[0,1]} t_2\left(q_2^{HDV} + (1-\alpha) q^{CRV}\right) dp(\alpha)$.  
\end{proof}

\begin{remark}
\label{Rem_context}
In the context of Example \ref{Ex_MixedStackelberg} we note that: 
\begin{enumerate}
\item[i)] Any fixed pure strategy $\bold{q^{CRV}}=(q^{CRV}_1, q^{CRV}_2)$ used by the fleet controller would result in HDV equilibrium routing $\bold{q^{HDV}} = ((q^{HDV}+q^{CRV})/2, (q^{HDV}+q^{CRV})/2) - \bold{q^{CRV}}$, with fleet objective significantly worse than for the mixed strategy \eqref{eq_stackrouting} provided the delay functions are convex as assumed. 
\item[ii)] The myopic routing would always assign all fleet vehicles to the route with more HDVs, resulting in assignment $(q^{CRV}, 0)$ when ${q^{HDV}_1} > {q^{HDV}_2}$ or $(0,q^{CRV})$ when $q_1^{HDV} < {q_2^{HDV}}$, compare \cite{JamrozSciRep} Suppl. Fig. 9,10 for a qulitatively similar case, with two \emph{different} routes, however. Then, depending on the adaptation rate of HDVs, the number of HDVs on the route with CRVs would gradually decrease over several days until it would drop below the number of HDVs on the alternative route. Then CRVs would switch to the alternative route and the process would repeat. The average objective of the fleet over many days would then be better than the average fleet objective for optimal Stackelberg routing.

\item[iii)] If the HDVs were a group \emph{System Optimum} player instead of a User Equilibrium player, the game would turn into a zero-sum game of fleet vs. HDVs, for which the min-max theorem by von Neumann \cite{vonNeumann} ensures the same equilibrium whatever the order of players. Consequently, the Stackelberg and Nash equilibria would coincide. 
\end{enumerate}
\end{remark}

\begin{corollary}
It seems that the probabilistic Stackelberg routing in Example \ref{Ex_MixedStackelberg} leads to wildly varying travel times on routes. On the contrary, the myopic routing achieves a similar (or better) effect and does not imply complete chaos on the roads. Consequently, myopic routing could be the preferred option for fleet operators to prevent detection. 
\end{corollary}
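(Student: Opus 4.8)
The plan is to read this corollary as three linked assertions and to justify each from the results already in hand: (a) that the optimal Stackelberg routing of Example \ref{Ex_MixedStackelberg} forces large day-to-day swings in route travel times, (b) that myopic routing attains a comparable or better malicious objective with only gentle, gradual fluctuations, and (c) that a fleet wishing to stay inconspicuous therefore prefers the myopic scheme. I would treat (a) and (b) quantitatively and acknowledge from the outset that (c) is a behavioural inference and hence the genuinely soft step.

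For (a) I would fix the optimal Stackelberg strategy, which by Proposition \ref{Prop_boundary} is supported on the two extreme assignments $(q^{CRV},0)$ and $(0,q^{CRV})$, say with probabilities $p$ and $1-p$, and let $\bold{q^{HDV}} = (q_1^{HDV}, q_2^{HDV})$ be the induced user-equilibrated response. As noted just before Proposition \ref{Prop_boundary}, the pure cases $p=0$ and $p=1$ are suboptimal, so $p \in (0,1)$ and both realizations recur with positive frequency. On a day when the fleet plays $(q^{CRV},0)$ the route travel times are $(t_1(q_1^{HDV}+q^{CRV}),\, t_2(q_2^{HDV}))$, while on the complementary realization they are $(t_1(q_1^{HDV}),\, t_2(q_2^{HDV}+q^{CRV}))$. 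The per-route gap between these two profiles is $t_r(q_r^{HDV}+q^{CRV}) - t_r(q_r^{HDV}) > 0$, which by strict monotonicity of $t_r$ is bounded below by $q^{CRV}$ times a positive lower bound on $t_r'$; hence each route's travel time alternates between a high and a low value, with variance of order $(q^{CRV})^2$. This is the precise sense in which the travel times vary wildly.

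For (b) I would invoke Remark \ref{Rem_context}(ii): the myopic controller deterministically loads the more congested route, so the HDV flow drifts slowly at the human adaptation rate and only occasionally switches sides, rather than jumping by a coin flip each day. Consequently the induced travel times move continuously and slowly in time -- no abrupt network-wide oscillation -- and the same remark records that the long-run average fleet objective under myopic routing in fact exceeds that of the optimal Stackelberg strategy. Thus myopic achieves a comparable-or-better malicious payoff while the amplitude of its day-to-day fluctuations is governed by the slow adaptation dynamics rather than by $q^{CRV}$.

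Finally, for (c) I would contrast (a) with (b): a city monitoring congestion sees the large, synchronized oscillations of Stackelberg routing far more readily than the gentle drift of myopic routing, so a fleet aiming not to draw scrutiny favours the latter. I would be explicit that \emph{prevent detection} here means avoiding the city's suspicion, not evading the inverse-assignment recovery of Theorem \ref{th_inverse}, which continues to apply to the (more selfish than altruistic) malicious flows. The main obstacle is exactly this last inference: making \emph{chaos} and \emph{inconspicuousness} precise enough for a rigorous comparison. I would anchor the contrast in the variance estimate of (a) versus the slow-drift bound of (b), and concede that the concluding behavioural claim remains, as the wording ``it seems'' already signals, heuristic.
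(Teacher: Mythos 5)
Your proposal is correct and follows essentially the same route as the paper, which states this corollary as an informal consequence of Example \ref{Ex_MixedStackelberg}, Proposition \ref{Prop_boundary} (optimal Stackelberg support on the two extreme assignments with $p\in(0,1)$, hence recurring jumps of size $t_r(q_r^{HDV}+q^{CRV})-t_r(q_r^{HDV})$) and Remark \ref{Rem_context}(ii) (slow drift and comparable-or-better average objective under myopic routing), without offering a formal proof. Your added variance estimate and your clarification that ``prevent detection'' means avoiding scrutiny rather than defeating Theorem \ref{th_inverse} are consistent refinements of the paper's intent.
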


\section{Proof of Proposition \ref{Prop_foil} iii}
\label{Sec_convconc}
Let us compute the second derivatives of $\Phi$ with respect to each $\alpha$. Note that the mixed derivatives vanish and hence $\Phi$ is convex iff $\partial^2_{\phi_\alpha \phi_\alpha} \Phi \ge 0$ for every $\alpha$.
\begin{eqnarray*}
\partial^2_{\phi_\alpha \phi_\alpha} \Phi = 2\lambda^{CRV} \tau' _\alpha (\eta_\alpha + \phi_\alpha) + (\lambda^{HDV}\eta_\alpha + \lambda^{CRV} \phi_\alpha) \tau'' _\alpha(\eta_\alpha + \phi_\alpha).
\end{eqnarray*}
For an {\bf affine} delay function $\tau_\alpha (q_\alpha) = 1 + q_\alpha$ we obtain
\begin{equation*}
\partial^2_{\phi_\alpha \phi_\alpha} \Phi = 2\lambda^{CRV},
\end{equation*}
and so $\Phi$ is convex as long as $\lambda^{CRV}>0$, i.e. for rational objectives. 

\noindent For a {\bf quadratic} delay function $\tau_\alpha (q_\alpha) = 1 + q_\alpha^2$ we obtain
\begin{equation*}
\partial^2_{\phi_\alpha \phi_\alpha} \Phi = 4\lambda^{CRV}(\eta_\alpha + \phi_\alpha) + 2(\lambda^{HDV} \eta_\alpha + \lambda^{CRV}\phi_\alpha)  = (4\lambda^{CRV} + 2\lambda^{HDV})\eta_{\alpha} + 6\lambda^{CRV} \phi_\alpha,
\end{equation*}
which is non-negative for any nonnegative flows $\eta_\alpha, \phi_\alpha$ iff $$\lambda^{CRV} \ge 0 \mbox{ and } \lambda^{HDV} \ge -2\lambda^{CRV}.$$ For instance, for $\lambda^{CRV} = 1$ and $\lambda^{HDV} = -1$ we obtain convexity, however for $\lambda^{CRV} = 0.25$ and $\lambda^{HDV} = -1$ there is no convexity over the whole domain.  
Note, however, that if we restrict ourselves to the feasible set $|\bold{q^{CRV}}| = q^{CRV}$ then e.g. for two routes we have $q^{CRV}_1 + q^{CRV}_2 = const$. The domain reduces to a line and considering $$\bar{\Phi}(\bold{h}, f_1) = \Phi(\bold{h}, (f_1, f-f_1)),$$ where $f$ is the total fleet flow, we obtain
\begin{eqnarray*}
\partial^2_{f_1,f_1} \bar{\Phi} = (4\lambda^{CRV} + 2\lambda^{HDV}) h + 6\lambda^{CRV} f
\end{eqnarray*} 
where $h = |\bold{h}|$. Consequently, convexity depends only on the relation of the total HDV and fleet flows for $\lambda^{HDV}<-2\lambda^{CRV}$ in the way that large enough total fleet flow makes the restriction of the objective function to the feasible set convex even though the objective function is not convex on the whole domain.  

For a {\bf general BPR} delay function 
$\tau_\alpha (q_\alpha) = \tau^0_\alpha \left(1 + d_\alpha \left(\frac {q_\alpha}{Q_\alpha}\right) ^{\gamma_\alpha} \right)$, $\gamma^\alpha \neq 1$, we obtain
\begin{eqnarray*}
C_\alpha \partial^2 _{\phi_\alpha \phi_\alpha} \Phi &=& 2\lambda^{CRV}  \gamma_\alpha (\eta_\alpha + \phi_\alpha)^{\gamma_\alpha - 1} + (\lambda^{HDV}\eta_\alpha + \lambda^{CRV} \phi_\alpha) \gamma_\alpha (\gamma_\alpha-1)(\eta_\alpha + \phi_\alpha)^{\gamma_\alpha -2} \\
&=& \gamma_\alpha (\eta_\alpha + \phi_\alpha)^{\gamma_\alpha -2} \left[  2\lambda^{CRV} (\eta_\alpha + \phi_\alpha) + (\lambda^{HDV}\eta_\alpha + \lambda^{CRV} \phi_\alpha) (\gamma_\alpha-1)\right]  \\
&=& \gamma_\alpha (\eta_\alpha + \phi_\alpha)^{\gamma_\alpha -2} \left[  \left(2\lambda^{CRV}+(\gamma_\alpha-1)\lambda^{HDV}\right)\eta_\alpha + \lambda^{CRV}\left(1 + \gamma_\alpha\right)\phi_\alpha  \right],
\end{eqnarray*}
where we denoted $C_\alpha := \tau_\alpha ^0 d_\alpha / Q^{\gamma_\alpha}_\alpha$ and the derivation is valid for $\gamma_{\alpha} > 1$. 
Therefore, $\Phi$ is convex if for every $\alpha$
\begin{equation}
\label{eq_convlambda}
\lambda^{CRV} > \frac {(1-\gamma_\alpha)}{2} \lambda^{HDV}.
\end{equation}
For instance, if all the links in the network have $\gamma_\alpha = 4$ then $\lambda^{CRV} > -0.75 \lambda^{HDV}$ guarantees convexity of $\Phi$. 
In general $\Phi$ is locally convex at $(\boldsymbol{\eta},\boldsymbol{\phi})$ if for every $\alpha$
\begin{equation*}
\phi_\alpha > \frac{2\lambda^{CRV}+(\gamma_\alpha-1)\lambda^{HDV}}{\lambda^{CRV}\left(1 + \gamma_\alpha\right)}   \eta_\alpha.
\end{equation*}
If \eqref{eq_convlambda} holds then the above inequality reduces to $\phi_\alpha > 0$. 

\section{Examples}
\label{Sec_Examples}
In this section we discuss examples of networks in which Theorem \ref{th_inverse} can and cannot be applied.
Across these examples, unless stated otherwise, we assume that the travel time via a route consisting of several links is a sum of travel times on these links, which in turn are given by increasing convex BPR functions, i.e. travel time on a section depends only on the flow on this section. This approach, although not fully realistic, is typical in transportation research, see e.g. \cite{Stabler}, and allows us to apply Proposition \ref{Prop_grad} to conclude that the assumptions of Theorem \ref{th_inverse} are satisfied provided the routes are linearly independent. 

\subsection{Two routes}
Here, we assume that the fleet vehicles are limited to two routes only, however these routes may be part of a larger urban network. The optimization according to \eqref{eq_obj} or \eqref{eq_objective} includes only the HDVs whose choice set are these routes, with all the remaining traffic treated as a fixed background. The most general two-route configuration is shown in Fig. \ref{Fig_2r}. The available routes are: 
\begin{itemize}
\item Route 1: c-a-d
\item Route 2: c-b-d
\end{itemize}
\begin{figure*}[h]
\centering
\includegraphics[scale=1.0]{"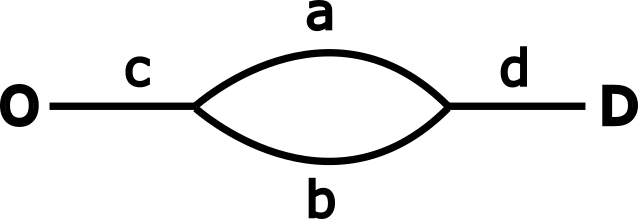"}
\caption{A two-route network.}
\label{Fig_2r}
\end{figure*}
In this setting, the first section (perhaps void) of the routes (c) is common, then the routes split (a or b), then join together and have a common final section (d), which may be void. Let $q_1$ be the flow on route 1 and $q_2$ the flow on route $2$.  
Then 
$$\bold{t}(\bold{q}) = \begin{bmatrix} t_1(q_1,q_2) \\ t_2(q_1,q_2)\end{bmatrix} =  (t_c(q_1 + q_2) + t_d(q_1 + q_2))\begin{bmatrix} 1 \\ 1\end{bmatrix} + \begin{bmatrix} t_a(q_1) \\ t_b(q_2) \end{bmatrix}$$ 
and
$$\nabla \begin{bmatrix} t_a(q_1) \\ t_b(q_2) \end{bmatrix} = 
t_a'(q_1) \begin{bmatrix}
1&0  \\
0&0 
\end{bmatrix}
+
t_b'(q_2)\begin{bmatrix}
 0&0  \\
 0&1
\end{bmatrix} 
+ 
\left(t_c'(q_1+q_2) + t_d'(q_1+q_2)
\right)\begin{bmatrix}
1 & 1 \\
1 & 1
\end{bmatrix} 
$$
where $t_a, t_b, t_c, t_d$ include the flow of other vehicles, if any, as a background flow. As routes $1$ and $2$ are linearly independent we conclude that Theorem \ref{th_inverse} holds in any system with choice between two-routes.

\subsection{Network '8'}
\label{Sec_Net8}
\begin{figure*}[h]
\centering
\includegraphics[scale=1.0]{"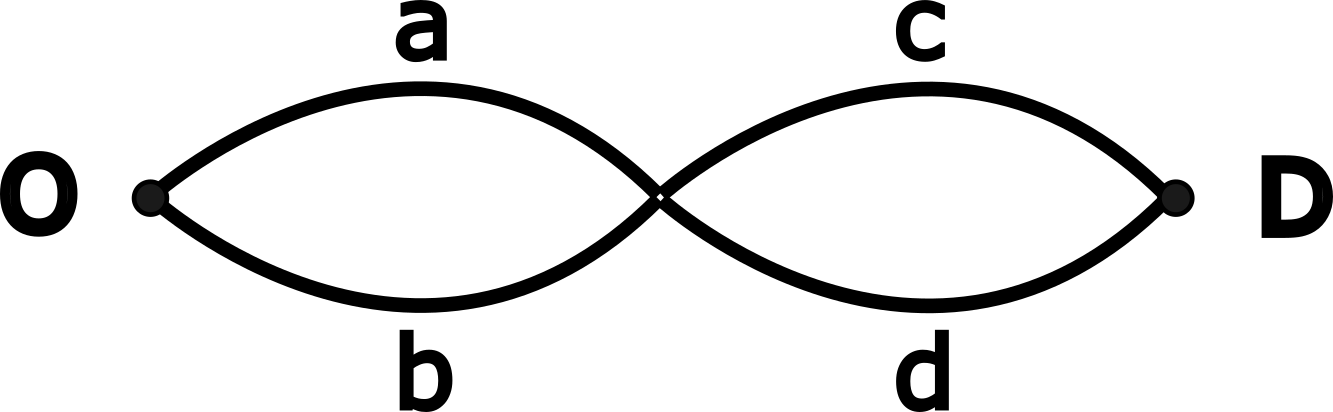"}
\caption{The '8' network with four available routes: 1: a-c, 2: a-d, 3: b-c, 4: b-d.}
\label{Fig_network8}
\end{figure*}
\noindent Network '8', the prototypical topology of dependent routes , is depicted in Fig. \ref{Fig_network8}. There are four routes, $r_1$: a-c, $r_2$: a-d, $r_3$: b-c, $r_4$: b-d, which are linearly dependent as $r_1 - r_2 - r_3 + r_4 = 0$. Indeed, increasing traffic on route $1$ by a given amount can be offset by decreasing the traffic on routes $2$ and $3$ by the same amount and increasing the traffic on route $4$ by this amount. In other words, route flow $(q_1 + \epsilon, q_2 - \epsilon, q_3 - \epsilon, q_4 + \epsilon)$ results in the same link flows as $(q_1,q_2,q_3,q_4)$ which clearly makes the inverse problem non-unique. This can be seen also by inspection of matrix $\nabla \bold{t}$, as

\begin{equation*}
\bold{t}(\bold{q}) = \begin{bmatrix}
t_a(q_1 + q_2) + t_c(q_1 + q_3)\\
t_a(q_1 + q_2) + t_d(q_2 + q_4)\\
t_b(q_3 + q_4) +  t_c(q_1 + q_3)\\
t_b(q_3 + q_4) +  t_d(q_2 + q_4)
\end{bmatrix}
\end{equation*}
\begin{eqnarray*} 
\nabla \bold{t}(\bold{q}) = 
\begin{bmatrix}
t_a' & t_a' &  &\\
t_a' & t_a' & &\\
 & & & \\
& & & 
\end{bmatrix}
+ 
\begin{bmatrix}
&  &  &\\
 &  & &\\
 & & t_b' & t_b'\\
& & t_b'&t_b' 
\end{bmatrix}
+ 
\begin{bmatrix}
t_c'&  &t_c'  &\\
 &  & &\\
t_c' & &t_c'  & \\
& & & 
\end{bmatrix}
+
\begin{bmatrix}
&  &  &\\
 & t_d' & &t_d'\\
 & & & \\
& t_d'& &t_d' 
\end{bmatrix}
=
\begin{bmatrix}
t_a' + t_c' & t_a' & t_c' &\\
t_a' & t_a' + t_d' & &t_d'\\
t_c' & & t_b' + t_c' & t_b'\\
&t_d' &t_b'& t_b' + t_d'
\end{bmatrix}
\end{eqnarray*}
and the sum of rows $1$ and $4$ of $\nabla \bold{t}$ is equal to the sum of rows $2$ and $3$. Nevertheless, Theorem \ref{th_inverselindep} ensures that CRV link flows are unique. Once the unique flows are known one can obtain the set of corresponding route flows which may consist of a single route flow or multiple route flows. To see this let us consider total OD flow $400$ and CRV flow $q^{CRV} = 100$ and equal convex link delay functions: $\tau_a = \tau_b = \tau_c = \tau_d$.  
\begin{itemize}
\item[a)] For $\bold{q} = (100,100,100,100)$ where each route receives the flow of $100$ vehicles leading to link flows $200$ on each link the unique link flow of selfish CRVs is $(a^{CRV}_a, a^{CRV}_b, a^{CRV}_c, a^{CRV}_d) = (50, 50, 50, 50)$. This link flow can result from different route flows $\bold{q}^{CRV}$ such as $(50, 0, 0, 50)$ or $(0,50,50,0)$ or $(25,25,25,25)$. 
\item[b)] For $\bold{q} = (200, 0, 0, 200)$ the resulting link flow is also $200$ on each link. The corresponding CRV link flow for selfish fleet is exactly the same as in a) however there exists only one route flow generating this link flow, i.e. $\bold{q}^{CRV} = (50,0,0,50)$.  
\end{itemize}

To conclude we note that if most routes are available then they are usually linearly dependent.  
\begin{proposition}
\label{Prop_exceeds}
If the number of routes exceeds the number of links, the routes are linearly dependent. 
\end{proposition}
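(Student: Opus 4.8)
The plan is to reduce the statement to the most elementary fact in linear algebra: in a vector space of dimension $A$, any family of more than $A$ vectors is linearly dependent. The only genuine content is to verify that the combinatorial notion of linearly dependent routes introduced above is literally this statement applied to the rows of the link-route incidence matrix $I$.

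First I would associate to each route $r$ its incidence row $\bold{e}_r := (I^{r1}, \dots, I^{rA}) \in \mathbb{R}^A$. By the definition of linear dependence of routes, the routes $r_1, \dots, r_R$ are linearly dependent iff there is a nonzero $\bold{v} = (v_1, \dots, v_R) \in \mathbb{R}^R$ with $\sum_{r=1}^R v_r I^{r\alpha} = 0$ for every link $\alpha$, which is precisely the condition $\sum_{r=1}^R v_r \bold{e}_r = \bold{0}$. Thus ``the routes are linearly dependent'' translates verbatim into ``the vectors $\bold{e}_1, \dots, \bold{e}_R$ are linearly dependent in $\mathbb{R}^A$''.

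Second, since $R > A = \dim \mathbb{R}^A$ by hypothesis, the $R$ vectors $\bold{e}_1, \dots, \bold{e}_R$ cannot be linearly independent, because a linearly independent subset of $\mathbb{R}^A$ has cardinality at most $A$. Hence a nonzero $\bold{v}$ as above exists, which is exactly the desired conclusion. Equivalently, one may argue via rank: the $R \times A$ matrix $I$ satisfies $\operatorname{rank}(I) \le \min(R, A) = A < R$, so its $R$ rows must be linearly dependent.

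I expect no real obstacle here: the result is a direct corollary of the dimension theorem, and the only point requiring care is the bookkeeping that identifies each route with its incidence row and matches the paper's definition of linear dependence of routes. It is worth remarking that the converse is false, so the hypothesis $R > A$ is sufficient but not necessary: Network~`8' of Section \ref{Sec_Net8} has $R = A = 4$ yet the routes are linearly dependent, showing that $R \le A$ does not by itself guarantee independence.
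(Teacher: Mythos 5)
Your proof is correct and is exactly the ``simple analysis of dimensions'' that the paper's one-line proof alludes to: you identify each route with its incidence row in $\mathbb{R}^A$ and invoke the fact that more than $A$ vectors in $\mathbb{R}^A$ are linearly dependent. The added remark that the converse fails (Network `8' with $R=A=4$) is a nice, accurate observation consistent with Section \ref{Sec_Net8}.
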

\begin{proof}
Simple analysis of dimensions.
\end{proof}

\subsection{Dependent link flows, example of network to which the inverse theory does not apply, and signalized intersections}
In more realistic models of traffic networks, the link flows are interdependent. Consider for instance again the network depicted in Fig. \ref{Fig_2r}. While travel times via $c$ and $d$ may perhaps be independent of all other link flows, one may expect $\tau_a$ and $\tau_b$ to depend on both the flows on $a$ and $b$.  For instance, if
\begin{eqnarray*}
\tau_a(\bold{a}) &=& (1 + a_a + \delta_1 a_b)\\
\tau_b(\bold{a}) &=& (1 + a_b + \delta_2 a_a)\\
\tau_c(\bold{a}) &=& (1 + a_c)\\
\tau_d(\bold{a}) &=& (1 + a_d)
\end{eqnarray*}
the link delay function $\boldsymbol{\tau}$ has the gradient given by 
\begin{equation*}
\begin{bmatrix}
1 & \delta_1 && \\
\delta_2 & 1 &&\\
&&1&\\
&&&1
\end{bmatrix}.
\end{equation*}
Noting that the link delay gradient matrix does not have to be symmetric, to verify positive definitedness of the matrix computed above in the sense of Definition \ref{def_posdef} it suffices to verify that its symmetric part, 
\begin{equation*}
\begin{bmatrix}
1 & \frac{\delta_1+\delta_2}{2} && \\
\frac {\delta_1+\delta_2}{2} & 1 &&\\
&&1&\\
&&&1
\end{bmatrix}
\end{equation*}
is positive definite which is the case iff $\delta_1 + \delta_2 <2$. Consequently, if e.g. $\delta_1 = 2, \delta_2 = 1$ then the inverse theory does not hold. The corresponding route-flow formulation results in
$$\bold{t}(\bold{q}) = \begin{bmatrix} t_1(q_1,q_2) \\ t_2(q_1,q_2)\end{bmatrix} =  (\tau_c(q_1 + q_2) + \tau_d(q_1 + q_2))\begin{bmatrix} 1 \\ 1\end{bmatrix} + \begin{bmatrix} \tau_a(q_1,q_2) \\ \tau_b(q_1,q_2) \end{bmatrix}$$ 
and 
$$\nabla \bold{t(q)} = 
\begin{bmatrix}
1&\delta_1  \\
\delta_2 &1 
\end{bmatrix}
+ 
\left(\tau_c'(q_1+q_2) + \tau_d'(q_1+q_2)
\right)\begin{bmatrix}
1 & 1 \\
1 & 1
\end{bmatrix}.
$$
For the only feasible direction $\bold{g} = [1, -1]^T$ the second term cancels out and we obtain
\begin{equation*}
\bold{g} \cdot \nabla \bold{t(q)}\bold{g} = 2 - \delta_1 - \delta_2,
\end{equation*} 
which again is positive if and only if $\delta_1 + \delta_2 <2$. 

More examples with dependent link flows and delay functions $\bold{t}$ that are not strictly monotone (or, equivalently, $\nabla \bold{t}$ is not positive definite) can be found in \cite{BoylesBook}. 

The final example is the classical signalized intersection model due to Webster and Cobbe \cite{Webster} for which, in a simplified rough version the delay $d(x)$ is given by (for flows not exceeding the saturation flow)
\begin{equation*}
d(x) = \frac {9} {10} \left\{\frac {c(1-\lambda)^2}{2(1-\lambda x)} + \frac {x^2}{2\lambda s x(1-x)} \right\}
\end{equation*}
for some constants $\lambda, s, c$, which is strictly increasing (and even convex, which is not necessary here) as a function of flow $x$. By independence of link delay functions, the inverse theory holds in this case.

\subsection{Two flow units controlled by CRVs}
\begin{figure*}[h]
\centering
\includegraphics[scale=1.0]{"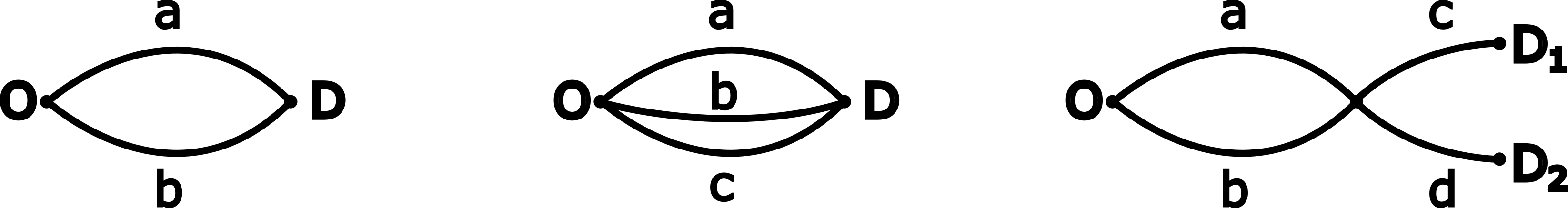"}
\caption{Left: A two route network, where link a cannot be used by heavy vehicles. Middle: a three-route network. Right: A network with two OD pairs.}
\label{Fig_2rs}
\end{figure*}
\noindent Consider the simple two-route network in Fig. \ref{Fig_2rs} left. Suppose there are two units of flow:
$u^1 = \{O, D, q^1, \{r^1_1,r^1_2\}\}$ and $u_2 = \{O, D, q^2, \{r^2_1\}\},$ where $r^1_1: a, r^1_2: b, r^2_1:b$. Unit $u^1$ may correspond to light passenger cars and $u^2$ to heavier/higher vehicles, which are not allowed to travel via $a$. Then, $\bold{q} = [\bold{q^1}, \bold{q^2}] = ({q^1_1}, {q^1_2}, {q^2_1})$,  $\bold{t} = (t_a(q^1_1), t_b(q^1_2 + q^2_1), t_b(q^1_2 + q^2_1))$ and 
\begin{equation*}
\nabla \bold{t} = \begin{bmatrix} t_a' & & \\ &t_b' &t_b' \\  &t_b'&t_b' \end{bmatrix},
\end{equation*}
which even though not positive definite, it is positive definite when restricted to feasible directions $\bold{g}$ which in this case are $\bold{g} = (\gamma, -\gamma, 0)$ as unit $2$ does not admit any variation of flow as there is only one  route available. Therefore, the inverse fleet assignment theorem, Theorem \ref{th_generalize} can be applied. 

Consider now the network in Fig. \ref{Fig_2rs}middle as before, however unit $2$ is allowed to use an additional route via $c$. A similar calculation as above yields 
\begin{equation*}
\nabla \bold{t} = \begin{bmatrix} t_a' & & &\\ &t_b' &t_b' &\\  &t_b'&t_b'& \\ &&&t_c' \end{bmatrix},
\end{equation*}
with feasible directions $\bold{g} = (\gamma^1, -\gamma^1, \gamma^2, -\gamma^2)$ which again yields $\bold{g} \cdot \bold{\nabla t} \bold{g} > 0$ for $\bold{g} \neq \bold{0}$ and satisfies the assumptions of Theorem \ref{th_generalize}.

Finally, consider an example of two OD pairs, depicted in Fig. \ref{Fig_2rs}right. 
There are two units of flow:
$u^1 = \{O, D_1, q^1, \{r^1_1,r^1_2\}\}$ and $u_2 = \{O, D_2, q^2, \{r^2_1, r^2_2\}\},$ where $r^1_1: ac, r^1_2: bc, r^2_1:ad, r^2_2:bd$. Then, $\bold{q} = [\bold{q^1}, \bold{q^2}] = ({q^1_1}, {q^1_2}, {q^2_1}, {q^2_2})$,  $\bold{t} = (t_a + t_c, t_b + t_c, t_a+t_d, t_b+t_d)$,
where $t_a = t_a(q^1_1 + q^2_1)$, $t_b = t_b(q^1_2 + q^2_2)$, $t_c=t_c(q^1_1 + q^1_2)$, $t_d = t_d(q^1_2 + q^2_2)$ and 
\begin{equation*}
\bold{\nabla t} = \begin{bmatrix}
t_a' + t_c' & t_a' & t_c' & \\
t_a' & t_a' + t_d' & & t_d' \\
t_c' & & t_b' + t_c' & t_b' \\
& t_d' & t_b' & t_b' + t_d'
\end{bmatrix}
\end{equation*}
which is identical to the matrix obtained in Section \ref{Sec_Net8} with dependent routes. In contrast to Section \ref{Sec_Net8}, however, where the routes were linearly dependent, here they are not. Indeed, a simple calculation reveals, that for a general feasible direction $\bold{g} = (\gamma^1, - \gamma^1, \gamma^2, -\gamma^2)$, we have $\bold{g} \cdot \bold{\nabla t} \bold{g} = (\gamma^1 + \gamma^2)^2 (t_c' + t_d')$ which is positive for $\bold{g} \neq 0$ and falls into the framework of Theorem \ref{th_generalize}.

\end{document}